\newcommand\sC{{\mathcal C}}
\newcommand\sL{{\mathcal L}}
\newcommand\sK{{\mathcal K}}
\newcommand\Ga{\Gamma}
\newcommand\De{\Delta}
\newcommand\ga{\gamma}
\DeclareMathOperator{\Fix}{Fix}
\DeclareMathOperator{\Hom}{Hom}
\DeclareMathOperator{\Alb}{Alb}
\DeclareMathOperator{\Jac}{Jac}
\DeclareMathOperator{\Def}{Def}
\newcommand{\CC}{\ensuremath{\mathbb{C}}}
\newcommand{\RR}{\ensuremath{\mathbb{R}}}
\newcommand{\ZZ}{\ensuremath{\mathbb{Z}}}
\newcommand{\QQ}{\ensuremath{\mathbb{Q}}}
\newcommand{\hol}{\ensuremath{\mathcal{O}}}
\newcommand{\PP}{\ensuremath{\mathbb{P}}}
\newcommand{\ra}{\ensuremath{\rightarrow}}
\def\eea{\end{eqnarray*}}
\def\bea{\begin{eqnarray*}}
\newcommand\dual{\mathrel{\raise3pt\hbox{$\underline{\mathrm{\thinspace d
\thinspace}}$}}}
\newcommand\QED{\ifhmode\unskip\nobreak\fi\quad {\rm Q.E.D.}} 
\newcommand\qe{\ifhmode\unskip\nobreak\fi\quad $\Box$}       
\newcommand\onto{\twoheadrightarrow}
\def\BOX{\hfill\lower.5\baselineskip\hbox{$\Box$}}
\newtheorem{theo}{Theorem}[section]
\newtheorem{remarkk}[theo]{Remark}
\newenvironment{rem}{\begin{remarkk}\rm}{\end{remarkk}}
\newtheorem{defin}[theo]{Definition}
\newenvironment{definition}{\begin{defin}\rm}{\end{defin}}
\newtheorem{prop}[theo] {Proposition}
\newtheorem{cor}[theo]{Corollary}
\newtheorem{lemma}[theo]{Lemma}
\newtheorem{example}[theo]{Example}
\newtheorem{claim}[theo]{Claim}
\DeclareMathOperator{\Aut}{Aut}
\DeclareMathOperator{\Aff}{Aff}
\DeclareMathOperator{\im}{Im}
\DeclareMathOperator{\Tors}{Tors}
\DeclareMathOperator{\Torsion}{Torsion}
\DeclareMathOperator{\Irr}{Irr}
\DeclareMathOperator{\GL}{GL}
\newcommand{\Proof}{{\it Proof. }}
\begin{document}

\title[Inoue type  manifolds and surfaces  ]{Inoue type manifolds and Inoue surfaces: 
  a connected component of the moduli space of surfaces  with $K^2 = 7$,
$p_g=0$.}
\author{I. Bauer, F. Catanese}

\thanks{The present work took place in the realm of the DFG
Forschergruppe 790 "Classification of algebraic surfaces and
compact complex manifolds".}

\date{\today}

\maketitle

{\bf Dedicated to Gerard  van der Geer on the occasion of his 60-th birthday.\\
Dedicato a Gherardo in occasione del 60-esimo genetliaco.\\
Gerard  zu seinem 60. Geburtstag gewidmet.}

 \bigskip

   {\bf ABSTRACT.}
    We show that a family of minimal surfaces of general type with   $p_g = 0, K^2=7$,
    constructed by Inoue in 1994,    is indeed a connected component of the moduli space:
    indeed that any surface which is homotopically equivalent to an Inoue surface belongs to the Inoue family.

    The ideas used in order to show this result motivate us to give a new definition of varieties,
    which we propose to call Inoue-type manifolds: these are obtained as quotients 
    $ \hat{X} / G$, where $ \hat{X} $ is an ample divisor in a $K(\Ga, 1)$
    projective manifold $Z$, and $G$ is a finite group acting freely on $ \hat{X} $.
    For these type of manifolds we prove a similar  theorem to the above, even if weaker,  that manifolds 
    homotopically equivalent to  Inoue-type manifolds are again  Inoue-type manifolds.
    
     \bigskip

   {\bf SUNTO.}
  Lo scopo di questo lavoro e' duplice: da una parte vogliamo qui  mostrare che una famiglia di superficie minimali di tipo   generale con genere geometrico nullo, e genere lineare $p_1 = 8$,  
    costruite dal signor  Inoue nel 1994,   formano una componente connessa dello spazio dei  moduli.
    Anzi, piu' precisamente, mostriamo che ogni 
   superficie omotopicamente  equivalente ad una  superficie  di Inoue appartiene alla suddetta famiglia.

    Le  idee su cui si appoggiano le tecniche dimostrative sono di carattere assai generale e ci inducono 
    a proporre come oggetto di studio una classe di variet\'a proiettive che vogliamo qui chiamare variet\'a di tipo Inoue.
    
    Queste variet\'a vengono definite come quozienti $ \hat{X} / G$ (per la azione di un gruppo finito $G$
    che agisca liberamente su $ \hat{X}$), dove 
 $ \hat{X} $ \'e un  divisore ampio in una variet\'a proiettiva $Z$ che sia uno spazio di Eilenberg MacLane  $K(\Ga, 1)$.
 Per queste variet\'a siamo in grado di mostrare un teorema analogo al precedente, anche se  piu' debole,
 che stabilisce che variet\'a
  omotopicamente  equivalenti a variet\'a di tipo Inoue sono anche esse variet\'a di tipo Inoue.
 
 \bigskip

\section*{Introduction}Minimal surfaces of general type with $p_g(S)
= 0$ have invariants
$p_g(S)  =  q (S)= 0, 1 \leq K_S^2  \leq 9$, and, for each value of $
y \in \{1,2, \dots, 9\}$, such surfaces
with $ K_S^2 = y$  yield  a finite number
of irreducible components of the Gieseker moduli space of surfaces of
general type $\mathfrak M^{can}_{1,y}$.

After the first surfaces of general type with $p_g = q = 0$ were
constructed in the 1930' s by   Luigi Campedelli and by Lucien
Godeaux (cf. \cite{Cam},
\cite{god}) there was  in the 1970's a big revival of interest in the
construction of these surfaces,
as documented by a long and influential survey paper  written by
Dolgachev (\cite{Dolgachev}). 

The Bloch conjecture and
differential topological questions raised by Donaldson Theory were a
further reason for the remarkable and ongoing  interest
about surfaces of general type with
$p_g = 0$, and  we refer to
\cite{survey} for an update  about   recent  important progress on
the topic, and about  the state of the art.

Looking at the tables 1--3 of \cite{survey} one finds  it striking
that for the value $K_S^2= 7$ there is only one  known family of
such surfaces of general type. This family was constructed by Inoue
(cf. \cite{inoue}).
Further interest concerning this family comes from the problem raised
in \cite{survey}, and concerning the finiteness or not
of the fundamental group of  surfaces with $p_g = 0$. Indeed this paper was
motivated by the observation that Inoue's surfaces have a
``big'' fundamental group. In fact, the fundamental group of an Inoue
surface with $p_g = 0$ and $K_S^2 =7$ sits
in an extension ($\pi_g$ denotes the fundamental group of a compact
curve of genus $g$):
$$ 1 \rightarrow \pi_{5} \times \mathbb{Z}^4 \rightarrow \pi_1(S)
\rightarrow (\mathbb{Z}/2\mathbb{Z})^5 \rightarrow 1.
$$

This extension is given geometrically, i.e., stems from our observation that
an Inoue surface $S$ admits an unramified
$(\mathbb{Z}/2\mathbb{Z})^5$ - Galois covering $\hat{S}$ which is an
ample divisor in $E_1 \times E_2 \times D$, where
$E_1, E_2$ are elliptic curves and $D$ is a compact curve of genus
$5$; this made us believe
   that the topological type of an Inoue surface determines an irreducible
connected component of the moduli space (a phenomenon similar to the
one which was already observed   in \cite{keumnaie},
\cite{burniat1} \cite{coughlanchan}).

The following is  one of the main  results of this paper:

\begin{theo}\label{main}\
\begin{enumerate}
\item Let $S'$ be a smooth complex projective surface which is
homotopically equivalent to an Inoue surface (with $K^2 = 7$ and
$p_g=0$). Then $S'$ is an Inoue  surface.
\item The connected component of the Gieseker moduli space
$\mathfrak M^{can}_{1, 7}$ corresponding to Inoue surfaces is
irreducible, generically smooth, normal and unirational  of dimension  4.

Moreover, each Inoue surface $S$ has ample canonical divisor\footnote{This is proven by Inoue, 
see page 318 of \cite{inoue}.} and the base $\Def (S)$  of the Kuranishi family of $S$  is smooth.
\item
Finally, the first homology group of an Inoue surface equals $$\ZZ/4 \ZZ \oplus (\ZZ/2 \ZZ)^4.$$

\end{enumerate}
\end{theo}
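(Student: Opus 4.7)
Given a homotopy equivalence $S' \simeq S$ with $S$ an Inoue surface, the induced isomorphism on fundamental groups transports the extension
$$1 \to \pi_5 \times \ZZ^4 \to \pi_1(S') \to (\ZZ/2\ZZ)^5 \to 1$$
to $S'$, producing an unramified Galois cover $\hat S' \to S'$ with $\pi_1(\hat S') = \pi_5 \times \ZZ^4$. Since $c_2 = e(S)$ and the signature $\sigma(S)$ are homotopy invariants and $K^2 = 3\sigma + 2c_2$, the numerical invariants of $S'$ and $\hat S'$ coincide with those of $S$ and $\hat S$; in particular $\hat S'$ has $q = 7$, $\chi(\Oh)=32$, and $K^2 = 224$. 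The plan is to realize $\hat S'$ as a $(\ZZ/2\ZZ)^5$-equivariant ample divisor in $Z' := D' \times E_1' \times E_2'$, with $D'$ a smooth curve of genus $5$ and $E_i'$ elliptic curves, and then descend the quotient to identify $S'$ with a point of the Inoue family.

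\textbf{Construction of the map and main obstacle.} The surjection $\pi_1(\hat S') \onto \pi_5$ yields, by the Siu--Catanese theorem on irrational pencils, a holomorphic fibration $f : \hat S' \to D'$ onto a smooth curve of genus $5$. The induced map $\Alb(\hat S') \to \Jac(D')$ has a two-dimensional connected kernel $A$, through which the residual part of the Albanese map factors as $g : \hat S' \to A$. Equivariance under the Galois $(\ZZ/2\ZZ)^5$-action, whose action on $\pi_1^{ab} \otimes \QQ$ agrees with the one for $S$ by the homotopy equivalence, splits the $\ZZ^4$-summand as a direct sum of two rank-$2$ sub-representations; this yields an isogeny $A \sim E_1' \times E_2'$, refined to an honest product after adjusting by a finite equivariant correction. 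The principal obstacle is to show that $(f,g) : \hat S' \to Z'$ is a closed embedding realizing $\hat S'$ as an ample divisor in the same numerical class as the Inoue divisor $\hat S \subset E_1 \times E_2 \times D$. The class is pinned down from $K_{\hat S'}^2 = 224$ and $\chi(\Oh_{\hat S'}) = 32$ via adjunction on $Z'$; surjectivity onto a divisor is established by bounding the image dimension through Euler-characteristic comparisons; and injectivity together with properness are forced by the $(\ZZ/2\ZZ)^5$-equivariance, since contraction of any curve would produce a quotient $\hat S'/(\ZZ/2\ZZ)^5 = S'$ with invariants incompatible with $K^2=7$. Once $\hat S' \into Z'$ is realized as such an ample divisor, $S'$ is by construction an Inoue surface.

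\textbf{Parts (2) and (3).} With (1) in hand, smoothness of $\Def(S)$ follows from Akizuki--Kodaira--Nakano vanishing applied to the ample line bundle $\Oh_Z(\hat S)$ together with the adjunction sequence, yielding $H^2(\hat S, T_{\hat S}) = 0$; the free $(\ZZ/2\ZZ)^5$-action transports unobstructed equivariant deformations of $\hat S$ in $Z$ down to $\Def(S)$, whence generic smoothness, normality, and irreducibility of the corresponding component. The dimension count gives $4$: one modulus for each elliptic factor $E_i$, plus two further intrinsic moduli from the choice of $D$ and the equivariant divisor class. Unirationality follows from the explicit presentation of the Inoue family as the quotient of an open subset of the projective linear system $|\Oh_Z(\hat S)|^{(\ZZ/2\ZZ)^5}$ by an algebraic group action. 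For (3), $H_1(S, \ZZ) = \pi_1(S)^{ab}$ is computed directly from the extension using the five-term exact sequence in group homology: one identifies the $(\ZZ/2\ZZ)^5$-coinvariants of $H_1(\pi_5 \times \ZZ^4, \ZZ) = \ZZ^{14}$ under the prescribed action, combines with the extension class in $H^2((\ZZ/2\ZZ)^5, -)$, and obtains $\ZZ/4\ZZ \oplus (\ZZ/2\ZZ)^4$.
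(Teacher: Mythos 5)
Your reduction of part (1) to a holomorphic map $\hat S' \to E_1'\times E_2'\times D'$ (the genus-$5$ fibration from the surjection onto $\pi_5$, and the splitting of the Albanese via the $G$-action on $\ZZ^4\otimes\QQ$) is essentially the paper's Step 1, and is fine in outline. But the two points you yourself flag as ``the principal obstacle'' are exactly where the substance lies, and your arguments for them do not work. First, degree one onto the image: since the Inoue class $2F_1+2F_2+4F_3$ is divisible, $f_*[\hat S']=d\,[W']$ leaves open the possibility $d=2$ with $[W']=F_1+F_2+2F_3$; this is not a curve-contraction phenomenon, so the claim that ``contraction of any curve would produce a quotient with incompatible invariants'' does not address it, and pinning down the class numerically from $K^2=224$ and $\chi=32$ already presupposes $d=1$. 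The paper excludes $d=2$ by first showing (Lemma \ref{affine}) that the affine $G$-action on $Z'$ is the one of an Inoue surface and then verifying that no effective $G$-invariant divisor of numerical type $(1,1,2)$ exists. Second, and more seriously, being a $G$-invariant ample divisor of class $(2,2,4)$ is not the definition of an Inoue surface: you must still show the divisor is cut out by Inoue's equations, i.e.\ lies in the Inoue family. The paper does this by decomposing $H^0(\hol_{Z'}(2F_1+2F_2+4F_3))$ into character spaces for $H=\langle g_1,\dots,g_4\rangle$, obtaining four pencils, normalizing (possibly replacing $a_i$ by $1/a_i$) so that the relevant invariant pencil is $\{\sL_1\sL_2\sL_3=c\}$, and then observing that $g_5$ acts on this $\PP^1$ with exactly two fixed points, which are Inoue's divisors $c=\pm b_1b_2b_3$. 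None of this appears in your proposal, so the concluding sentence ``once realized as such a divisor, $S'$ is by construction an Inoue surface'' is a genuine gap.

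For part (2) the key claim is false: $H^2(\hat S,\Theta_{\hat S})\neq 0$, since $\chi(\Theta_{\hat S})=2K_{\hat S}^2-10\chi(\Oh_{\hat S})=448-320=128>0$ and $h^0(\Theta_{\hat S})=0$ force $h^2(\Theta_{\hat S})\ge 128$; likewise the paper computes $h^2(S,\Theta_S)=8\neq 0$, so no Akizuki--Nakano-type vanishing can give unobstructedness. Smoothness of $\Def(S)$ is obtained in the paper by a completely different route: the Mendes Lopes--Pardini realization of $S$ as a $(\ZZ/2\ZZ)^2$-bidouble cover of the four-nodal cubic, logarithmic $1$-form computations giving $h^1(S,\Theta_S)=h^1(S,\Theta_S)^{inv}=4$, and the observation that the explicit $4$-dimensional rational family (parametrized by an open set of $(\PP^1)^4$) then fills out the Kuranishi space, which is therefore smooth even though obstructions do not vanish; normality of the moduli component comes from dividing a smooth base by the finite automorphism group. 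Your outline for (3) (abelianizing the extension using the $G$-action on $\ZZ^4\times H_1(D,\ZZ)$ and the extension data) is the same idea as the paper's proof, but it only asserts the answer: the actual content is the explicit commutators $[\gamma_i,\gamma_j]$, the fact that $g_1$ acts trivially on $H_1(D,\ZZ)$, the relation $g_5^2=(\tau_1,\dots,\tau_4)$ producing the $\ZZ/4\ZZ$ factor, and the orbifold sequence bounding $|H_1(S,\ZZ)|$ by $2^6$.
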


Since this theorem is similar in flavour to other results that we 
mentioned above,
the main purpose of this paper is not only to give a more general proof, but also to  set up the
stage for  the investigation and search for a new class of varieties, 
which we propose to call
Inoue-type varieties.

\begin{defin}
We define a complex projective manifold $X$ to be an {\bf Inoue-type manifold} if
\begin{enumerate}
\item
$ dim (X) \geq 2$;
\item
there is a finite group $G$ and a Galois unramified covering $
\hat{X} \ra X$ with group $G$,
(hence $ X = \hat{X} / G$) so that
\item
$ \hat{X}$ is an ample divisor inside a $K(\Ga, 1)$-projective manifold
$Z$ (hence by Lefschetz $\pi_1 ( \hat{X}) \cong \pi_1 (Z)
\cong \Ga$) and moreover
\item
the action of $G$ on $ \hat{X}$   yields a faithful action on $\pi_1
( \hat{X}) \cong \Ga$:
in other words the exact sequence
$$ 1 \ra \Ga  \cong \pi_1 ( \hat{X}) \ra \pi_1 ( X) \ra G \ra 1$$
gives an injection $ G \ra Out (\Ga)$, defined by conjugation;
\item
the action of $G$ on $ \hat{X}$   is induced by an action on $Z$.
\end{enumerate}
Similarly one defines the notion of an  {\bf Inoue-type variety}, by
requiring the same properties
for a variety $X$ with canonical singularities.
\end{defin}

We should warn the reader that our approach was inspired by, but is different from the
original construction of Inoue, who considers
hypersurfaces and complete intersections (of not necessarily ample
divisors) in a product of elliptic curves.
In fact, the change of point of view in the particular case  of  an Inoue surface with 
$K^2 = 7, p_g = 0$ produces a different realization: instead of Inoue's   original
realization as
 a complete intersection of two hypersurfaces of multidegrees $(2,2,2,0)$ and
$(0,0,2,2)$ in a product of 4 elliptic curves,
  we view the same surface as a hypersurface of multidegrees
$(2,2,4)$ in the product
$E_1 \times E_2 \times D$ of two elliptic curves with a curve $D$ of genus $5$.

One can see that our definition, although imposing a strong restriction on $X$, 
is not yet satisfactory in order to obtain some
weak rigidity result (of the type of theorems 4.13 and 4.14 of 
\cite{isogenous}, amended in \cite{annals}, theorem 1.3).
Some hypotheses must be made on the fundamental group $\Ga$ of $Z$,
for instance the most interesting case
is the one where $Z$ is a product of Abelian varieties, curves, and other locally symmetric varieties with
ample canonical bundle.

\begin{defin}\label{SIT}
   We shall say  that an  Inoue-type manifold $X$  is 

\begin{enumerate}
\item
  a {\bf SIT : = special Inoue type manifold}
if  moreover
$$ Z = (A_1 \times \dots \times A_r) \times  (C_1 \times \dots \times
C_h) \times (M_1 \times \dots \times
M_s)$$ where each
$A_i$ is an Abelian variety,  each $C_j$ is a curve of genus $ g_j \geq 2$,
and $M_i$ is  a
compact quotient of an irreducible bounded symmetric domain of 
dimension at least 2 by a
torsion free subgroup.
\item
  a {\bf CIT : = 
classical Inoue type manifold}
if  moreover

$ Z = (A_1 \times \dots \times A_r) \times  (C_1 \times \dots \times
C_h) $ where each
$A_i$ is an Abelian variety,  each $C_j$ is a curve of genus $ g_j \geq 2$.
\item
a special Inoue type manifold  is said to be
a 

{\bf diagonal SIT manifold : = diagonal
special   Inoue type manifold }
if  moreover:
\begin{itemize}
\item the action of $G$ on $ \hat{X}$   is induced by a  diagonal 
action on $Z$, i.e.,

$$
(I) \ \  G \subset    \prod_{i=1}^r \Aut(A_i)
\times
\prod_{j=1}^h\Aut(C_j) \times \prod_{l=1}^s\Aut(M_l)
$$

\item
and furthermore:

(II) 
the  faithful action on $\pi_1 ( \hat{X}) \cong \Ga$,
induced by conjugation by lifts of elements of $G$ in  the exact sequence
\begin{equation}\label{prodFG}
 1 \ra \Ga= \Pi_{i=1}^r (\Lambda_i) \times   \Pi_{j=1}^h (\pi_{g_j}) 
\times \Pi_{l=1}^s
(\pi_1 (M_l))
\ra \pi_1 ( X) \ra G \ra 1
\end{equation}
(observe that each factor $\Lambda_i$, resp. $\pi_{g_j}, \pi_1 (M_l)$ is 
normal), has the Schur property
$$(SP)  \Hom (V_i, V_j)^{G} = 0,\forall i \neq j,   $$
where $V_j : = \Lambda_j\otimes \QQ$ (it  suffices then to verify  that 
for each $\Lambda_i$ there is a subgroup $H_i$ of $G$ for which $ \Hom (V_i, V_j)^{H_i} = 0, \forall j \neq i  $).
\end{itemize}
\item
similarly we define a {\bf diagonal CIT manifold : = diagonal
classical  Inoue type manifold }

\end{enumerate}
We can define analogous  notions for    Inoue  type varieties
  $X$ with canonical singularities.
\end{defin}

Property (SP) plays an important role in order to show that an
Abelian variety  with such a $G$-action on its fundamental group
must split as a product.

There is however  a  big difference between the curve and locally symmetric
factors  on one side and the
Abelian variety factors on the other.  Namely: for curves we have weak rigidity, i.e., 
the action of $G$ on $\pi_{g_j}$ determines a connected family of
curves; for compact free quotients of bounded symmetric domains of 
dimension $\geq 2$ we have strong rigidity, i.e., 
the action on the fundamental group determines uniquely the 
holomorphic action; 
for Abelian varieties it is not necessarily so.

Hence, in order to hope for weak rigidity results, one has to 
introduce  a further invariant, called Hodge type (see section 1).

We can now state our main general result:

\begin{theo}\label{special diagonal}\
  Let $X$ be a diagonal SIT (special  Inoue type) manifold, and let $X'$ be a
projective manifold
with the same fundamental group as $X$, which moreover either

(1)
is homotopically equivalent to $X$;

  or satisfies the following weaker property:

(2) let $\hat{X'}$ be the corresponding unramified covering of $X'$ and
identify both
fundamental groups $\pi_1 (\hat{X'}) = \pi_1 (\hat{X})= \Ga $ . Then 
we require that

 {\bf (SAME HOMOLOGY)} there is an isomorphism $\Psi : H_*(\hat{X'}, \ZZ) \cong
H_*(\hat{X}, \ZZ)  $
of homology groups which is compatible with the homomorphisms
$$ u \colon H_*(\hat{X}, \ZZ)  \ra   H_*(\Ga , \ZZ)   ,  u'   \colon
 H_*(\hat{X'}, \ZZ)  \ra   H_*(\Ga , \ZZ)  ,   $$
i.e., $u \circ \Psi = u'$.

Under these assumptions, we have, setting $ W : = \hat{X'}$, that

\begin{itemize}
\item
$X'=  W / G$ where $W$ admits a generically finite
morphism 
$f : W \ra Z'$, and where
\item
$Z'$ is also a $K(\Ga, 1)$ projective manifold,
of the form $ Z' = (A'_1 \times \dots \times A'_r) \times
(C'_1 \times \dots \times  C'_h) \times (M'_1 \times \dots \times
M'_s)$.

Moreover here $M'_i$ is either $M_i$ or its complex conjugate,
and the product decomposition corresponds to the product decomposition
(\ref{prodFG}) of the fundamental group of $Z$.
\item
The  image cohomology class  $f_*([W])$ corresponds, up to sign,
to the cohomology class of $\hat{X}$.
\item
The morphism $f$ is
finite if $n = dim X$ is odd, and moreover it is
generically injective if 

(**) the cohomology class of $\hat{X}$ is indivisble,
or if every strictly submultiple  cohomology class cannot be represented by an effective
$G$-invariant divisor  on any pair  $(Z', G)$ homotopically equivalent to $(Z,G)$.
\item
 $f$ is an embedding if moreover $K_{X'}$ is ample and

(***)  $\ \ \ K_{X'}^n = K_{X}^n$.\footnote{ This last property for
algebraic surfaces follows automatically from homotopy invariance.}
\end{itemize}
In particular, if $K_{X'}$ is ample and (**) and (***) hold, also $X'$ is a diagonal SIT (special  Inoue type) manifold.

A similar  conclusion holds under the alternative assumption that  the isomorphism of homology groups sends the canonical class of $W$
to the one of $\hat{X}$: then  $X'$ is a diagonal SIT (special  Inoue type) variety.

\end{theo}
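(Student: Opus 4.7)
The strategy is to build, by separately handling each type of factor of $Z$, a $G$-equivariant morphism $f : W = \hat{X'} \to Z'$ where $Z'$ is a product of the prescribed form, and then to use the homology/homotopy hypothesis together with a cohomology class comparison to promote $f$ to a generically injective morphism, and finally (under (\ref{special diagonal}.**) and (\ref{special diagonal}.***)) to an embedding. Since $\pi_1(W) = \Ga$ splits as the product (\ref{prodFG}), the classifying map $W \to K(\Ga,1)$ provides, up to homotopy, projections $W \to K(\Lambda_i,1)$, $W \to K(\pi_{g_j},1)$ and $W \to K(\pi_1(M_l),1)$; the heart of the argument is to realize each of these, up to homotopy, by an algebraic map onto an object of the correct type.

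\emph{Step 1 (Abelian factors).} I first analyze the Albanese variety $\Alb(W)$. Its fundamental group is the torsion-free quotient of $H_1(W,\ZZ) = \Ga^{\mathrm{ab}}$, and the diagonal $G$-action on $\Ga$ descends to a $G$-action on $\Alb(W)$. The Schur property $(SP)$ applied to the rational $G$-representations $V_i = \Lambda_i \otimes \QQ$ forces $\Alb(W) \otimes \QQ$ to split, as a $G$-representation, according to the summands coming from $\Lambda_i$ and from $H_1(C_j)$, $H_1(M_l)$. A standard argument (going back to Poincaré's complete reducibility, together with the fact that any $G$-invariant idempotent in $\End(\Alb(W)) \otimes \QQ$ clears denominators to a genuine isogeny-decomposition) then produces abelian varieties $A'_1,\dots,A'_r$ with $\pi_1(A'_i) \cong \Lambda_i$ as $G$-modules, and compatible morphisms $W \to A'_i$.

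\emph{Step 2 (Curve factors).} For each $C_j$ the composition $\pi_1(W) \to \pi_{g_j}$ is surjective with finitely generated kernel; by the Siu--Beauville--Catanese criterion (the Castelnuovo--de Franchis theorem in the form of \cite{isogenous}), there exists a surjective holomorphic map $W \to C'_j$ to a smooth curve $C'_j$ of genus $g_j$ inducing (up to conjugation) the prescribed projection on $\pi_1$. $G$-equivariance is enforced by averaging over the finite group $G$ and using the rigidity of the target.

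\emph{Step 3 (Locally symmetric factors).} For each $M_l$ of dimension $\geq 2$, Siu's theorem on strong rigidity of harmonic maps (in its Kähler form) together with Carlson--Toledo produces a holomorphic or anti-holomorphic map $W \to M'_l$ realizing the given projection on $\pi_1$; this forces $M'_l$ to be either $M_l$ or its complex conjugate. Strong rigidity here gives $G$-equivariance automatically.

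\emph{Step 4 (Assembly and cohomology class).} The maps of Steps~1--3 combine into a $G$-equivariant morphism $f : W \to Z' := \prod A'_i \times \prod C'_j \times \prod M'_l$, where $Z'$ is itself a $K(\Ga,1)$. Compatibility with the homology or homotopy isomorphism $\Psi$, together with the fact that both $W \to Z'$ and $\hat X \to Z$ classify the same group $\Ga$, identifies $f_*[W]$ up to sign with the class of $\hat X$ inside $H_{2n}(Z',\ZZ) = H_{2n}(Z,\ZZ)$.

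\emph{Step 5 (Generic injectivity and embedding).} Since $\hat X$ is an ample divisor in a $K(\Ga,1)$ of dimension $n+1$, its class is of dimension $n$; when $n$ is odd, generic fibers of $f$ must be zero-dimensional, so $f$ is finite. Under (\ref{special diagonal}.**) the class of $\hat X$ is primitive (or no $G$-invariant effective divisor represents a submultiple), so the degree of $f$ must be $1$, i.e., $f$ is generically injective. Finally, under ampleness of $K_{X'}$ and $(\ref{special diagonal}.***)$ the equality $K_{X'}^n = K_X^n$ forces the generically injective finite $f$ to be an isomorphism onto its image, hence an embedding; equivalently, if only the canonical class is identified, one concludes that the image of $f$ is the canonical model of $X'$, which is an Inoue-type variety with canonical singularities.

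The main technical obstacle is Step~1: one must ensure that the $G$-module decomposition of $\Lambda \otimes \QQ$ provided by $(SP)$ lifts to an actual splitting of $\Alb(W)$ by abelian subvarieties whose period lattices realize the $\Lambda_i$ integrally. A secondary difficulty lies in Step~5, where the cohomological identification of $f_*[W]$ with $[\hat X]$ must be shown to rule out all strictly multiple classes; this is exactly where hypothesis (\ref{special diagonal}.**) intervenes.
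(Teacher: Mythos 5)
Your plan reproduces the paper's architecture (Albanese variety for the abelian part, the Siu--Beauville--Catanese criterion for the curve factors, Siu's strong rigidity for the $M_l$, then identification of $f_*[W]$ with $\pm[\hat{X}]$ via (SAME HOMOLOGY), degree one under (**), and the $K^n$ comparison under (***)), but two of your steps have genuine gaps, and the first is precisely the point where the paper argues differently. In Step 1 you invoke Poincar\'e complete reducibility and $G$-invariant idempotents in $\mathrm{End}(\Alb(W))\otimes \QQ$; this can only give a decomposition up to isogeny, and you yourself flag the passage to an integral splitting with period lattices equal to the $\Lambda_i$ as the ``main technical obstacle'' without resolving it. The paper needs no isogeny argument at all: the splitting $\Alb(W)=A'\times B'$ comes from the universal property of the Albanese map (the maps to $\Alb(Z_2')\times\Alb(Z_3')$ split off the lattice $H_1(B',\ZZ)$), and then, since $G$ acts holomorphically on $A'$, multiplication by $\sqrt{-1}$ is a $G$-equivariant endomorphism of $\Lambda\otimes\RR=\oplus_i(\Lambda_i\otimes\RR)$; the Schur property (SP) kills its off-diagonal blocks, so each $\Lambda_i\otimes\RR$ is a complex subspace, $A'_i:=(\Lambda_i\otimes\RR)/\Lambda_i$ is a subtorus, and $A'=\prod_i A'_i$ splits on the nose because $\Lambda=\oplus_i\Lambda_i$ already splits integrally. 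So the obstacle you list simply does not arise once (SP) is applied to the complex structure rather than to a projector.

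In Step 5 the embedding claim is asserted, not proved: generic injectivity plus $K_{X'}^n=K_X^n$ does not by itself force an isomorphism onto the image. The paper's mechanism is adjunction: via Stein factorization, the conductor of the normalization of $W'=f(W)$, and the negativity lemma (Corti, Koll\'ar--Mori) one writes $K_W=f^*(K_{Z'}+W')-\mathfrak{A}$ with $\mathfrak{A}$ effective; since $f^*(K_{Z'}+W')$ corresponds under (SAME HOMOLOGY) to $K_{\hat{X}}$, one gets $K_{\hat{X}}^n=(K_W+\mathfrak{A})^n\geq K_W^n=|G|K_{X'}^n$, with equality iff $\mathfrak{A}=0$, and then ampleness of $K_W$ yields the embedding (and, in the ``canonical class'' variant, vanishing of the conductor, hence normality of $W'$ and canonical singularities). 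Without this bookkeeping your conclusion does not follow. Two smaller points: your justification of finiteness for $n$ odd (``generic fibers must be zero-dimensional'') only restates generic finiteness and does not control all fibers; and $G$-equivariance of the maps to the curve factors is not obtained by ``averaging over $G$'' -- one uses that $G$ preserves each normal summand $\pi_{g_j}$ together with the uniqueness of the associated fibration, and the paper in fact reconstructs the $G$-action on $Z'$ a posteriori, the affine action on $A'$ being pinned down by Lemma \ref{affine}.
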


Hypothesis  (1) in the previous theorem allows to derive the
conclusion that also
$W : = \hat{X'} $ admits a holomorphic map $f'$ to a complex manifold $Z'$
with the same structure as $Z$, while
 hypotheses  (2) and following ensure  that the morphism is birational to its image,
and the class of the image
divisor $f' (\hat{X'} )$ corresponds to $\pm$ the one of $\hat{X} $  under
the identification
$$ H_*(Z' , \ZZ) \cong  H_*(\Ga , \ZZ) \cong   H_*(Z , \ZZ).$$

Since $K_{X'}$ is ample, one uses (***) to conclude that $f'$ is an isomorphism with its image.

The next question is weak rigidity, which amounts to the existence of a
connected complex manifold
parametrizing all such maps (or the  complex conjugate).
Here  several ingredients come into play, namely, firstly the Hodge type,
secondly a fine analysis of
the structure of the action of $G$ on $Z$, in particular concerning the existence of hypersurfaces on which $G$ acts freely. Finally, one would have to see
whether the family of the invariant effective divisors
thus obtained is parametrized by a connected family: this also
requires further work which we do not undertake here except that for the case of Inoue surfaces.

It would  take  long to analyse here the most general situation, yet
 there is an even  more general
situation worth to be investigated. This is the case of {\bf orbifold
Inoue type varieties},
where the action of $G$ is no longer free: this situation is
especially appealing from the point of view of the construction of new
interesting examples.

The paper is organized as follows:  in the first section we deal with general Inoue type manifolds,
establish the first  general properties
of Inoue type manifolds, and prove our main theorem \ref{special diagonal}. Further, more complete, results
dealing with weak rigidity will be given elsewhere.

Section two is devoted to preliminaries, for instance  on curves of 
genus 5 admitting symmetries by  $(\ZZ/2 \ZZ)^4$.
This is an important background for the construction of Inoue 
surfaces with $K^2_S = 7$ and $p_g (S)= 0$,
which is explained in detail in section three.
The end of section three is then devoted to a new result, namely, the calculation of the
first homology group of an Inoue surface, which is shown to be equal to the  group $(\ZZ/4 \ZZ)\oplus (\ZZ/2 \ZZ)^4$.

Section four proves the main result on Inoue surfaces with $K^2_S = 
7$ and $p_g (S)= 0$.

Finally, section five is devoted to showing that the the moduli space 
of Inoue surfaces is generically smooth: this is achieved
by looking at another  realization as  bidouble covers of a four nodal cubic.
\section{ Inoue type manifolds and varieties}

Assume that $X = \hat{X} / G$ is an Inoue-type manifold, so that 
there is an isomorphism
$\pi_1 (\hat{X} ) \cong \pi_1 (Z )= : \Ga $, by virtue of Lefschetz' theorem.

In general, if $W$ is another K\"ahler manifold with $\pi_1 (W  ) 
\cong \pi_1 (Z )= : \Ga $,
we would like to be able to assert that there exists a holomorphic 
map $ f : W \ra Z'$ where $Z'$ is another $K(\Ga,1)$ manifold and 
where  $f_* : \pi_1 (W  ) \ra \pi_1 (Z' ) \cong \Ga$ realizes the above isomorphism.

 This is for instance the case if $Z$ is a
compact quotient of an irreducible bounded symmetric domain of 
dimension at least 2 by a
torsion free subgroup; this follows by combining the results of Eells and Sampson  (\cite{es})
proving the existence of a harmonic map in each homotopy class of maps $ f : W \ra Z$, since $Z$
has negative curvature, with the results of Siu (\cite{siu1} and 
\cite{siu2}), showing the complex dianalyticity of the resulting harmonic map  (i.e., the map $f$  is holomorphic or antiholomorphic)
in the case where $f_* : \pi_1 (W  ) \ra \pi_1 (Z' ) \cong \Ga$ is an isomorphism, since then the differential of $f$ has rank $\geq 4$
(as a linear map of real vector spaces).
Observe that in this case $Z'$ is  $ Z$, or the complex conjugate $\bar{Z}$.
 
Another case is the case where $Z$ is a compact curve of genus $g 
\geq 2$. In this case,
after several results by Siu, Beauville and others (see \cite{albgentype}), a
  simple criterion was shown to be the existence of a surjection $\pi_1 (W  ) \ra 
\pi_1 (Z )$  with finitely generated kernel (see
\cite{barlotti} and \cite{cime2}, Theorem 5.14), an assumption which holds true in our situation.

It is on the above grounds that we restricted ourselves to special Inoue type manifolds
(the diagonality assumption is only a simplifying assumption).

Let us prove the first general result, namely, theorem \ref{special diagonal}.

\Proof {\em of Theorem \ref{special diagonal}.}

{ \bf Step 1}

The first step consists in showing that  $W : = \hat{X'}$ admits a holomorphic
mapping to a manifold $Z'$ of the above type $ Z' = (A'_1 \times 
\dots \times A'_r) \times
(C'_1 \times \dots \times  C'_h) \times (M'_1 \times \dots \times
M'_s)$, where $M'_i$ is either $M_i$ or its complex conjugate.

First of all, by the cited results of Siu and others (\cite{siu1}, 
\cite{siu2},\cite{barlotti},  \cite{cime2}, Theorem 5.14), $W$ admits a 
holomorphic map to
a product manifold $$Z_2'\times Z'_3 = (C'_1 \times \dots \times 
C'_h) \times (M'_1 \times
\dots
\times  M'_s).$$

Look now at the  Albanese variety $\Alb (W)$ of the K\"ahler manifold 
$W$, whose fundamental
group is the quotient of the Abelianization of $\Ga = \pi_1 (Z)$ by its torsion subgroup. 

Write the fundamental group of
$\Alb (W)$ as the first homology group of $A  \times  Z_2  \times Z_3$,
i.e., as 
$$ H_1 ( \Alb (W)) = \Lambda \oplus H_1 ( Z_2, \ZZ) \oplus (H_1 ( Z_3, \ZZ)/ \Torsion),$$ 
($\Alb (Z_2)$ is the product of Jacobians
$J : = (\Jac(C_1) \times \dots \times  \Jac(C_h))  $).

Since however, by the universal property,  $\Alb (W)$ has a holomorphic map to 
$$B' : = \Alb (Z'_2) \times \Alb (Z'_3),$$
inducing a splitting of the lattice  $H_1 (\Alb (W), \ZZ) = \Lambda \oplus 
H_1 (B', \ZZ)$,
it follows that $\Alb(W)$ splits as $A' \times B'$.

Now, we want to show that the Abelian variety $A'$ ($W$ is assumed to 
be a projective
manifold) splits as desired.  This is in turn a consequence of assumption (3) in definition \ref{SIT}.
In fact, the group $G$ acts on the Abelian variety $A'$ as a group of 
biholomorphisms,
hence it acts on $\Lambda \otimes \RR$ commuting with multiplication by 
$\sqrt{-1}$.
Hence multiplication by $\sqrt{-1}$ is an isomorphism of $G$ representations, and then (3)
 implies  that $\Lambda_i\otimes \RR$ is stable by  multiplication by 
$\sqrt{-1}$;
whence $\Lambda_i\otimes \RR$ generates a subtorus $A'_i$. Finally, $A'$ splits because $\Lambda$ is
the direct sum of the sublattices $\Lambda_i$. We are through with the 
proof of step 1.

{ \bf Step 2}

Consider now the holomorphic map $ f \colon W \ra Z'$. We shall show 
that  the image
$ W' : = f (W)$ is indeed a divisor in $Z'$.

This follows from the assumption  (SAME HOMOLOGY).

\begin{lemma}\label{cohalg}
Assume that $W$ is a K\"ahler manifold, such that
\begin{itemize}
\item[i)] there is an isomorphism of
fundamental groups $\pi_1 (W) = \pi_1 (\hat{X})= \Ga$, where $ 
\hat{X}$ is a smooth ample
divisor in a $K(\Ga, 1)$ complex projective manifold $Z$;
\item[ii)] there exists a holomorphic map  $ f : W \ra Z'$, where $Z'$ is another 
$K(\Ga, 1)$ complex  manifold,  such that $f_* : \pi_1 (W) \ra  \pi_1 (Z') = \Ga$ is an 
isomorphism, and moreover
\item[iii)]  {\bf (SAME HOMOLOGY)} there is an isomorphism $\Psi : H_*( W, \ZZ) \cong
H_*(\hat{X}, \ZZ)  $
of homology groups which is compatible with the homomorphisms
$$ u \colon H_*(\hat{X}, \ZZ)  \ra   H_*(\Ga , \ZZ)   ,  u'   \colon
 H_*( W, \ZZ)  \ra   H_*(\Ga , \ZZ)  ,   $$
i.e., $u \circ \Psi = u'$.

\end{itemize}
Then $f$ is a generically finite  morphism of $W$ into $Z'$, and 
the  cohomology class  $f_* ([W])$  in
$$ H^*(Z', \ZZ) = H^*(Z, \ZZ)
=H^*(\Ga, \ZZ)  $$ corresponds to $\pm 1$  the one  of $ \hat{X}$.
\end{lemma}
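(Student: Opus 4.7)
My plan is to exploit the fact that, since $Z$ and $Z'$ are both $K(\Gamma,1)$, the maps $i\colon\hat X\hookrightarrow Z$ and $f\colon W\to Z'$ are, up to homotopy, the classifying maps for $\hat X$ and $W$. The first step is to observe that by Lefschetz the inclusion $i$ induces an isomorphism on $\pi_1$ onto $\Gamma$, while by hypothesis (ii) $f$ does the same. Thus, by the universal property of Eilenberg--MacLane spaces, $i$ and $f$ are homotopic to classifying maps, and the canonical identifications $H_*(Z,\ZZ)=H_*(Z',\ZZ)=H_*(\Gamma,\ZZ)$ turn $i_*$ into $u$ and $f_*$ into $u'$.

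Next I would compare fundamental classes. Since $\Psi$ is a graded isomorphism of integral homology, the top non-vanishing degrees of $H_*(W,\ZZ)$ and $H_*(\hat X,\ZZ)$ must agree, forcing $\dim_{\CC}W=\dim_{\CC}\hat X=:n$. Both varieties are smooth, compact and oriented, so their top-dimensional homology groups are $\ZZ$, generated by the fundamental classes. Since $\Psi$ is an isomorphism $\ZZ\xrightarrow{\sim}\ZZ$ in degree $2n$, we get $\Psi([W])=\pm[\hat X]$. Applying the compatibility $u\circ\Psi=u'$ to $[W]$ then yields
$$f_*[W]\;=\;u'([W])\;=\;u(\Psi([W]))\;=\;\pm\, u([\hat X])\;=\;\pm\, i_*[\hat X]$$
in $H_{2n}(\Gamma,\ZZ)$, which under the canonical identifications is exactly the stated equality (using Poincar\'e duality this translates to the equality of cohomology classes $\pm[\hat X]$ in $H^*(Z,\ZZ)=H^*(\Gamma,\ZZ)=H^*(Z',\ZZ)$).

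It remains to deduce generic finiteness. The key point is that $i_*[\hat X]\neq 0$: indeed, its Poincar\'e dual in $H^2(Z,\ZZ)$ is $c_1(\Oh_Z(\hat X))$, which is an ample class and in particular nonzero. Hence $f_*[W]\neq 0$ in $H_{2n}(Z',\ZZ)$. If $f$ were not generically finite onto its image, then $\dim_{\CC}f(W)<n$, so $f$ would factor through a subvariety of real dimension less than $2n$, forcing $f_*[W]=0$; a contradiction. Therefore $\dim f(W)=\dim W$, and $f$ is generically finite. The only real subtlety I anticipate is the identification of $f$ (respectively $i$) with the classifying map to $K(\Gamma,1)$, which is standard homotopy theory but needs the $\pi_1$-isomorphism hypotheses in an essential way; everything else is a direct bookkeeping with fundamental classes together with the ampleness of $\hat X$ in $Z$.
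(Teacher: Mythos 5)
Your proof is correct and follows essentially the same route as the paper: identify the homology of $W$ and $\hat X$ via $\Psi$, push the fundamental classes into $H_{2n}(\Gamma,\ZZ)=H_{2n}(Z,\ZZ)=H_{2n}(Z',\ZZ)$ using $u\circ\Psi=u'$, and conclude via Poincar\'e duality that $f_*([W])=\pm[\hat X]$, with generic finiteness following because the (ample, hence nonzero) class of $\hat X$ could not be hit if $\dim f(W)<n$. You merely make explicit some steps the paper leaves implicit (the classifying-map identifications and the nonvanishing argument), so no further comment is needed.
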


{\it Proof of the Lemma.}
 We can identify under $\Psi : H_*(W, \ZZ) \cong
H_*(\hat{X}, \ZZ)  $ the homology groups of $W$ and $ \hat{X}$, 
and then the image
in the homology groups of
$ H_*(Z', \ZZ) = H_*(Z, \ZZ) =H_*(\Ga , \ZZ)  $ is the same.

We apply the above consideration to the fundamental classes of the oriented manifolds $W$ and $ \hat{X}$,
which are generators of the infinite cyclic top degree homology groups $H_{2n}(W, \ZZ)$,
respectively $H_{2n}( \hat{X}, \ZZ)$.

This implies a fortiori that $ f \colon W \ra Z'$ is generically finite:
since then the homology class  $f_* ([W])$ (which we identify to a cohomology class  by virtue of Poincar\'e duality)
equals  the class of $ \hat{X}$, up to sign.

\qed

{\bf Step 3}

We claim that  $ f \colon W \ra Z'$ is generically 1-1 onto its image $W'$.

Let in fact  $d$ be the degree of $ f \colon W \ra W'$.

Then   $f_* ([W]) = d [W']$, hence if the class of 
$\hat{X}$ is indivisible, then obviously $d=1$.

Otherwise, observe that the divisor $W'$ is an effective $G$-invariant divisor and use our assumption.

{\bf Step 4}

We  established that  $f$ is birational onto its image $W'$, hence it is a 
desingularization of $W'$.

We use now adjunction. We claim that, since $K_W$ is nef, there exists an effective 
divisor $\mathfrak A$,
called adjunction divisor, such that
$$K_W = f^* (K_{Z'} + W') - \mathfrak A.$$

This can be shown by taking the Stein factorization
$$ W \ra W^N \ra W', $$
where $W^N$ is the normalization of $W'$.

Let $\sC $ be the conductor ideal  $\mathcal{H}om (p_*\hol_{W^N}, \hol_{W'} )$
viewed as an ideal $\sC \subset \hol_{W^N}$; then the Zariski  canonical divisor 
of $W^N$ satisfies 
$$K_{W^N} =   p^*   (K_{W'}) - C =   p^*   (K_{Z'} + W') - C$$
where $C$ is the Weil divisor associated to the conductor ideal (the equality on the Gorenstein locus
of $W^N$ is shown for instance in \cite{bucharest}, then it suffices to take the direct image from the open set to
the whole of $W^N$).

In turn, we would have in general 
$K_W = h^* (K_{W^N } ) - \mathfrak B $, with $\mathfrak B $ not necessarily effective; but, by Lemma 2.5 of \cite{alessio},
see also Lemma 3.39 of \cite{KollarMori},
and since $- \mathfrak B $ is h-nef, we conclude that $\mathfrak B $ is effective.
We establish the claim by seting $\mathfrak A : = \mathfrak B + h^* C$.

Observe that, under the isomorphism of homology groups, $f^* (K_{Z'} + W')$
corresponds to $(K_{Z} + \hat{X})|_{\hat{X}}= K_{\hat{X}} $, in 
particular we have
$$ K_{\hat{X}}^n = f^* (K_{Z'} + W')^n = (K_W + \mathfrak A)^n . $$
If we assume that $K_W$ is ample, then $ (K_W + \mathfrak A)^n \geq (K_W)^n $,
equality holding if and only if $  \mathfrak A = 0$.

Under assumption (**), it follows that  $$K_{\hat{X}}^n = |G| K_X^n = 
|G| K_{X'}^n= K_W^n,$$
hence  $  \mathfrak A = 0$. Since however $K_W$ is ample, it follows 
that $f$ is an embedding.

If instead we assume that $K_W$ has the same class as $ f^* (K_{Z'} + W')$,
we conclude first that necessarily $\mathfrak B =0$, and then we get that $C= 0$.

Hence $W'$ is normal and $W$ has canonical singularities.

{\bf Step 4}

Finally, the group $G$ acts on $W$, preserving the direct summands of 
its fundamental group
$\Ga$. Hence, $G$ acts on the curve-factors, and the locally symmetric factors.

By assumption, moreover, it sends the summand $\Lambda_i$ to itself, 
hence we get a well defined linear action  on each
Abelian variety $A'_i$, so that we have a diagonal linear action of 
$G$ on $A'$.

Since however the image of $W$ generates $A'$, we can extend the 
action of $G$ on $W$ to a compatible  affine
action on $A'$.

Our final step consists in showing that 

\begin{lemma}\label{affine}
Given a  diagonal special Inoue type manifold, the real affine type of the action of $G$ on the
Abelian variety $ A = (A_1 \times \dots \times A_r)$ is determined by the fundamental group
exact sequence

$$ 1 \ra \Ga= \Pi_{i=1}^r (\Lambda_i) \times   \Pi_{j=1}^h (\pi_{g_j}) 
\times \Pi_{l=1}^s
(\pi_1 (M_l))
\ra \pi_1 ( X) \ra G \ra 1.$$ 
\end{lemma}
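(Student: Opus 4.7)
The real affine type of the $G$-action on the real torus $A = \Lambda\otimes\RR/\Lambda$, where $\Lambda := \bigoplus_i \Lambda_i = \pi_1(A)$, is determined by two pieces of data: the integral linear part $\rho\colon G\to GL(\Lambda)$ and a translational class in $H^1(G,A)$. My plan is to recover both from the given fundamental-group exact sequence.

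First I would exploit the diagonality hypothesis to reduce to an extension of $G$ by $\Lambda$. Since each factor of $\Ga$ is $G$-stable, the subgroup $\Pi := \prod_j \pi_{g_j}\times\prod_l\pi_1(M_l)$ is normal in $\pi_1(X)$, and quotienting by $\Pi$ produces
$$ 1\to\Lambda\to Q\to G\to 1, \qquad (\ast) $$
with $Q := \pi_1(X)/\Pi$; this extension is intrinsic to the fundamental-group data. The conjugation action of $Q$ on $\Lambda$ encoded in $(\ast)$ agrees, by functoriality of $\pi_1$ applied to the biholomorphisms of $A$ induced by $G$, with the integral linear part $\rho$ of the affine $G$-action on $A$.

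For the translational part I would use group cohomology. Affine $G$-actions on $A$ with linear part $\rho$ are parameterised by $1$-cocycles $t\colon G\to A$, and conjugation by a translation in $A$ alters $t$ by a coboundary; hence such actions modulo translation-conjugation are classified by $H^1(G,A)$. The short exact sequence of $G$-modules
$$ 0\to\Lambda\to\Lambda\otimes\RR\to A\to 0, $$
combined with the vanishing $H^i(G,\Lambda\otimes\RR)=0$ for $i\ge 1$ (finite $G$ acting on a $\QQ$-vector space), gives a connecting isomorphism $\delta\colon H^1(G,A)\xrightarrow{\sim} H^2(G,\Lambda)$. A direct computation with sections identifies, under $\delta$, the translational class of the geometric affine action with the extension class of $Q$: picking a set-theoretic section $s$ of $(\ast)$ selects affine motions $(\tilde t(g),\rho(g))$ of $\Lambda\otimes\RR$ lifting each $g\in G$, and the $2$-cocycle $\tilde t(g)+\rho(g)\tilde t(h)-\tilde t(gh)\in\Lambda$ represents simultaneously the extension class of $Q$ and $\delta$ applied to $g\mapsto \tilde t(g)\bmod\Lambda$. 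Together with the recovery of $\rho$ this pins down the real affine type.

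The main obstacle is the step identifying the abstract extension $(\ast)$ with the affine holonomy extension of the geometric action, i.e.\ realising $Q$ as the group of affine motions of $\Lambda\otimes\RR$ that lift elements of $G$ and preserve $\Lambda$. This relies on the fact that the $G$-action on $A$ produced earlier in the argument was constructed as the affinisation of the $G$-action on $W$ via the Albanese map, and therefore does preserve the lattice $\Lambda$ and induce the conjugation action of $(\ast)$. Once this identification is checked, the remainder reduces to standard homological algebra for finite groups.
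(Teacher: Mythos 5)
Your proposal is correct, but it takes a genuinely different route from the paper's own proof, and it is worth comparing the two. The first move is the same: both you and the paper pass to the quotient $Q=\overline{G}:=\pi_1(X)/\Pi$, $\Pi=\prod_j\pi_{g_j}\times\prod_l\pi_1(M_l)$, obtaining the extension $1\ra\Lambda\ra Q\ra G\ra 1$, and both arguments rest on the same geometric input, namely a homomorphism $Q\ra\Aff(\Lambda\otimes\RR)$ restricting on $\Lambda$ to the lattice translations and inducing the given $G$-action on $A$; the paper obtains this in one line from the diagonal action of $\pi_1(X)$ on $\CC^m\times\prod_j\mathbb{H}_j\times\prod_l\mathcal{D}_l$ (the product of universal covers of the factors of $Z$), which is exactly the justification for the identification you single out as the main obstacle, so that point is indeed the crux but is available. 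After that the methods diverge: the paper works concretely inside $\Aff(m,\CC)$, showing that the kernel $\sK$ of $\alpha:\overline{G}\ra\Aff(m,\CC)$ is the torsion of $\overline{G}_1=\ker(\alpha_L)$, that the translation subgroup is an enlarged lattice $\Lambda'\supset\Lambda$ with $\overline{G}_1=\Lambda'\oplus\sK$, and that the image is the group-theoretically determined semidirect product $\hat{G}=\Lambda'\rtimes G_2'$; you instead classify affine $G$-actions on the torus by the pair consisting of the linear part $\rho$ (recovered from conjugation in the extension) and a translational class in $H^1(G,A)$, and use the connecting isomorphism $H^1(G,A)\cong H^2(G,\Lambda)$, coming from the divisibility of $\Lambda\otimes\RR$, to identify that class with the extension class of $Q$; your cocycle computation with a set-theoretic section is the standard one and is valid even when $G$ acts non-faithfully on $A$ (the case $\sK\neq 1$), which your framework absorbs without extra care. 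What each approach buys: yours is a cleaner, self-contained crystallographic-cohomology argument that immediately gives determination of the action up to conjugation by translations; the paper's more explicit analysis additionally produces the structural data $\Lambda'$ and $G_2'$ (and their diagonal pieces $\Lambda_i'$, $G_{2,i}$), which the paper reuses later for Inoue surfaces, where $\Lambda_i'=\frac{1}{2}\Lambda_i$ and $G_{2,i}=\{\pm 1\}$. If you wanted to match the lemma's final sentence about the diagonal action on each $A_i$, you would only need to add that $\rho$ preserves the summands $\Lambda_i$ and that $H^1(G,A)=\oplus_i H^1(G,A_i)$, so your data decomposes factorwise as well.
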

{\em Proof.}
Define as before $\Lambda : = \Pi_{i=1}^r (\Lambda_i) = \pi_1 (A)$; moreover, since all the summands in
the left hand side are normal in $ \pi_1 ( X)$, set 
$$\overline{G} : =  \pi_1 ( X) / ( \Pi_{j=1}^h (\pi_{g_j}) 
\times \Pi_{l=1}^s
(\pi_1 (M_l)) ).$$ 

Observe now that $ X$ is the quotient of its universal covering 
$$\tilde{X} = \CC^m \times \prod_{j=1}^h\mathbb{H}_j \times \prod_{l=1}^s\mathcal{D}_l$$ by its fundamental group,
acting diagonally (here $\mathbb{H}_j$ is a copy of Poincar\'e's upper half plane,while $ \mathcal{D}_l$ is an irreducible  bounded symmetric domain
of dimension at least two), hence we  obtain that $\overline{G}$ acts on $\CC^m$ as a group of affine transformations.

Let $\sK$ be the kernel of the associated homomorphism $$ \alpha : \overline{G} \twoheadrightarrow \im (\alpha) =: \hat{G} \subset \Aff (m, \CC),$$
and let $$\overline{G}_1 : = \ker (  \alpha_L : \overline{G} \ra \GL  (m, \CC)).$$

$\overline{G}_1 $ is obviously Abelian, and contains $\Lambda$, and maps onto a lattice $\Lambda' \subset \hat{G}$.

Since $\Lambda$ injects into $\Lambda'$, $\Lambda \cap \sK = 0 $, whence $\sK$ injects into $G$, therefore $\sK$ is a torsion subgroup; since $\Lambda'$ is free, we obtain
$$\overline{G}_1 = \Lambda'  \oplus \sK,$$
and we finally get $$  \sK = \Tors (\overline{G}_1 ), \ \  \hat{G} =  \overline{G}  / \Tors (\overline{G}_1 ).$$

Since our action is diagonal, we can write $\Lambda'  = \oplus_{i=1}^r (\Lambda'_i) $, and the linear action of
the group $G_2 : = G / \sK$ preserves the summands. 

Since $ \hat{G} \subset \Aff (\Lambda')$, we see that $$ \hat{G} = (\Lambda')  \rtimes G'_2 ,$$
where $G'_2 $ is the isomorphic image of $G_2$ inside $ \GL ( \Lambda'))$. 

This shows that the affine group  $ \hat{G}$ is uniquely determined.

Finally, using the image groups $G_{2,i}$ of $G_2$ inside $ \GL ( \Lambda'_i))$,
we can  define uniquely groups of affine transformations of $A_i$ which fully determine
the diagonal action of $G$ on $A$ (up to real affine automorphisms of each $A_i$).

\qed

\QED {\em  for Theorem \ref{special diagonal}}
\bigskip

In order to obtain weak rigidity results, one has to 
introduce  a further invariant, called Hodge type, according to
the following
definition. We shall return to this question  in a  sequel to this paper.

\begin{defin}
   Let $X$ be an  Inoue-type manifold (or variety) of special diagonal 
type, with

$ Z = (A_1 \times \dots \times A_r) \times  (C_1 \times \dots \times
C_h) \times (M_1 \times \dots \times
M_s)$

Then an invariant of the integral representation $ G \ra Aut (\Lambda_i)$
is its {\bf Hodge type},
which is the datum, given the decomposition of $\Lambda_i \otimes \CC$
as the sum of
isotypical components $$\Lambda_i \otimes \CC = \oplus_{\chi \in \Irr (G)}
U_{i, \chi}$$
of the dimensions   $$\nu (i, \chi): = dim_{\CC} U_{i, \chi}  \cap
H^{1,0}(A_i) $$
of the Hodge summands for non real representations.
\end{defin}

\section {Genus $5$ curves having a $(\ZZ/2 \ZZ)^4$-action}\label{section2}

The following is well known (see however section 1 of \cite{burniat1}).

\begin{lemma}\label{genus1}
Let $E_1$ be a compact curve of genus $1$  and assume $ G_1: = (\ZZ/ 2
\ZZ)^n \subset Aut (E_1)$. Then $ n \leq 3$, and, for $n=3$,
$ E_1/ G_1 \cong \PP^1$ with quotient map  branched on exactly
$4$ points $P_1, \dots, P_4$.
The covering  $ E_1 \ra  E_1/ G_1$ factors through multiplication by 
2 in $E_1$.
\end{lemma}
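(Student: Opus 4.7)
The plan is to exploit the structure $\Aut(E_1) = E_1 \rtimes \Aut_0(E_1)$, where $\Aut_0(E_1)$ denotes the automorphisms fixing the origin. For a generic $j$-invariant, $\Aut_0(E_1) = \{\pm 1\} \cong \ZZ/2\ZZ$, while for special $j$-invariants ($j=0$ or $j=1728$) it is cyclic of order $4$ or $6$. In all cases the $2$-torsion of $\Aut_0(E_1)$ is exactly $\langle -1\rangle$.

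The first step is to let $T := G_1 \cap E_1$ be the subgroup of translations in $G_1$, so $T$ is a $2$-elementary subgroup of $E_1$, hence $T \subseteq E_1[2] \cong (\ZZ/2\ZZ)^2$. Next, the quotient $G_1/T$ injects into $\Aut_0(E_1)$, and being $2$-elementary must lie in the $2$-torsion $\langle -1\rangle$. Hence $|T|\leq 4$ and $|G_1/T|\leq 2$, giving $n\leq 3$, with equality forcing $T = E_1[2]$ and $G_1/T = \langle -1\rangle$.

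Assuming $n = 3$, I would then factor the quotient map in two stages. The translation subgroup $E_1[2]\subset G_1$ acts freely, so $E_1 \to E_1/E_1[2]$ is étale; moreover $E_1/E_1[2]$ is canonically identified with $E_1$ via the multiplication-by-$2$ isogeny, so $E_1\to E_1/G_1$ factors as $E_1 \xrightarrow{\cdot 2} E_1 \to E_1/\langle -1\rangle$. This proves the factorization claim, and identifies the quotient $E_1/G_1$ with $E_1/\langle -1\rangle \cong \PP^1$.

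Finally, the branching is determined as follows: the first map is unramified, and the involution $-1$ on the target $E_1$ has exactly the four $2$-torsion points as fixed points, whose images in $\PP^1$ give four distinct branch points $P_1,\dots,P_4$. As a cross-check one may invoke Riemann--Hurwitz: since $(\ZZ/2\ZZ)^3$ acts on a curve of genus $1$ with quotient $\PP^1$, and every nontrivial stabilizer of a point is cyclic and hence of order $2$, one has
$$0 = 8\cdot(-2) + \sum_{i} \frac{8}{2}(2-1) = -16 + 4k,$$
forcing $k = 4$. The main thing to be careful about is the injection $G_1/T \hookrightarrow \Aut_0(E_1)$ in the special $j$-invariant cases, but since only $2$-torsion elements of $\Aut_0(E_1)$ can appear, the argument is uniform.
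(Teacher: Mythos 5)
Your argument is correct. Note that the paper does not actually prove this lemma: it is stated as well known, with a pointer to section 1 of \cite{burniat1}, and only the genus-$5$ analogue (Lemma \ref{genus5}) is proved there, via Hurwitz's formula for the orbifold covering. Your route is therefore a genuinely self-contained alternative: you use the structure $\Aut(E_1)\cong E_1\rtimes \Aut_0(E_1)$, bound the translation part by $E_1[2]$ and the linear part by the $2$-torsion $\langle -1\rangle$ of $\Aut_0(E_1)$ (uniformly over the special $j$-invariants), and then read off the factorization through multiplication by $2$ directly from the subgroup $T=E_1[2]\subset G_1$. This buys an explicit description of the group action and of the factorization, which is exactly what the lemma is used for later (the statement that $E_1\to E_1/G_1$ factors through duplication), whereas the Hurwitz-formula style of the paper's Lemma \ref{genus5} would only give the numerics ($h=0$, four branch points, $n\le 3$ via the abelianized orbifold group); your Riemann--Hurwitz cross-check reproduces that computation anyway. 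One small imprecision worth fixing: the involution induced on $E_1/E_1[2]\cong E_1$ by a lift $z\mapsto -z+a$ is $w\mapsto -w+2a$, not literally $-1$; it becomes $-1$ after translating the origin to one of its four fixed points (equivalently, choose the origin of $E_1$ at a fixed point of an element of $G_1\setminus T$), and its fixed locus is a translate of $E_1[2]$ rather than $E_1[2]$ itself. This does not affect the conclusion that the quotient is $\PP^1$ with exactly four branch points.
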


\begin{lemma}\label{genus5}
Let $D$ be a compact curve of genus $5$  and assume $ H : = (\ZZ/ 2
\ZZ)^n \subset Aut (D)$.
Then $ n \leq 4$, and, if $ n = 4$, $ D / H \cong \PP^1$ and the quotient map is branched on exactly
$5$ points $P_1, \dots, P_5$.
\end{lemma}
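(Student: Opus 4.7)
\medskip

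\textbf{Proof plan.}
The strategy is to run the Riemann--Hurwitz formula for the Galois cover $D \to Y := D/H$ and combine it with a simple arithmetic divisibility.
First recall that, since an automorphism of a Riemann surface linearizes at a fixed point as multiplication by a root of unity, the stabilizer of any point of $D$ is cyclic. As $H = (\ZZ/2\ZZ)^n$ contains only cyclic subgroups of order $1$ or $2$, every point of $D$ with nontrivial stabilizer has stabilizer of order $2$, so every ramification point of $D \to Y$ has ramification index exactly $2$. Writing $g' := g(Y)$ and letting $b$ denote the number of branch points in $Y$, each branch point of $Y$ has $|H|/2 = 2^{n-1}$ preimages in $D$, each contributing $1$ to the ramification sum. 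Riemann--Hurwitz therefore gives
$$ 8 \;=\; 2g(D)-2 \;=\; 2^n(2g'-2) + b\cdot 2^{n-1}.$$

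Dividing by $2^{n-1}$ yields
$$ \frac{16}{2^n} \;=\; 4g'-4+b,$$
whose right-hand side is a non-negative integer (for $g'\geq 2$ it is already at least $4$, and we still need the left-hand side to be an integer). Hence $2^n \mid 16$, i.e., $n \leq 4$. For $n=4$ the identity becomes $4g' - 4 + b = 1$, whose only non-negative integer solutions are $(g',b) = (0,5)$ and $(g',b) = (1,1)$.

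The one nontrivial step is ruling out the case $(g',b) = (1,1)$. Here I would use the abelianness of $H$: if $Y$ is a complex torus and $P \in Y$ is a single point, then $\pi_1(Y\setminus\{P\})$ is free on two generators $a,b$, and the small loop around $P$ is (conjugate to) the commutator $[a,b]$, which maps to $0$ in any abelian quotient. Thus the local monodromy at $P$ of an $H$-Galois cover would be trivial in $H$, contradicting the fact that the ramification index at $P$ equals $2$. Equivalently, every abelian étale cover of $Y\setminus\{P\}$ extends uniquely to an abelian étale cover of $Y$, so no honest ramification over $P$ can occur. This excludes $(1,1)$ and leaves $(g',b) = (0,5)$, which is exactly the assertion $D/H \cong \PP^1$ with quotient map branched on exactly five points. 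The main (and only) obstacle is precisely this monodromy/abelianization argument; everything else is an immediate consequence of Riemann--Hurwitz and the structure of elementary abelian $2$-groups.
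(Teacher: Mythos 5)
Your proof is correct and follows essentially the same route as the paper: the Riemann--Hurwitz computation $8 = 2^{n-1}(4g'-4+b)$ forcing $n\le 4$, and for $n=4$ the exclusion of the torus-with-one-branch-point case via the fact that the loop around a single puncture is a commutator and so has trivial image in the abelian group $H$. Your write-up merely makes explicit two points the paper leaves implicit (that all stabilizers have order $2$, and the monodromy argument behind the abelianization step).
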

\proof
By the Hurwitz' formula, one has, setting $ D / H = C$, and setting
$h$= genus ($C$),
$$ 8 = 2^n (2h-2 + \frac{m}{2}) \Leftrightarrow  2^{n-4} (4h-4 + m) = 1$$
where $m$ is the number of branch points $P_1, \dots, P_m$. Hence
$n \leq 4 $. If $n=4$, then $h=1$ is not possible,
since in this case  the  abelianization of  $\pi_1 (C \setminus {P_1}) $ would equal $\pi_1
(C)$, and one would have $m=0$,
a contradiction.
Hence $h=0$ and $m=5$.

\qed

The following geometrical game is based on the fact that the 15
intermediate double covers of
   $ D / H = \PP^1$ are 5 elliptic curves (branched each on 4 of the 5
branch points) and 10 rational curves
(branched each on 2 of the 5 branch points).
Let $A_i$ be the elliptic curve branched on all the five points with
exclusion of $P_i$: then
$D \ra D / H $ factors as $$D \ra  A_i \ra A_i \ra D / H $$
where the middle map is multiplication by 2, and $D \ra  A_i$ is the
quotient by an involution with
   fixed points; the number of  fixed points is exactly 8, since, if $g
\in H$, the fixed set $\Fix(g)$ is an $H$-orbit, and has
therefore cardinality equal to a multiple of $8$. The other 10
involutions have no fixed points, hence they
yield each an unramified covering of a curve $C_j$ of genus $3$.

We try now to stick to Inoue's original notation, except that we refuse to
use the classical symbol for  the {\em Weierstrass $\wp$-function} to denote
the Legendre function $\sL$;   $\sL$ is a  homographic transform of
  the Weierstrass function, but not equal to the Weierstrass function.

  The Legendre function satisfies the quadratic relation (see \cite{burniat1})
  $$ y^2 = (\sL^2 - 1) ( \sL^2 - a^2) .$$

Let $E_1$, $E_2$ be two complex elliptic curves. We assume $E_i = \CC
/ \langle 1, \tau_i \rangle$. Moreover, we denote by
$z_i$ a uniformizing parameter on $E_i$.

  Then $(\ZZ/2 \ZZ)^3 =
\langle \gamma_1, \gamma_2, \gamma_3 \rangle$ acts on
$E_i$ by
\begin{itemize}
   \item[-] $\gamma_1(z_i) = -z_i$;
\item[-] $\gamma_2(z_i) = z_i + \frac 12$;
\item[-] $\gamma_3(z_i) = z_i + \frac{\tau_i}{2}$.
\end{itemize}

 We consider the Legendre $\sL$-function for $E_i$ and denote it by 
$\sL_i$,  for $i=1,2$:
$\sL_i$ is a meromorphic
function on $E_i$ and $\sL_i \colon E_i \ra \PP^1$ is a double cover 
ramified in
$\pm 1, \pm a_i \in \PP^1 \setminus \{0, \infty\}$.

It is well known that we have (cf. \cite{inoue}, lemma 3-2, and also 
cf. \cite{burniat1}, pages 52-54, section 1 for an algebraic 
treatment):
\begin{itemize}
   \item[-] $\sL_i(\frac 12) = -1$, $\sL_i(0) = 1$,
$\sL_i(\frac{\tau_i}{2}) = a_i$,  $\sL_i(\frac{1+\tau_i}{2}) = - 
a_i$;
\item[-] let $b_i:= \sL_i(\frac{\tau_i}{4})$: then $b_i^2 = 
a_i$;
\item[-] $\frac{\rm{d} \sL_i}{\rm{d} z_i} (z_i) = 0$ if and 
only if 
$z_i \in \{0, \frac 12,\frac{\tau_i}{2}, \frac{1+\tau_i}{2} 
\}$, since $\frac { d x_i}{ \sL_i} = dz_i$.
\end{itemize}

Moreover 
$$
\sL_i(z_i) = \sL_i(z_i+1) = \sL_i(z_i+\tau_i) = \sL_i(-z_i) = 
-\sL_i(z_i+ \frac 12),
$$
$$
\sL_i(z_i + \frac{\tau_i}{2}) = 
\frac{a_i}{\sL_i(z_i)}.
$$
We consider now the vector space 
$V_i:=H^0(E_i, \hol_{E_i}(2[0]))$ 
(for $i=1,2$), and note that $V_i 
\cong \CC^2$ with basis
$1, \sL_i$, since 
$$div (1 - \sL_i) = 2 [0] 
-  {\it Poles} (\sL_i) .$$ .

Observe that $[- \frac 14] + [\frac 
14]$ is a $(\ZZ/ 2\ZZ)^2 =\langle 
\gamma_1, \gamma_2 \rangle$ - 
invariant divisor, hence $V_i$
is a $(\ZZ / 2\ZZ)^2$ - module and 
splits in its isotypical components as
$$ V_i = V_i^{++} \oplus 
V_i^{+-},
$$ since $1$ is invariant under $(\ZZ/2\ZZ)^2$ and $\sL_i$ 
is invariant under $\gamma_1$ and is an eigenvector with
eigenvalue 
$-1$ of $\gamma_2$.

If $c \in \CC \setminus \{ \pm 1, \pm a_i, \pm 
a_1a_2 \}$, then the divisor
$$ D_c:=\{ (z_1,z_2) \in E_1 \times E_2 
\ | \ \sL_1(z_1) \sL_2(z_2) = c \}
$$

of bidegree $(2,2)$ is a 
smooth curve of genus $5$. More precisely,
$$
\hol_{E_1 \times 
E_2}(D_c) \cong p_1^* \hol_{E_1}(2[0]) \otimes p_2^* 
\hol_{E_2}(2[0]).
$$

Consider the product action of $(\ZZ/2 \ZZ)^3 
\times (\ZZ/2 \ZZ)^3$ 
on $E_1 \times E_2$.
\begin{rem} 1) It is easy 
to see that $D_c$ is invariant under the subgroup $H 
\leq (\ZZ/2 
\ZZ)^3 \times (\ZZ/2 \ZZ)^3$ given by
$$ (\ZZ/2 \ZZ)^3 \cong H:= 
\langle (\begin{pmatrix} 1\\ 0 \\ 0 
\end{pmatrix},\begin{pmatrix} 
0\\0\\0
\end{pmatrix}),(\begin{pmatrix} 0\\ 0 \\ 0 

\end{pmatrix},\begin{pmatrix} 1\\0\\0 
\end{pmatrix}),(\begin{pmatrix} 
0\\ 1 \\ 
0
\end{pmatrix},\begin{pmatrix} 0\\1\\0 \end{pmatrix}) 
\rangle,
$$

where the coordinates are taken with respect to the 
basis $\gamma_1, 
\gamma_2, \gamma_3$ on each factor.

2) Moreover, 
if we choose $c:= b_1b_2$, then we see that for 
$(z_1,z_2) \in 
D_{b_1b_2}$:
$$
\sL_1(z_1 + \frac{\tau_1}{2})\sL_2(z_2 + 
\frac{\tau_2}{2}) = 
\frac{a_1a_2}{\sL_1(z_1)\sL_2(z_2)} 
=
\frac{a_1a_2}{b_1b_2} = b_1b_2,
$$ whence $D_{b_1b_2}$ is invariant 
under
$$ (\ZZ / 2 \ZZ)^4 \cong G:= H \oplus \langle (\begin{pmatrix} 
0\\ 0 
\\ 1 \end{pmatrix},\begin{pmatrix} 0\\0\\1
\end{pmatrix}) \rangle \leq (\ZZ/2 \ZZ)^3 \times (\ZZ/2 \ZZ)^3.
$$
\end{rem}

We want to show that the converse holds. More precisely, we prove the following
\begin{prop}\label{genus5,2}
   Let $f \colon D \ra \PP^1$ be the maximal $G:= (\ZZ/2
\ZZ)^4$-covering branched in $5$ given points $p_1, \ldots , p_5 \in
\PP^1$. Then there are two elliptic curves $E_1$, $E_2$ such that $D
\subset E_1 \times E_2$ is a $G$-invariant divisor with
$$
\hol_{E_1 \times E_2}(D) \cong p_1^* \hol_{E_1}(2[0]) \otimes p_2^*
\hol_{E_2}(2[0]).
$$ Choosing appropriate coordinates we can moreover assume that $D=
\{ (z_1,z_2) \in E_1 \times E_2 \ | \ \sL_1(z_1)
\sL_2(z_2) = b_1b_2 \}$.
\end{prop}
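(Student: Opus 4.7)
The plan is to reconstruct the embedding $D \hookrightarrow E_1 \times E_2$ in three stages: describe the cover combinatorially via its monodromy; embed $D$ in the product of two of its elliptic intermediate quotients; and use the induced $G$-symmetry to normalize the equation.

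\emph{Step 1 (Monodromy and elliptic quotients).} Since $G = (\ZZ/2\ZZ)^4$ is abelian and $f$ is branched only at $p_1,\dots,p_5$, the cover is prescribed by monodromy elements $g_1,\dots,g_5 \in G\setminus\{0\}$ with $\sum_i g_i = 0$ and $\langle g_1,\dots,g_5\rangle = G$. Rank forces any four of the $g_i$ to be a basis, so the five are pairwise distinct; a short calculation using $\sum g_i=0$ then shows that for $i \ne j$ the sum $g_i + g_j$ is \emph{not} any $g_k$ (else $g_i + g_j + g_k = 0$ combined with $\sum_\ell g_\ell = 0$ equates the remaining two generators). By the Hurwitz formula, each involution $g_i$ has exactly the $8$ preimages of $p_i$ as fixed points, so $A_i := D/\langle g_i\rangle$ is elliptic, while every element of $G \setminus (\{0\}\cup\{g_1,\dots,g_5\})$ acts freely on $D$.

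\emph{Step 2 (Embedding as a $(2,2)$-divisor).} Fix two indices, say $4$ and $5$; set $E_1 := A_5 = D/\langle g_5\rangle$, $E_2 := A_4 = D/\langle g_4\rangle$, with quotients $q_i: D \to E_i$; and let $\phi := (q_1,q_2): D \to E_1 \times E_2$. Two distinct points $d, d' \in D$ satisfy $\phi(d) = \phi(d')$ iff $d' = g_5 d = g_4 d$, i.e.\ $d \in \Fix(g_4 g_5)$; but by Step~1, $g_4 g_5$ is one of the free involutions, so $\phi$ is injective. Moreover no $d$ is fixed by both $g_4$ and $g_5$ (else fixed by $g_4 g_5$), so at least one $q_i$ is \'etale at $d$, making $\phi$ an immersion and hence a closed embedding. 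The image $D' := \phi(D)$ is a smooth divisor whose projections to $E_1, E_2$ have degree $2$, so it has bidegree $(2,2)$; $G$-invariance of its class in $\mathrm{NS}(E_1 \times E_2)$ kills the mixed (Hom) component, so $\hol(D') = p_1^* M_1 \otimes p_2^* M_2$ with $\deg M_i = 2$, and translating the origins of $E_1, E_2$ we may take $M_i = \hol_{E_i}(2[0])$.

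\emph{Step 3 (Explicit equation).} The $G$-action descends to a faithful action of $(\ZZ/2\ZZ)^3 \cong G/\langle g_5\rangle$ on $E_1$ (faithfulness by connectedness: an element acting trivially on $D/\langle g_5\rangle$ either fixes $D$ pointwise or coincides with $g_5$). One verifies that $g_4$ acts on $E_1$ with exactly $4$ fixed points (the images of the $8$ fixed points of $g_4$ on $D$, which $g_5$ permutes freely since $g_4 g_5$ is fixed-point-free), so $g_4$ is a ``negation'' involution; symmetrically for $g_5$ on $E_2$. Applying Lemma~\ref{genus1} to each factor with these normalisations identifies $g_4 \mapsto (\gamma_1,0)$ and $g_5 \mapsto (0,\gamma_1)$ in $(\ZZ/2\ZZ)^3 \times (\ZZ/2\ZZ)^3$; matching the bases of the translation sub-$(\ZZ/2\ZZ)^2$'s on the two sides produces further elements of $G$ mapping to $(\gamma_2,\gamma_2)$ and $(\gamma_3,\gamma_3)$, recovering Remark~1. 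The equation of $D'$ is then a nonzero element of $V_1\otimes V_2 = \langle 1,\sL_1,\sL_2,\sL_1\sL_2\rangle$:
$$A + B\sL_1 + C\sL_2 + E\sL_1\sL_2 = 0.$$
Invariance under $(\gamma_2,\gamma_2)$, which flips the signs of both $\sL_i$, forces $B=C=0$, giving $\sL_1\sL_2 = c$ for some $c\in\CC^*$; invariance under $(\gamma_3,\gamma_3)$, which sends $\sL_i\mapsto a_i/\sL_i$, imposes $c^2 = a_1 a_2 = b_1^2 b_2^2$, whence $c = \pm b_1 b_2$, and after possibly replacing $b_1$ by $-b_1$ we obtain $c = b_1 b_2$.

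\emph{Main obstacle.} The principal technical hurdle is Step~3: after separately normalising the $(\ZZ/2\ZZ)^3$-actions on $E_1$ and $E_2$ via Lemma~\ref{genus1}, one must show that the bases of the translation subgroups of $\Aut(E_1)$ and $\Aut(E_2)$ can be chosen compatibly so that $G \hookrightarrow (\ZZ/2\ZZ)^3 \times (\ZZ/2\ZZ)^3$ is precisely the diagonal subgroup of Remark~1. This is essentially the converse of Inoue's original construction and amounts to a bookkeeping exercise tying the linear relations among the $g_i$ to the $2$-torsion structure shared by $E_1$ and $E_2$.
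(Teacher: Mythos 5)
Your route is essentially the paper's: read off the monodromy of the maximal $(\ZZ/2\ZZ)^4$-cover, use the fixed-point analysis to get the five elliptic quotients, embed $D$ into the product of two of them as a $G$-invariant divisor of bidegree $(2,2)$, and then normalize coordinates to reach the Legendre equation. You actually supply details the paper leaves implicit (injectivity and immersiveness of $(q_1,q_2)$ via the fixed-point-free involution $g_4g_5$, and the vanishing of the mixed N\'eron--Severi component of $[D']$, which one could also get from $D'^2=2g(D)-2=8$ by adjunction). The ``bookkeeping'' you flag as the main obstacle does work out: the subgroup of $G$ acting by translations on \emph{both} factors is $\{1,\,g_1g_2,\,g_1g_3,\,g_2g_3\}$, and by your Step 1 combinatorics it maps isomorphically onto $E_1[2]$ and onto $E_2[2]$, so its image is the graph of an isomorphism $E_1[2]\cong E_2[2]$, which a change of lattice basis on one factor turns into the identity; this yields exactly the diagonal group of the Remark in section \ref{section2}. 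Note also that your two origin normalizations are automatically compatible: $\hol(D')|_{\{x\}\times E_2}$ is the divisor $[w]+[\bar g_5(w)]$ with $\bar g_5$ the negation-type involution induced by $g_5$, so it is isomorphic to $\hol_{E_2}(2[0])$ precisely when the origin is a fixed point of $\bar g_5$ -- which is what Step 3 needs anyway.

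The one step that does not hold as written is in Step 3: invariance of the divisor under $(\gamma_2,\gamma_2)$ only forces its equation to be an \emph{eigenvector} of that involution, so besides $B=C=0$ you must also allow $A=E=0$. The second case is not vacuous: for $c'^2=a_1/a_2$ the curve $\{\sL_1(z_1)=c'\sL_2(z_2)\}$ is a smooth, $G$-invariant member of the same linear system, so nothing you have established so far excludes that $D'$ has this form. It does not contradict the Proposition, because translating the second factor by its half-period $\tau_2/2$ -- an automorphism that commutes with the image of $G$, preserves all of your normalizations, and replaces $\sL_2$ by $a_2/\sL_2$ -- carries $\{\sL_1=c'\sL_2\}$ to $\{\sL_1\sL_2=c'a_2\}$, i.e.\ back to the invariant pencil; this is precisely the extra use of ``appropriate coordinates'' that the statement permits (the paper absorbs the same point into its assertion that the origins can be fixed so that the defining section lies in $H^0(\cdot)^G$, and into computing the $H$-invariant rather than $H$-semi-invariant part of $V_1\otimes V_2$). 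With that one additional reduction spelled out, your argument is complete and coincides in substance with the paper's proof.
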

\begin{proof}
   Let $e_1, e_2,e_3,e_4$ be a basis of the $\ZZ/2 \ZZ$ - vectorspace
$G$ and let $D \rightarrow \PP^1$ branched in $p_1, \ldots
p_5$ be given by the {\em appropriate orbifold homomorphism}
$$
\varphi : \mathbb{T}(2,2,2,2,2) := \langle x_1, \ldots , x_5 |
\prod_{i=1}^5 x_i, x_1^2, \ldots, x_5^2 \rangle \rightarrow
(\ZZ/2\ZZ)^4,
$$ where $\varphi(x_i) = e_i$ for $1\leq i \leq4$, $\varphi(x_5) =
e_5:=e_1 +e_2+e_3+e_4$. Then Hurwitz' formula shows that
$D$ is a smooth curve of genus $5$.

Note that the only elements of $G$ having fixed points on $D$ are the 5 elements 
$e_i$ ($1 \leq i \leq 5$), and each of them has exactly $8$
fixed points on $D$. Hence, $E_i:=D/\langle e_i\rangle$ is an
elliptic curve.

We get therefore $5$ elliptic curves (as intermediate covers of $D
\rightarrow \PP^1$), all endowed with a $(\ZZ/2 \ZZ)^3$ - action.

Choose two of these elliptic curves, say $E_1, E_2$, and consider the morphism
$$ i \colon D \rightarrow D/\langle e_1\rangle \times D/\langle
e_2\rangle = E_1 \times E_2.
$$

Then $D \cdot E_i =2$ and $i$ is an embedding of $D$ as a $(\ZZ / 2
\ZZ)^4$ - invariant divisor of bidegree $2$.

We fix the origin in both elliptic curves so that $D= \{s=0 \}$, where $s \in H^0(E_1 \times E_2,
p_1^*(\hol_{E_1}(2[0]))) \otimes
p_2^*(\hol_{E_2}(2[0])))^G$.

It remains to show that we can assume $D$ to be of the form $\{
(z_1,z_2) \in E_1 \times E_2 \ | \ \sL_1(z_1) \sL_2(z_2) =
b_1b_2 \}$.

For this we show that
$$H^0(E_1 \times E_2, p_1^*(\hol_{E_1}(2[0]))) \otimes
p_2^*(\hol_{E_2}(2[0])))^H \cong \CC^2.$$ In fact,

\begin{multline*}
   H^0(E_1 \times E_2, p_1^*(\hol_{E_1}(2[0]))) \otimes
p_2^*(\hol_{E_2}(2[0]))) = V_1 \otimes V_2 = \\ = (V_1^{+++} \oplus
V_1^{++-}) \otimes (V_2^{+++} \oplus V_2^{++-}),
\end{multline*} whence
\begin{multline*} H^0(E_1 \times E_2, p_1^*(\hol_{E_1}(2[0])))
\otimes p_2^*(\hol_{E_2}(2[0])))^H = \\ = (V_1^{+++}
\otimes V_2^{+++}) \oplus (V_1^{++-} \otimes V_2^{++-}).
\end{multline*}

Therefore we have a pencil of  $H$-invariant divisors $D_c:=\{
(z_1,z_2) \in E_1 \times E_2 \ | \ \sL_1(z_1) \sL_2(z_2) = c
\}$. It is now obvious that $D_c$ is $G$ - invariant iff $c = \pm b_1b_2$.
The change of sign for $b_i$ is achieved by changing the point $\frac{\tau_i}{4}$
with $\frac{\tau_i}{4} + \frac{1}{2}$.
\end{proof}

Consider now for the moment  the action of $(\ZZ /
2\ZZ)^2 = \langle \gamma_1, \gamma_2\rangle$ on $E_1$ and $E_2$, and
the induced product action on $E_1 \times E_2$. Assume $H \cong (\ZZ
/ 2 \ZZ)^3 \leq (\ZZ / 2\ZZ)^2 \times (\ZZ / 2\ZZ)^2$,
such that
\begin{enumerate}
  \item on each factor $E_i$ it induces the action 
of $(\ZZ / 2\ZZ)^2= 
\langle \gamma_1, \gamma_2 \rangle$;
\item there 
is an $H$-invariant pencil in $\PP(V_1 \otimes V_2)$, 
i.e., $\dim 
(V_1 \otimes V_2)^H = 2$.
\end{enumerate}

Then it is easy to see 
that we can choose $(\gamma_1, 0)$, 
$(0,\gamma_1)$, $(\gamma_2, 
\gamma_2)$ as basis of $H$.

Therefore
$$ (V_1 \otimes V_2)^{+++} = 
(V_1^{+++} \otimes V_2^{+++}) \oplus 
(V_1^{++-} \otimes 
V_2^{++-}),
$$ and the $H$- invariant pencil is given by $D_c = 
\{(z_1,z_2) \in 
E_1 \times E_2 \ | \ \sL_1(z_1) \sL_2(z_2) = c 
\}$.

Consider now $G:= H \oplus \langle (\gamma_3, \gamma_3) \rangle 
\cong 
(\ZZ/2 \ZZ)^4$. 

Then $D:= \{(z_1,z_2) \in E_1
\times E_2 \ | \ 
\sL_1(z_1) \sL_2(z_2) = b_1b_2\}$.

\begin{rem} \
\begin{enumerate}
 
\item The restriction map  $$ H^0(E_1 \times E_2, p_1^*\hol_{E_1}(2[0]) \otimes p_2^* 
\hol_{E_2}) \rightarrow H^0(D, p_1^*\hol_{E_1}(2[0]) \otimes
p_2^* 
\hol_{E_2} | D)
$$ is an isomorphism of $H$-modules.
\item There is a 
pencil of such $H$- invariant divisors of degree 4 
on $D$. But there 
is no one which is invariant under $(\ZZ /2
\ZZ)^4$.
\end{enumerate}
\end{rem}

\begin{proof}[Proof 
of rem. ( 1).] Let be $S:= E_1 \times E_2$ and for 
simplicity we 
write
$$
\hol_S(2,0) := p_1^*\hol_{E_1}(2[0]) \otimes p_2^* 
\hol_{E_2}.
$$ Then we consider the exact sequence:
$$ 0 \rightarrow 
\hol_S(-D) \otimes  \hol_S(2,0) \rightarrow 
\hol_S(2,0) \rightarrow 
\hol_S(2,0) \otimes \hol_D \rightarrow 0.
$$ By K\"unneth's formula 
we get
\begin{itemize}
  \item[i)] $h^0(\hol_S(-D) \otimes 
\hol_S(2,0)) = 
h^0(\hol_{E_1})h^0(\hol_{E_2}(-2) = 0$;
\item[ii)] 
$h^1(\hol_S(-D) \otimes  \hol_S(2,0)) = 2$.
\end{itemize} 
Therefore
$$ r \colon H^0(S, \hol_S(2,0)) \cong \CC^2 \rightarrow 
H^0(S, 
\hol_S(2,0) \otimes \hol_D)
$$ is injective. Since $D$ is not 
hyperelliptic it follows by 
Clifford's theorem that $h^0(S, 
\hol_S(2,0) \otimes \hol_D) \leq 2$.
This implies that $h^0(S, 
\hol_S(2,0) \otimes \hol_D) = 2$ and $r$ is 
an isomorphism (of 
$H$-modules).
\end{proof}

\begin{rem}
  This implies that
$$ H^0(S, 
\hol_S(2,0) \otimes \hol_D) \cong \CC^2 = V^{+++} \oplus 
V^{++-}.
$$
\end{rem}

\section{Inoue surfaces with $p_g=0$ and 
$K_S^2 = 7$}

In \cite{inoue} the author describes the construction 
of a family of 
minimal surfaces of general type $S$ with $p_g= 0$, 
$K_S^2
= 7$.

We briefly recall the construction of these surfaces 
and, for lack of 
reference, we calculate $K_S^2$ and $p_g(S)$.

For 
$i \in \{1,2,3,4\}$, let $E_i := \CC / \langle 1, \tau_i \rangle$ 
be 
a complex elliptic curve. Denoting again by $z_i$ a
uniformizing 
parameter of $E_i$, we consider the following five 
involutions on 
$T:=E_1 \times E_2\times E_3 \times E_4$:
\begin{itemize}
\item[] $ g_1(z_1,z_2,z_3,z_4) = 
(-z_1+\frac 12, z_2 + \frac 12, z_3,z_4)$,
\item[] $g_2(z_1,z_2,z_3,z_4) 
= (z_1, -z_2 + \frac 12, z_3 +\frac 12,-z_4+ \frac 12)$,
\item[] $g_3(z_1,z_2,z_3,z_4) = (z_1+\frac 12, z_2, -z_3 + \frac 12,-z_4  + 
\frac 12)$,
\item[]  $g_4(z_1,z_2,z_3,z_4) = (z_1, z_2, -z_3,-z_4)$,
\item[] $g_5(z_1,z_2,z_3,z_4) = (z_1+\frac{\tau_1}{2}, 
z_2+\frac{\tau_2}{2}, 
z_3+\frac{\tau_3}{2},z_4+\frac{\tau_4}{2})$.
\end{itemize}
 Then $G:= \langle g_1,g_2,g_3,g_4,g_5 \rangle \cong (\ZZ / 2 \ZZ)^5$.

Consider
$$
\hat{X} := \{ (z_1,z_2,z_3,z_4)) \in T \ | \ \sL_1(z_1)
\sL_2(z_2)\sL_3(z_3)  = b_1b_2b_3, \ \sL_3(z)\sL_4(z_4) =
b_3b_4\}.
$$ Then
\begin{itemize}
   \item[-] $\hat{X}$ is a smooth complete intersection of two
hypersurfaces in $T$ of respective multidegrees $(2,2,2,0)$ and
$(0,0,2,2)$;
\item[-] $\hat{X}$ is invariant under the action of $G$, and $G$ acts
freely on $\hat{X}$.
\end{itemize}

The above equations show that  $\hat{X}$ is the complete intersection of a $G$ -
invariant divisor $X_1$ in the linear system
$$
\PP(H^0(E_1 \times E_2 \times E_3 \times E_4, p_1^*\hol_{E_1}(2[0])
\otimes p_2^*\hol_{E_2}(2[0]) \otimes
p_3^*\hol_{E_3}(2[0]) \otimes p_4^*\hol_{E_4})),
$$  with a $G$-invariant divisor $X_2$ ($\cong E_1 \times E_2 \times
D$), where $D$ is a curve of genus $5$, a Galois cover
with group $(\ZZ/ 2 \ZZ)^4$ of the projective line  ramified in 5
points) in the linear system
$$
\PP(H^0(E_1 \times E_2 \times E_3 \times E_4, p_1^*\hol_{E_1} \otimes
p_2^*\hol_{E_2} \otimes p_3^*\hol_{E_3}(2[0])
\otimes p_4^*\hol_{E_4}(2[0]))).
$$

\begin{rem}
   It is easy to see from the above explicit description that $G$ acts
freely on $\hat{X}$. Note that the action of $G$ has fixed points
on $T$, and even on $X_2$.
\end{rem}

\begin{defin} A smooth projective algebraic surface $S:= \hat{X} / G$ as above
is called an {\em Inoue surface} with $K_S^2 =7$ and
$p_g=0$.

\end{defin} 
 \begin{rem}
   It is immediate to see that the involution $g_4$ acts freely on  $ \hat{X} $
   and trivially on $(E_1 \times E_2)$.
   
    It follows therefore that,
    setting $ \hat{X}_2 : =  \hat{X} / g_4 $,
    $S$ has another
   representation as $ \hat{X}_2 / (\ZZ / 2 \ZZ)^4$, where
   $ \hat{X}_2$ is a divisor in the product $(E_1 \times E_2 \times C)$,
  and $C$ is a smooth curve of genus $3$.
   
This was the representation of $S$ announced in \cite{survey}, and it follows from our results that
this representation is also unique.

\end{rem}

We will show in the next lemma that $S$ is a minimal
surface of general type with $K_S^2 = 7$ and $p_g=0$.

\begin{lemma}\label{invariants}
  Let $S=\hat{X} /G$ be as above. 
Then $S$ is a minimal surface of 
general type with $K_S^2 = 7$ and 
$p_g=0$.
\end{lemma}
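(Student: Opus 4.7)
My plan is to verify, in order: smoothness of $\hat{X}$ with $G$ acting freely (both already asserted from the explicit equations), the intersection number $K_S^2 = 7$, the Euler characteristic $\chi(\hol_S) = 1$, the vanishings $p_g(S) = q(S) = 0$ via the alternate presentation, and finally ampleness of $K_S$, which makes $S$ minimal of general type automatically.

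For $K_S^2$: since $K_T = 0$, adjunction for the codimension-two complete intersection gives $K_{\hat{X}} = (X_1+X_2)|_{\hat{X}}$. Writing classes in $H^2(T,\ZZ)$ as $X_1 = 2(F_1+F_2+F_3)$, $X_2 = 2(F_3+F_4)$ with $F_i := p_i^*[0_{E_i}]$, and using $F_i^2 = 0$ together with $F_1 F_2 F_3 F_4 = 1$, one finds $X_1^3 X_2 = 96$, $X_1^2 X_2^2 = 64$, and $X_1 X_2^3 = X_1^4 = X_2^4 = 0$; hence $K_{\hat{X}}^2 = (X_1+X_2)^2 X_1 X_2 = 96 + 128 = 224$ and $K_S^2 = 224/32 = 7$. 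The Koszul resolution $0\to\hol_T(-X_1-X_2)\to\hol_T(-X_1)\oplus\hol_T(-X_2)\to\hol_T\to\hol_{\hat{X}}\to 0$, combined with $\chi(L) = c_1(L)^4/4!$ on the abelian fourfold and $(X_1+X_2)^4 = 4\cdot 96 + 6\cdot 64 = 768$, yields $\chi(\hol_{\hat{X}}) = 32$, so $\chi(\hol_S) = 1$ and hence $p_g(S) = q(S)$.

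To see $q(S) = 0$, I exploit the alternate presentation from the Remark: $S = \hat{X}_2/G_2$ where $G_2 := G/\langle g_4\rangle \cong (\ZZ/2\ZZ)^4$ and $\hat{X}_2 \subset E_1 \times E_2 \times C$ with $C := D/\langle g_4\rangle$ of genus $3$. The class of $\hat{X}_2$ has multidegree $(2,2,2)$ (descending from the multidegree $(2,2,4)$ of $\hat{X} \subset E_1\times E_2\times D$ via the étale double cover $D\to C$, which halves the degree on the third factor), so the defining line bundle is an external tensor product of ample line bundles and is therefore ample. By the Lefschetz hyperplane theorem, $H^0(\Omega^1_{\hat{X}_2}) \cong H^0(\Omega^1_{E_1}) \oplus H^0(\Omega^1_{E_2}) \oplus H^0(\Omega^1_C)$. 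Now $g_1 \in G_2$ inverts $dz_1$ and $g_2 \in G_2$ inverts $dz_2$, killing the first two summands; and the $G_2$-invariants in $H^0(\Omega^1_C) = H^0(\Omega^1_D)^{\langle g_4\rangle}$ lie inside $H^0(\Omega^1_D)^{\langle g_2, g_3, g_4, g_5\rangle}$, i.e., the invariants of the $(\ZZ/2\ZZ)^4$-action $G/\langle g_1\rangle$ on $D$. By Lemma~\ref{genus5}, this quotient is $\PP^1$, so the invariants equal $H^0(\Omega^1_{\PP^1}) = 0$. Hence $q(S) = p_g(S) = 0$.

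Finally, $X_1 + X_2$ has multidegree $(2,2,4,2)$, which is ample on $T$, so $K_{\hat{X}}$ and $K_S$ are ample; in particular $S$ is minimal of general type. The main obstacle is the $p_g = 0$ step, since $X_1$ and $X_2$ are not individually ample on $T$ and Lefschetz does not apply directly to the pair $\hat{X} \subset T$; the key device is the alternate realization $\hat{X}_2 \subset E_1\times E_2 \times C$, in which the embedding is by a genuinely ample divisor.
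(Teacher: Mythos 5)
Your argument is correct and reaches all the stated conclusions, but it takes a partly different route from the paper's. For the numerics you work on the abelian fourfold $T$: $K_{\hat{X}}^2=(X_1+X_2)^2X_1X_2=224$ and $\chi(\hol_{\hat{X}})=32$ via the Koszul resolution and Riemann--Roch on abelian varieties, whereas the paper works on the threefold $W=E_1\times E_2\times D$, viewing $\hat{X}$ as an ample divisor of multidegree $(2,2,4)$, and extracts $\chi(\hol_{\hat{X}})=32$ from the adjunction sequence $0\to\hol_W(K_W)\to\hol_W(K_W+\hat{X})\to\omega_{\hat{X}}\to 0$ together with K\"unneth and Kodaira vanishing (computing $p_g(\hat{X})=38$ and $q(\hat{X})=7$ along the way); your Koszul route is shorter and avoids that bookkeeping. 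For $q(S)=0$ the paper again uses $\hat{X}\subset W$: Lefschetz for this ample divisor gives $H^0(\Omega^1_{\hat{X}})\cong H^0(\Omega^1_{E_1})\oplus H^0(\Omega^1_{E_2})\oplus H^0(\Omega^1_D)$, and the $G$-invariants vanish since $g_1,g_2$ invert $dz_1,dz_2$ and $D$ modulo the image of $G$ is $\PP^1$. You are right that Lefschetz does not apply to $\hat{X}\subset T$, but you overlooked that the realization $\hat{X}\subset E_1\times E_2\times D$ is already by an ample divisor, so your detour through $\hat{X}_2\subset E_1\times E_2\times C$ is unnecessary -- though it does work and amounts to the same invariant computation one quotient lower. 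One small point deserves justification: a multidegree alone does not force a line bundle on a product of curves to be an external tensor product (think of a diagonal or a graph), so for the ampleness of $\hol(\hat{X}_2)$ you should either check by seesaw that its restrictions to the $E_1$-, $E_2$- and $E_1\times E_2$-slices are constant (they are, being pulled back isomorphically from slices of $W$), or simply observe that its pullback under the finite \'etale cover $E_1\times E_2\times D\to E_1\times E_2\times C$ is $\hol_W(\hat{X})$, which is ample; on $T$ itself there is no issue, since $\hol_T(X_1+X_2)$ genuinely is an external product of line bundles of degrees $2,2,4,2$. Finally, your explicit remark that $K_{\hat{X}}=(X_1+X_2)|_{\hat{X}}$ is ample, hence $K_S$ is ample by descent along the \'etale quotient, settles minimality directly; the paper's proof of the lemma is silent on this and relies on Inoue's ampleness statement cited in the introduction, so this is a welcome addition.
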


\begin{proof}

Since $\hat{X}$ is of general 
type, also $S$ is of general type being 
an \'etale quotient of 
$\hat{X}$ by a finite group.

We first remark that $\hat{X}$ is a 
$G$-invariant hypersurface of 
multidegree $(2,2,4)$ in $W:=E_1 
\times E_2 \times D$,
where $D \subset E_3\times E_4$ is a smooth 
curve of genus 5 given by 
the equation
$$
\{ (z_3,z_4))\in E_3 
\times E_4 \ | \ \sL_3(z)\sL_4(z_4) = b_3b_4\}.
$$

By the adjunction 
formula, the canonical divisor of $\hat{X}$ is the 
restriction to 
$\hat{X}$ of a divisor of multidegree $(2,2,12)$
on $W$.

Therefore 
we can calculate (denoting by $F_i$ the fibre of the 
projection of 
$W$ on the $(j,k)$-th coordinate (with $\{i,j,k\} 
=
\{1,2,3\}$).
\begin{multline*} K_{\hat{X}}^2 = ((K_W + 
[\hat{X}])|S)^2 = 
(2F_1+2F_2+12F_3)^2(2F_1+2F_2+4F_3)=\\ = 
(8F_1F_2+
48F_1F_3 + 48 F_2F_3)(2F_1+2F_2+4F_3)= \\ = 32F_1F_2F_3 + 
96F_1F_2F_3+96F_1F_2F_3 = 224 = 7 \cdot 2^5.
\end{multline*}

Since $G$ acts freely on $\hat{X}$, we obtain
$$ 
K_S^2 = \frac{224}{|G|} = \frac{224}{2^5} = 7.
$$

Moreover,  
consider the exact sequence
\begin{equation}\label{adjunction}
  0 
\rightarrow \hol_W(K_W) \rightarrow \hol_W(K_W + [\hat{X}]) 
\rightarrow \omega_{\hat{X}} \rightarrow 0.
\end{equation}

Using 
K\"unneth's formula and Kodaira's vanishing theorem, we 
get:
\begin{itemize}
\item[-] $\dim H^0(W, \hol_W(K_W)) = 
5$,
\item[-] $\dim H^0(W, \hol_W(K_W+ [X])) = 32$,
\item[-] $H^i(W, 
\hol_W(K_W+ [X])) = 0$, for $i=1,2,3$,
\item[-] $\dim H^1(W, 
\hol_W(K_W)) = 1 +5+5 = 11$,
\item[-] $\dim H^2(W, \hol_W(K_W)) = 
q(W) (= q(\hat{X})) = 7$.
\end{itemize} Therefore, by the long exact 
sequence associated to 
(\ref{adjunction}) we get:
$$ p_g(\hat{X}) = 
h^0(\hat{X}, \omega_{\hat{X}}) = 32+11-5 = 38.
$$ 
Therefore
$$
\chi(\hol_{\hat{X}}) = 1+p_g(\hat{X}) -q(\hat{X}) = 1 + 
38 -7 =32.
$$

This implies that $\chi(\hol_S) = 1$. In order to show 
that $p_g(S) = 
0$, it suffices to show that $q(S) =0$. Using the 
fact that
$$ i^* \colon H^0(W, \Omega^1_W) \rightarrow H^0(\hat{X}, 
\Omega^1_{\hat{X}})
$$ is an isomorphism and that
$$ H^0(W, 
\Omega^1_W) = H^0(E_1, \Omega^1_{E_1})\oplus H^0(E_2, 
\Omega^1_{E_2}) \oplus H^0(D, \Omega^1_D),
$$ it is easy to see that 
$H^0(W, \Omega^1_W)^G = H^0(S, 
\Omega^1_S)=0$.
\end{proof}

\subsection{The torsion group of Inoue surfaces with $K_S^2 =7$}
The aim of the section is to prove the following
\begin{theo}\label{homology}
Let $S$ be an Inoue surface with $K_S^2 = 7$. Then 
$$
H_1(S, \ZZ) \cong \ZZ/4\ZZ \times (\ZZ /2 \ZZ)^4.
$$
\end{theo}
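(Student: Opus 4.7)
The plan is to compute $H_1(S,\ZZ)=\pi_1(S)^{\mathrm{ab}}$ from the group extension
\[
1 \to \pi_1(\hat{X}) \to \pi_1(S) \to G \to 1, \qquad G=(\ZZ/2\ZZ)^5,
\]
induced by the étale cover $\hat{X}\to S$. Using the representation $\hat{X}\subset Y:=E_1\times E_2\times D$, the divisor $\hat{X}$ is ample on $Y$ (its restriction to each coordinate fibre is a divisor of positive degree), so Lefschetz gives $\pi_1(\hat{X})\cong\pi_1(Y)=\ZZ^2\times\ZZ^2\times\pi_5$, hence $H_1(\hat{X},\ZZ)\cong\ZZ^{14}$. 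Abelianizing the extension, the Lyndon--Hochschild--Serre five-term sequence gives
\[
H_2(G,\ZZ)\xrightarrow{\ d\ }H_1(\hat{X},\ZZ)_G\to H_1(S,\ZZ)\to G\to 0. \qquad(\ast)
\]

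First I would read off the $G$-action on $H_1(\hat{X})=H_1(E_1)\oplus H_1(E_2)\oplus H_1(D)$ from the formulas for $g_1,\ldots,g_5$. Since translations act trivially on $H^1$, only the linear parts contribute: $g_1$ acts as $-\mathrm{id}$ on $H_1(E_1)$ and trivially on the other summands; $g_2$ acts as $-\mathrm{id}$ on $H_1(E_2)$ and, via the surjection $H_1(D)\twoheadrightarrow H_1(E_3\times E_4)$ (Lefschetz for the ample divisor $D\subset E_3\times E_4$), as $\mathrm{diag}(\mathrm{id}_2,-\mathrm{id}_2)$ on the target; $g_3,g_4$ act as $-\mathrm{id}$ on $H_1(E_3\times E_4)$; and $g_5$ acts trivially. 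The action on the 6-dimensional kernel of $H_1(D)\to H_1(E_3\times E_4)$ is determined from the description of $D$ as a $(\ZZ/2\ZZ)^4$-Galois cover of $\PP^1$ branched in five points (Section~\ref{section2}), which decomposes $H_1(D,\QQ)$ into the character eigenspaces. Summing these data produces $H_1(\hat{X},\ZZ)_G$ explicitly as a finite abelian $2$-group.

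The second ingredient is to pick explicit lifts $\tilde g_i\in\pi_1(S)$ of the $g_i$ and compute $\tilde g_i^2,\ [\tilde g_i,\tilde g_j]\in\pi_1(\hat{X})$ directly from the defining formulas on the universal cover $\CC^4$ (intersected with the universal cover of $D$). For instance, $g_1^2(z_1,z_2,z_3,z_4)=(z_1,z_2+1,z_3,z_4)$, so $\tilde g_1^2$ represents the generator of the second $\ZZ$-factor of $H_1(E_2)$; the other four squares and the ten commutators are $2$-torsion translations on $T$ computed analogously. In the abelianization these yield relations $2\tilde g_i=[\tilde g_i^2]$ and $[\tilde g_i,\tilde g_j]=0$ in $H_1(\hat{X})_G$; the latter are precisely the image of the differential $d$ in $(\ast)$.

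Finally, I would bring the presentation
\[
H_1(S,\ZZ)=\bigl(H_1(\hat{X},\ZZ)_G\oplus\textstyle\bigoplus_{i=1}^{5}\ZZ\tilde g_i\bigr)\big/\bigl\langle 2\tilde g_i-[\tilde g_i^2],\ [\tilde g_i,\tilde g_j]\bigr\rangle
\]
to Smith normal form. The numerical check $|H_1(S)|=|G|\cdot|\mathrm{coker}(d)|=32\cdot 2=64$ fixes the overall size, and the structure comes from the fact that exactly one relation of the form $2\tilde g_i=[\tilde g_i^2]$ survives with $[\tilde g_i^2]$ nontrivial and itself $2$-divisible in the quotient (producing a $\ZZ/4\ZZ$-summand), while all other generators become $2$-torsion, giving $(\ZZ/2\ZZ)^4$. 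The main obstacle is the bookkeeping: organising the $G$-action on the genus-$5$ summand $H_1(D)$ and the ten commutator computations; I would handle this by choosing an integral basis of $H_1(\hat{X})$ adapted to the $G$-eigenspace decomposition (e.g.\ using the five intermediate elliptic covers $A_i$ of Section~\ref{section2} to express generators of $H_1(D)$).
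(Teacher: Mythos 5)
Your framework (abelianize the extension $1\to\pi_1(\hat X)\to\pi_1(S)\to G\to 1$ using lifts $\tilde g_i$, their squares and commutators, i.e.\ the five-term sequence) is the same basic mechanism the paper uses, and your identification of a lift squaring to a surviving translation class as the source of the $\ZZ/4$ is the right one. But there are two concrete problems. First, your description of the $G$-action on $H_1(\hat X,\ZZ)$ is wrong at a key point: $g_5$ does \emph{not} act trivially. It is a translation on $T$, so it is trivial on $H_1(E_1)\oplus H_1(E_2)$ and on the image $H_1(E_3\times E_4)$, but its restriction to $D$ is a fixed-point-free involution with quotient of genus $3$; by the Lefschetz fixed point formula its trace on $H_1(D,\ZZ)\cong\ZZ^{10}$ is $2$, not $10$, so it acts nontrivially (with trace $-2$) on the rank-$6$ kernel of $H_1(D)\to H_1(E_3\times E_4)$. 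Second, and more seriously, the step you relegate to ``bookkeeping'' --- computing the \emph{integral} coinvariants $H_1(\hat X,\ZZ)_G$ together with the image of the transgression --- is exactly where the whole difficulty sits, because the answer is a $2$-group: there is in general no integral basis of $H_1(D,\ZZ)$ adapted to the $(\ZZ/2\ZZ)^4$-eigenspace decomposition (the sublattice coming from the five intermediate elliptic covers $A_i$ has $2$-power index), so rational character-space arguments determine everything only up to precisely the $2$-torsion you are trying to pin down. As written, your ``numerical check'' $|\mathrm{coker}(d)|=2$ and the final assertion that exactly one relation $2\tilde g_i=[\tilde g_i^2]$ survives are the statements to be proved, not inputs.

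The paper is structured so as to avoid ever computing $H_1(D,\ZZ)_G$. It sandwiches $H_1(S)$: for the lower bound it mods out by $K=\ker(\pi_5\to\pi_1(E_3\times E_4))$, so that the relevant lattice becomes $\pi_1(T)\cong\ZZ^8$, where every $[\gamma_i,\gamma_j]$ and $\gamma_i^2$ is an explicit half-period translation; the resulting quotient $P$ of $H_1(S)$ is computed to be $\ZZ/4\ZZ\times(\ZZ/2\ZZ)^4$, of order $2^6$, with the $\ZZ/4$ coming from $g_5^2=(\tau_1,\tau_2,\tau_3,\tau_4)$ being nonzero modulo $2\Lambda$ and the span of the commutators. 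For the upper bound it uses that $g_1$ acts trivially on $H_1(D)$ together with the orbifold exact sequence for the $(\ZZ/2\ZZ)^4$-cover $D\to\PP^1$ branched in five points (whose abelianized orbifold group is $(\ZZ/2\ZZ)^4$) to show $|H_1(S)|\le 2\cdot 2^5$; the two bounds then force $H_1(S)\cong P$. If you want to keep your route, either import this sandwich, or you must genuinely determine $H_1(D,\ZZ)$ as a $\ZZ[(\ZZ/2\ZZ)^4]$-module (including the correct action of $g_5$) before the coinvariant and transgression computation can be carried out.
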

It is clear from the construction that the fundamental group of an Inoue surface sits in an exact sequence
\begin{equation}\label{fundex}
1 \rightarrow \mathbb{Z}^4 \times \pi_5 \rightarrow \pi_1(S)
\rightarrow G \cong (\mathbb{Z}/2 \mathbb{Z})^5
\rightarrow 1,
\end{equation}
 where $\pi_5$ denotes the fundamental group of a compact curve
 of genus five.
 
 Observe that after moding out by $\ZZ^4$ in the exact sequence (\ref{fundex}) we obtain the orbifold exact sequence (plus a summand $\ZZ/2 \ZZ$) of the maximal $(\ZZ / 2 \ZZ)^4$-covering of $\PP^1$ ramified in 5 points:
 \begin{equation}\label{orbifold}
 1 \ra \pi_5 \ra \pi_1^{orb} \times \ZZ/2 \ZZ  \ra G \cong (\mathbb{Z}/2 \mathbb{Z})^4 \times \ZZ/2 \ZZ \ra 1,
 \end{equation}
 where $\pi_1^{orb}:=\pi_1^{orb}(\PP^1 \setminus\{p_1, \ldots , p_5\};2,2,2,2,2)$.
 
 The proof of the above theorem will be divided in several steps, and we shall first 
 prove  some auxiliary results.
\begin{lemma}\label{action}
Assume that there is an exact sequence of groups
$$
1 \ra \Lambda \ra \Gamma' \ra G \ra 1,
$$
where $\Lambda$ and $G$ are abelian. 

Assume moreover that
\begin{itemize}
\item[($\#$)]  $\Lambda$ admits a a system $L$ of generators with the following property:
 $ \forall h \in L$  $ \exists g \in G$ such that 
$$
ghg^{-1} = -h.
$$
\end{itemize}
Then $2 \Lambda \subset [\Gamma', \Gamma']$ and, in particular, we have an exact sequence 
\begin{equation}\label{lem1}
\Lambda/ 2 \Lambda \ra \Gamma'^{ab} \ra G \ra 1.
\end{equation}
\end{lemma}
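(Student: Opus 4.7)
The plan is to show that $2\Lambda \subset [\Gamma',\Gamma']$ directly from hypothesis $(\#)$; the asserted exact sequence then follows formally by abelianizing $\Gamma'$.

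First I would fix an arbitrary generator $h \in L$, use $(\#)$ to pick $g \in G$ with $ghg^{-1} = -h$, and choose any lift $\tilde g \in \Gamma'$ of $g$. Since $\Lambda$ is abelian and normal in $\Gamma'$, the conjugation action of $\Gamma'$ on $\Lambda$ factors through $G$, so the relation $\tilde g h \tilde g^{-1} = h^{-1}$ already holds inside $\Gamma'$ (switching temporarily to multiplicative notation on $\Lambda$ for the commutator calculation, since the ambient group $\Gamma'$ need not be abelian). Therefore the commutator $[\tilde g, h] = \tilde g h \tilde g^{-1} h^{-1} = h^{-2}$ lies in $[\Gamma',\Gamma']$, i.e.\ $2h \in [\Gamma',\Gamma']$. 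As $h$ ranged over a generating set $L$ of $\Lambda$, the subgroup $2\Lambda$ is generated by such elements, so $2\Lambda \subset [\Gamma',\Gamma']$.

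For the exact sequence (\ref{lem1}), I would examine the composition $\Lambda \hookrightarrow \Gamma' \twoheadrightarrow \Gamma'^{ab}$. By what was just shown, this map kills $2\Lambda$ and hence factors through $\Lambda/2\Lambda \to \Gamma'^{ab}$. The cokernel of this factored map is $\Gamma'/(\Lambda \cdot [\Gamma',\Gamma'])$, which is a quotient of $G = \Gamma'/\Lambda$; but $G$ is already abelian, so the image of $[\Gamma',\Gamma']$ in $G$ is trivial and the cokernel equals $G$ itself. This gives exactness of $\Lambda/2\Lambda \to \Gamma'^{ab} \to G \to 1$.

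I do not anticipate any real obstacle here: the only point requiring a word of care is the passage from the hypothesis on $G$ to the corresponding relation inside $\Gamma'$, which is justified by the observation that any two lifts of $g \in G$ differ by an element of the abelian group $\Lambda$ and hence induce the same conjugation action on $\Lambda$. Note that the argument uses neither finiteness of $G$ nor any further structure on $\Lambda$ beyond its being abelian, which is consistent with the abstract formulation of the lemma.
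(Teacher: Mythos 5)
Your proof is correct and follows essentially the same route as the paper: conjugating a generator $h\in L$ by a lift of $g$ gives the commutator $[\tilde g,h]=h^{-2}$, so $2\Lambda\subset[\Gamma',\Gamma']$, and the exact sequence follows by abelianizing. The paper writes this in one line (with the lift left implicit); your added care about why the conjugation action of $G$ on $\Lambda$ is well defined is exactly the right justification and introduces no new ideas.
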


\begin{proof}
Let $h \in \Lambda$, $g \in G$ be such that $ghg^{-1} = -h$. Then $[g,h] = -2h$, whence $2h \in [\Gamma', \Gamma']$. 
Since this holds for all $h \in L$ and $L$ generates $\Lambda$ the claim follows.

\end{proof}

\begin{rem}\label{reduction}
i) Assume that there is an exact sequence of groups
$$
1 \ra H \ra \Gamma \ra G \ra 1,
$$
where $G$ is abelian. Defining $\Gamma' := \Gamma / [H,H]$,  we have an exact sequence
\begin{equation}\label{exrem}
1 \ra \Lambda:= H^{ab} \ra \Gamma' \ra G \ra 1,
\end{equation}
and we have
$$
(\Gamma')^{ab} = \Gamma^{ab}.
$$
Suppose now that assumption ($\#$) of lemma \ref{action} is satisfied for the exact sequence (\ref{exrem}). Then by lemma \ref{action} we have an exact sequence 
\begin{equation}
\tilde{\Lambda} :=  \Lambda/2 \Lambda \ra \Gamma^{ab} \ra G \ra 1.
\end{equation}

\noindent 
ii) Define $\Gamma'':= \Gamma' / 2 \Lambda$. Then we have an exact sequence
$$
1 \ra \tilde{\Lambda} \ra \Gamma'' \ra G \ra 1,
$$
and $(\Gamma'')^{ab} = (\Gamma')^{ab} = \Gamma^{ab}$.

Choose generators $(g_i)_{i \in I}$ of $G$ and choose for each $g_i$ a lift $\gamma_i$ to $\Gamma''$. Moreover, let $(\lambda_j)_{j \in J}$ be generators of $\Lambda$ and denote their images in $\tilde{\Lambda}$ by $\tilde{\lambda}_j$. Then obviously $\Gamma'' = \langle (g_i)_{i \in I}, (\tilde{\lambda}_j)_{j \in J} \rangle$.

Assume also that 
\begin{itemize}
\item[($\# \#$)] $\gamma_i \lambda_j \gamma_i^{-1} = \pm \lambda_j$, $\forall i \in I, \ \forall j \in J$.
\end{itemize}
Then (since $2\tilde{\Lambda} = 0$) we have $[\gamma_i, \tilde{\lambda}_j] = 0$ (in $\Gamma''$), $\forall i \in I, \ \forall j \in J$.

 In particular this implies that 
$$
\Gamma'' / \langle [\gamma_i, \gamma_j], i,j \in I \rangle = (\Gamma'')^{ab} = \Gamma^{ab}.
$$
\end{rem}

Since the genus 5 curve $D$ is an ample divisor $D \subset E_3 \times E_4$, we have by Lefschetz' theorem a surjective morphism
 $$
\varphi:  \pi_5 \cong \pi_1(D) \rightarrow \pi_1(E_3 \times E_4) \cong \ZZ^4.
 $$
Defining  $K:= \ker \varphi$ and  $\Lambda:= (\pi_5 \times \ZZ^4)/ (K \times \{0\}) \cong \ZZ^4 \times \ZZ^4$, we get the exact sequence
\begin{equation}\label{prop1}
1 \rightarrow \Lambda \rightarrow \pi_1(S) / K \rightarrow (\ZZ / 2 \ZZ)^5 \rightarrow 1.
\end{equation}

\begin{prop} 
$P:=(\pi_1(S) / K)^{ab} \cong  \ZZ/4\ZZ \times (\ZZ /2 \ZZ)^4$.
\end{prop}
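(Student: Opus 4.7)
The plan is to use Lemma \ref{action} and Remark \ref{reduction} to reduce the computation of $P = (\pi_1(S)/K)^{ab}$ to a presentation of a finite $2$-group of exponent at most $4$, then read off the answer from the explicit affine realization of the lifts $\gamma_i$ of the $g_i$.

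\emph{Reduction.} The lattice $\Lambda \cong \ZZ^4 \oplus \ZZ^4$ is generated by the eight translations $e_i, \tau_i$ ($i=1,\dots,4$). The conjugation action of $G$ on $\Lambda$ coincides with the action of the diagonal linear parts $A_i$ of $g_i$; inspection of the formulas for $g_1,\dots,g_5$ shows that each basis element of $\Lambda$ is negated by at least one $A_j$ (for instance $g_1$ negates $e_1,\tau_1$; $g_2$ negates $e_2,\tau_2,e_4,\tau_4$; $g_4$ negates $e_3,\tau_3,e_4,\tau_4$). So condition $(\#)$ of Lemma \ref{action} holds, and $(\#\#)$ of Remark \ref{reduction} is clear since the action is diagonal with eigenvalues $\pm 1$. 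Thus $P = (\Gamma'')^{ab}$ where $\Gamma'' := (\pi_1(S)/K)/2\Lambda$, and $\tilde\Lambda := \Lambda/2\Lambda$ is central in $\Gamma''$.

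\emph{Explicit relations.} Writing the affine lifts as $\gamma_i = (a_i, A_i)$ and using the commutator formula $[\gamma_i,\gamma_j] = (A_i-I)a_j + (I-A_j)a_i$ (valid since the $A_i$ commute), together with $\gamma_i^2 = (A_i+I)a_i$, a direct computation yields in $\tilde\Lambda$:
\begin{align*}
&\gamma_1^2=\tilde e_2,\ \gamma_2^2=\tilde e_3,\ \gamma_3^2=\tilde e_1,\ \gamma_4^2=0,\ \gamma_5^2=\tilde\tau_1+\tilde\tau_2+\tilde\tau_3+\tilde\tau_4,\\
&[\gamma_1,\gamma_2]=\tilde e_2,\ [\gamma_1,\gamma_3]=\tilde e_1,\ [\gamma_1,\gamma_4]=0,\ [\gamma_2,\gamma_3]=\tilde e_3,\ [\gamma_2,\gamma_4]=[\gamma_3,\gamma_4]=\tilde e_3+\tilde e_4,\\
&[\gamma_5,\gamma_1]=\tilde\tau_1,\ [\gamma_5,\gamma_2]=\tilde\tau_2+\tilde\tau_4,\ [\gamma_5,\gamma_3]=[\gamma_5,\gamma_4]=\tilde\tau_3+\tilde\tau_4.
\end{align*}
Killing all commutators in $(\Gamma'')^{ab}$ forces $\tilde e_1=\tilde e_2=\tilde e_3=\tilde e_4=\tilde\tau_1=0$ and $\tilde\tau_2=\tilde\tau_3=\tilde\tau_4$, so in $P$ one gets $2\gamma_i=0$ for $i\leq 4$ and $2\gamma_5=\tilde\tau_4$.

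\emph{The order-$4$ element and conclusion.} To finish, I would exhibit an explicit homomorphism $\chi\colon\Gamma''\to\ZZ/4\ZZ$ sending $\gamma_5\mapsto 1$, $\gamma_i\mapsto 0$ for $i\leq 4$, every $\tilde e_j\mapsto 0$, $\tilde\tau_1\mapsto 0$, and $\tilde\tau_2,\tilde\tau_3,\tilde\tau_4\mapsto 2$. All the relations listed above are respected modulo $4$ (for instance $\chi(\gamma_5^2)=0+2+2+2\equiv 2\equiv 2\chi(\gamma_5)$, and every commutator maps to $0$), so $\chi$ is well-defined and $\gamma_5$ has order exactly $4$ in $P$. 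Since $\gamma_1,\dots,\gamma_5$ project to a basis of $G=(\ZZ/2\ZZ)^5$, they are independent generators of $P$; counting orders gives $|P|=2^4\cdot 4=64$ and $P\cong \ZZ/4\ZZ \oplus (\ZZ/2\ZZ)^4$. The main obstacle is precisely this last step: detecting the $\ZZ/4$-factor forces one to guess the correct character to $\ZZ/4\ZZ$, a choice motivated by the observation that the half-period translation part of $g_5$ squares to $\tau_1+\tau_2+\tau_3+\tau_4\notin 2\Lambda$.
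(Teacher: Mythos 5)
Your proposal is correct and follows essentially the same route as the paper's proof: the same reduction via Lemma \ref{action} and Remark \ref{reduction}, the same explicit commutator table for the lifts $\gamma_1,\dots,\gamma_5$, and the same use of $\gamma_5^2=\tau_1+\tau_2+\tau_3+\tau_4$ to detect the $\ZZ/4\ZZ$-factor. The only (cosmetic) difference is your final certification via a character $\chi\colon \Gamma''\to \ZZ/4\ZZ$, whose well-definedness tacitly requires knowing that the listed relations present $\Gamma''$; the paper finishes more directly by noting that $\langle[\gamma_i,\gamma_j]\rangle$ lies in the central subgroup $\tilde\Lambda$, so $\tilde\Lambda/\langle[\gamma_i,\gamma_j]\rangle\cong\ZZ/2\ZZ$ injects into $P$ and $\gamma_5^2$ maps to its nonzero element, which gives the order-$4$ element without any extra justification.
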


\begin{proof}
We first verify condition ($\#\#$) for the exact sequence (\ref{prop1}). 

For this we recall the description of  the action of $G:= \langle g_1,g_2,g_3,g_4,g_5 \rangle \cong (\ZZ / 2 \ZZ)^5$ on 
$T:=E_1 \times E_2\times E_3 \times E_4$:
\begin{itemize}
\item[] $ g_1(z_1,z_2,z_3,z_4) = 
(-z_1+\frac 12, z_2 + \frac 12, z_3,z_4)$,
\item[] $g_2(z_1,z_2,z_3,z_4) 
= (z_1, -z_2 + \frac 12, z_3 +\frac 12,-z_4+ \frac 12)$,
\item[] $g_3(z_1,z_2,z_3,z_4) = (z_1+\frac 12, z_2, -z_3 + \frac 12,-z_4  + 
\frac 12)$,
\item[]  $g_4(z_1,z_2,z_3,z_4) = (z_1, z_2, -z_3,-z_4)$,
\item[] $g_5(z_1,z_2,z_3,z_4) = (z_1+\frac{\tau_1}{2}, 
z_2+\frac{\tau_2}{2}, 
z_3+\frac{\tau_3}{2},z_4+\frac{\tau_4}{2})$.
\end{itemize}
 
 Observe that in this situation $\Lambda = \pi_1 (T) \subset \CC^4$.

We claim now that for each generator $\lambda \in  \{\tau_1 e_1, \tau_2 e_2, \tau_3 e_3, \tau_4e_4, e_1,  e_2,e_3, e_4 \}$ there is a $g \in G$
 such that $g \lambda g^{-1} = - \lambda$.
 
  For this it suffices to find, for each $k \in \{1,2,3,4\}$, an element $g \in G$ whose linear action has $e_k$ as $-1$ eigenvector.
  
 But this is obvious (e.g. $(g_1g_3, g_1g_2,g_4,g_4)$ for $k=(1,2,3,4)$). 
 
 Lemma \ref{action} implies now that, setting $P':=\pi_1(S)/ K$, then  
 $$
 2\Lambda \subset [P', P'].
 $$
Denoting by $\gamma_1, \ldots , \gamma_5$ lifts to $P'$ of the generators $g_1, \ldots, g_5$ of $G$, we know that
$$
P' = \langle \gamma_1, \ldots , \gamma_5, \lambda_1, \ldots , \lambda_4 \rangle.
$$
Moreover, $[\gamma_i, \lambda_j] = \pm 2\lambda_j$, whence by remark \ref{reduction}, we have an exact sequence 
$$1 \ra \Lambda /2 \Lambda \ra P'':=P'/2 \Lambda \ra G \ra 1,
$$
and $P = (P')^{ab}=(P'')^{ab} = P'' / \langle [\gamma_i, \gamma_j], 1 \leq i,j \leq 5\rangle$.

A straightforward computation shows the following:

$[\gamma_1,\gamma_2] = \begin{pmatrix}
0\\1\\0\\0
\end{pmatrix}$, $[\gamma_1,\gamma_3] = \begin{pmatrix}
-1\\0\\0\\0
\end{pmatrix}$, $[\gamma_1,\gamma_4] = 0$, $[\gamma_1,\gamma_5] = \begin{pmatrix}
- \tau_1\\0\\0\\0
\end{pmatrix}$,

$[\gamma_2,\gamma_3] = \begin{pmatrix}
0\\0\\1\\0
\end{pmatrix}$, $[\gamma_2,\gamma_4] = \begin{pmatrix}
0\\0\\1\\1
\end{pmatrix}$, $[\gamma_2,\gamma_5] = \begin{pmatrix}
0\\-\tau_2\\0\\-\tau_4
\end{pmatrix}$, $[\gamma_3,\gamma_4] = \begin{pmatrix}
0\\0\\1\\1
\end{pmatrix}$,

$[\gamma_3,\gamma_5] = \begin{pmatrix}
0\\0\\-\tau_3\\-\tau_4
\end{pmatrix}$, $[\gamma_4,\gamma_5] = \begin{pmatrix}
0\\0\\-\tau_3\\-\tau_4
\end{pmatrix}$.

This immediately implies that 
$$
\tilde{\Lambda} /  \langle [\gamma_i, \gamma_j] \rangle \cong \ZZ/2 \ZZ
$$
hence we get the exact sequence 
$$
1\ra \ZZ/2 \ZZ \ra P \ra G = (\ZZ/2 \ZZ)^5 \ra 1.
$$

Since 
$$
g_5^2 = \begin{pmatrix}
\tau_1\\ \tau_2 \\ \tau_3\\ \tau_4
\end{pmatrix},
$$
it follows that $P \cong \ZZ/4\ZZ \times (\ZZ /2 \ZZ)^4$.

\end{proof}

Finally we prove
\begin{prop}
The natural surjective morphism 
$$H_1(S, \ZZ) = \pi_1(S)^{ab} \onto P
$$ is an isomorphism.
\end{prop}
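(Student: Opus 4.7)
The plan is to prove that $K\subseteq [\pi_1(S),\pi_1(S)]$; combined with the surjectivity already shown, this yields the desired isomorphism. Since $[\pi_5,\pi_5]\subseteq[\pi_1(S),\pi_1(S)]$ trivially, it suffices to check that the abelianized image $K^{ab}\subset H_1(D,\ZZ)$ --- namely the rank-$6$ kernel of the surjection $H_1(D,\ZZ)\twoheadrightarrow H_1(E_3\times E_4,\ZZ)$ --- maps to zero in $H_1(S,\ZZ)$. Abelianizing the conjugation relation $\gamma_i\alpha\gamma_i^{-1}=(g_i)_*(\alpha)$ (for a lift $\gamma_i$ of $g_i\in G$ and $\alpha\in\pi_1(\hat{X})$) forces $(g_i)_*(\alpha)\equiv\alpha$ in $H_1(S,\ZZ)$, so the natural map $H_1(\hat{X},\ZZ)\to H_1(S,\ZZ)$ factors through the $G$-coinvariants $H_1(\hat{X},\ZZ)_G$. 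Therefore everything reduces to showing that $K^{ab}$ lies in the subgroup $(1-G)\cdot H_1(D,\ZZ):=\langle g\beta-\beta\mid g\in G,\,\beta\in H_1(D,\ZZ)\rangle$.

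Rationally this vanishing is automatic: the effective action of $G$ on $D$ factors through $G/\langle g_1\rangle\cong(\ZZ/2\ZZ)^4$ with quotient $D/G=\PP^1$, hence $H_1(D,\QQ)_G=H_1(\PP^1,\QQ)=0$. To upgrade to an integral statement I would use the $G$-equivariant isogeny decomposition $H_1(D,\QQ)=\bigoplus_{i=1}^{5} H_1(E_{(i)},\QQ)$ arising from the five intermediate elliptic covers $E_{(i)}=D/\langle e_i\rangle$ of Lemma~\ref{genus5}. Two of these summands (those corresponding to the chosen quotients $E_3$ and $E_4$) span $H_1(E_3\times E_4,\QQ)$, so $K^{ab}\otimes\QQ$ is precisely the direct sum of the remaining three. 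For each such $E_{(k)}$, Lemma~\ref{genus1} guarantees that the induced $(\ZZ/2\ZZ)^3$-action on $E_{(k)}$ contains the inversion $z\mapsto-z$, which acts as $-1$ on $H_1(E_{(k)},\ZZ)$. Consequently $2\cdot H_1(E_{(k)},\ZZ)\subset(1-G)\cdot H_1(D,\ZZ)$, so the image of $K^{ab}$ in $H_1(D,\ZZ)_G$ is $2$-torsion.

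To eliminate this residual $2$-torsion I would invoke the additional relations in $H_1(S,\ZZ)$ coming from the commutators $[\gamma_i,\gamma_j]$ already computed in the preceding proposition: these supply explicit mod-$2$ identifications in $\pi_1(S)^{ab}$ which, traced through the projection onto the $D$-factor, are expected to collapse the three residual $H_1(E_{(k)},\ZZ/2\ZZ)$ contributions in the coinvariants down to zero. The main obstacle is precisely this last integral check: one must verify, using the explicit formulas for the action of $g_2,g_3,g_4,g_5$ on $(z_3,z_4)$ and the induced action on a chosen $\ZZ$-basis of $H_1(D,\ZZ)$, that the $2$-torsion classes produced by the three residual elliptic summands lie entirely in the subgroup of coinvariants generated by the $[\gamma_i,\gamma_j]$. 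After the preparatory reductions this amounts to a finite $\FF_2$-linear computation parallel to the one already carried out in the determination of $P$; once it is completed the argument closes, since the surjection $\pi_1(S)^{ab}\twoheadrightarrow P$ then has trivial kernel and hence is the claimed isomorphism.
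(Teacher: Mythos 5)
Your overall strategy is legitimate and genuinely different from the paper's: you aim to show that the image of $K$ in $\pi_1(S)^{ab}$ vanishes, so that the given surjection onto $P$ has trivial kernel, whereas the paper never analyzes this kernel at all \textemdash{} it bounds $|H_1(S,\ZZ)|$ from above by $2\cdot 2^5$ by passing through the quotients $\Gamma',\Gamma'',\Gamma'''$ (all with the same abelianization) and comparing with the orbifold fundamental group of $\PP^1$ with five points of order $2$, and then concludes because $|P|=2^6$. The reductions you perform (factoring $H_1(\hat{X},\ZZ)\to H_1(S,\ZZ)$ through the $G$-coinvariants, the rational isogeny decomposition of $H_1(D,\QQ)$ into the five elliptic pieces, and the observation that an element with linear part $-1$ makes twice each integral piece lie in $(1-G)H_1(D,\ZZ)$) are essentially sound, up to a small imprecision: $K^{ab}$ is the saturated kernel of $H_1(D,\ZZ)\onto H_1(E_3\times E_4,\ZZ)$ and may strictly contain the sum of the three integral sublattices, so a priori you only get that its image in the coinvariants is killed by a power of $2$, not that it has exponent $2$.

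The genuine gap is exactly where you place it yourself: the claim that the residual torsion classes are absorbed by the relations coming from the commutators $[\gamma_i,\gamma_j]$ is only ``expected'', and as described its verification is not a routine $\FF_2$-exercise parallel to the computation of $P$. To carry it out you need the components of the commutators $[\gamma_i,\gamma_j]$ in $H_1(D,\ZZ)$ (at least modulo $(1-G)H_1(D,\ZZ)$), but the table in the paper computes these commutators only modulo $K$, i.e.\ as elements of $\pi_1(T)\cong\ZZ^8$; precisely the $D$-information you need is the part that table does not determine, and extracting it requires a further, nontrivial computation on the curve $D$ that you have not done. The check is also delicate rather than automatic: since $(\Gamma/\pi_5)^{ab}\cong(\ZZ/2\ZZ)^5$ (the $E_1\times E_2$-components of the listed commutators already span all of $\Lambda_2/2\Lambda_2$), the image of the \emph{whole} of $H_1(D,\ZZ)$ in $H_1(S,\ZZ)\cong\ZZ/4\ZZ\oplus(\ZZ/2\ZZ)^4$ is a nonzero group of order $2$; so one cannot simply argue that all $2$-torsion coming from the $D$-factor collapses in the coinvariants \textemdash{} the argument must use membership in $K^{ab}$ in an essential way. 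Until that final integral verification is supplied, the proof is incomplete; if you want to avoid the explicit computation, the paper's counting argument (or a five-term exact sequence comparison with the orbifold extension) is the way the result is actually closed.
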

This implies theorem \ref{homology}.

\begin{proof}
Let $\Gamma:= \pi_1(S)$. Then from the exact sequence
$$
1 \rightarrow H:=\mathbb{Z}^4 \times \pi_5 \rightarrow \Gamma
\rightarrow G \cong (\mathbb{Z}/2 \mathbb{Z})^5
\rightarrow 1,
$$
we get by remark \ref{reduction} the exact sequence 
$$
1 \ra H^{ab} \cong \ZZ^{10} \times \ZZ^4 =: \Lambda_1 \oplus \Lambda_2 \ra \Gamma' \ra G \ra 1,
$$
and $\Gamma^{ab} =(\Gamma')^{ab}$. 

By lemma \ref{action} we get an exact sequence 
$$
\ZZ^{10} \times (\ZZ/2 \ZZ)^4 \ra \Gamma'' \rightarrow G \rightarrow 1,
$$
where again $(\Gamma'')^{ab} = \Gamma^{ab}$. 

Note that $g_1$ acts trivially on the curve $D$ of genus 5, whence it acts trivially on $H_1(D) \cong \ZZ^{10}$. 

Moreover, we have seen that the commutators $[\ga''_1, 
\ga''_i]$, $2 \leq i \leq 5$, span a subspace $V$ of rank 3 in $\Lambda_2 / 2 \Lambda_2 \cong (\ZZ/2 \ZZ)^4$. Therefore we get (after moding out by $V$) an exact sequence
$$
 \ZZ^{10} \times \ZZ/2 \ZZ \ra \Gamma''' \rightarrow G \rightarrow 1,
$$
where $(\Gamma''')^{ab} = \Gamma^{ab}$. After dividing by $\ZZ/2\ZZ$, we obtain the exact sequence 
$$
H_1(D, \ZZ) \rightarrow \pi := \Gamma'''/(\ZZ / 2 \ZZ)  \ra G \ra 1.
$$
We finally get the following commutative diagram with exact rows
\begin{equation}\label{diagram}
\xymatrix{
\ZZ/ 2 \ZZ\ar[r]&\Gamma'''\ar[d] \ar[r] & \pi\ar[d] \ar[r]&1\\
\ZZ/ 2 \ZZ\ar[r]&(\Gamma''')^{ab}\ar[r]&(\pi)^{ab} = (\pi_1^{orb})^{ab} \oplus \ZZ / 2 \ZZ \cong (\ZZ / 2 \ZZ)^5\ar[r]&1,
}
\end{equation}
(cf. the exact sequence (\ref{orbifold})). 

This implies that $H_1(S) = \Gamma ^{ab} = (\Gamma''')^{ab}$ has cardinality at most $2 \cdot 2^5 = 2^6$. Since $|P| = 2^6$, 
the surjective morphism $H_1(S, \ZZ) \onto P$ is then an isomorphism. 
\end{proof}

\section{Weak rigidity  of Inoue surfaces}

In this section we  shall prove the following:

\begin{theo}\label{homotopy}
    Let $S'$ be a smooth complex projective surface which is
homotopically equivalent to an Inoue surface (with $K^2 = 7$ and
$p_g=0$). Then $S'$ is an Inoue  surface.

The same consequence holds under the weaker assumptions that $S'$ has the same fundamental group of an Inoue surface,
and that (2) (SAME HOMOLOGY) of theorem \ref{special diagonal} holds.

\end{theo}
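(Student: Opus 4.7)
The strategy is to reduce Theorem \ref{homotopy} to Theorem \ref{special diagonal} by recognising the Inoue surface as a diagonal classical Inoue type manifold. First I would verify the axioms: $S = \hat S/G$ with $G = (\ZZ/2\ZZ)^5$ acting freely on the ample $(2,2,4)$-divisor $\hat S \subset Z := E_1 \times E_2 \times D$ from Section 3, the action on $Z$ is manifestly diagonal, and by Lefschetz $\pi_1(\hat S) \cong \pi_1(Z) = \Lambda_1 \oplus \Lambda_2 \oplus \pi_5$. The outer-conjugation action $G \to \mathrm{Out}(\Gamma)$ is faithful by inspection of the explicit formulas for $g_1,\dots,g_5$. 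The Schur property reduces to $\Hom(V_1, V_2)^G = 0$ for the two abelian summands $V_i = \Lambda_i \otimes \QQ$; this is already true over $H_1 := \langle g_1\rangle$, since the linear part of $g_1$ is $-\id$ on $V_1$ and $+\id$ on $V_2$, forcing any invariant $\varphi$ to satisfy $\varphi = -\varphi$.

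\emph{Applying the general theorem.} Under either homotopy equivalence or (SAME HOMOLOGY), Theorem \ref{special diagonal} delivers $W := \hat{S'}$ together with a generically finite morphism $f \colon W \to Z'$, where $Z' = E_1' \times E_2' \times D'$ is another $K(\Gamma,1)$ of the same product type, and $f_*([W]) = \pm [\hat S]$ under the canonical identification $H_*(Z',\ZZ) \cong H_*(\Gamma,\ZZ) \cong H_*(Z,\ZZ)$. I then verify the two technical assumptions needed to upgrade $f$ to an embedding. Assumption $(***)$ follows from the footnote in Theorem \ref{special diagonal}: for a smooth oriented compact $4$-manifold, $K^2 = 3\sigma + 2e$ is a homotopy invariant. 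For $(**)$, the class $(2,2,4)$ has strict submultiples $(1,1,1)$, $(1,1,2)$, $(1,1,4)$, $(2,2,2)$; whenever the degree on some $E_i'$ equals $1$, the translation $g_5$ by $\tau_i'/2$ moves the unique zero of a section of $\hol_{E_i'}([0])$, killing invariants, and the remaining case $(2,2,2)$ is excluded by the isotypical analysis of $V_1 \otimes V_2 \otimes H^0(D',L)$ of Section 2, in parallel with the proof of Proposition \ref{genus5,2}.

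\emph{From birational to isomorphism.} With $(**)$ in hand, $f$ is birational onto its image $W'$, a $G$-invariant $(2,2,4)$-divisor in $Z'$. Proposition \ref{genus5,2} applied to the genus-$5$ factor shows that $D' \subset E_3' \times E_4'$ is of Inoue form, and a parallel pencil analysis forces $W'$ to be cut out by $\sL_1(z_1)\sL_2(z_2)\sL_3(z_3) = b_1' b_2' b_3'$; every member of this family is smooth, because $d\sL_i/dz_i$ vanishes only at the four $2$-torsion points where $\sL_i \in \{\pm 1, \pm a_i\}$, values incompatible with the defining equation. Hence $W'$ is smooth, $K_{W'}$ is ample by adjunction (restriction of the ample class $(2,2,12)$), and the birational morphism $f \colon W \to W'$ between smooth minimal surfaces of general type is an isomorphism. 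Therefore $K_{S'}$ is ample and $S' \cong W'/G$ is an Inoue surface.

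\emph{Main obstacle.} The most delicate step is verifying $(**)$: ruling out the tridegree $(2,2,2)$ must be done on \emph{every} product $(Z',G)$ homotopy equivalent to $(Z,G)$, not only on the specific $Z$ used in the original construction, and this requires careful tracking of the diagonal $G$-action through the isotypical decomposition of Section 2. Once this is secured, the remaining ingredients are essentially formal applications of the machinery of Theorem \ref{special diagonal} together with the explicit pencil analysis of Proposition \ref{genus5,2}.
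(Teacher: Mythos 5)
Your overall route is the same as the paper's: recognize $S$ as a diagonal classical Inoue type manifold, feed it into Theorem \ref{special diagonal} to get a birational $G$-equivariant morphism of $\hat{X'}$ onto a $(2,2,4)$-divisor in a product $E_1'\times E_2'\times D'$, and then use the character/pencil analysis of Section \ref{section2} and Proposition \ref{genus5,2} to force Inoue's equations (the paper additionally pins down the two elliptic factors geometrically via Proposition \ref{irreg}, Lemma \ref{quot} and an Albanese lemma, which you bypass by appealing directly to the general theorem; that is legitimate). Your verification of the diagonal SIT axioms, of the Schur property via $g_1$, and of $(***)$ via homotopy invariance of $K^2$ all match the paper.

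Two corrections, neither of which breaks your argument. First, you have misread what a ``strictly submultiple'' class is: it is a class $c$ with $d\,c=[\hat{X}]=2F_1+2F_2+4F_3$ for some integer $d\geq 2$, so the only candidate is $d=2$, $c=F_1+F_2+2F_3$; your list $(1,1,1),(1,1,4),(2,2,2)$ consists of classes that are not submultiples at all, and in particular the ``main obstacle'' of excluding $(2,2,2)$ is vacuous. The only case to rule out is an effective $G$-invariant divisor of type $(1,1,2)$, and your half-period translation argument on the degree-one elliptic factor is exactly the right mechanism (the paper gets the normalized $G$-action on $Z'$ from Lemma \ref{affine} and then dismisses type $(1,1,2)$ as an easy verification); to make it airtight one should note that a class of type $(1,1,2)$ is a pure K\"unneth class, so the corresponding line bundle is a tensor product of pullbacks, after which invariance of a degree-one class on $E_i'$ under a nontrivial translation is impossible. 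Second, your claim that \emph{every} member of the pencil $\{\sL_1(z_1)\sL_2(z_2)\sL_3(z_3)=c\}$ is smooth is false (special values of $c$, products of the branch values $\pm 1,\pm a_i$, give singular members); what is needed, and what the paper uses, is only that the two $G$-invariant members $c=\pm b_1'b_2'b_3'$ are the Inoue divisors, which are smooth with free $G$-action, and then $K^2$ equality forces the birational morphism to be an isomorphism.
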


\begin{rem}
   It is clear from the definition that  $S$ is a diagonal CIT (classical Inoue   type) manifold.
   
   We can therefore apply theorem \ref{special diagonal} to this special case.
    In this special case, we are going to see that the groups $\hat{G}_i = \Lambda'_i \rtimes G_{2,i}$, $i=1,2$, are obtained simply by taking $  \Lambda'_i : = \frac{1}{2}  \Lambda_i$, and $ G_{2,i} : = \{ \pm 1\}$.
    
     We shall indeed view things geometrically, as follows.
   
  Recall  that there is an exact sequence
$$ 1 \rightarrow \mathbb{Z}^4 \times \pi_5 \rightarrow \pi_1(S)
\rightarrow G \cong (\mathbb{Z}/2 \mathbb{Z})^5
\rightarrow 1,
$$ where $\pi_5$ denotes the fundamental group of a compact curve
 of genus five.
\end{rem}

\noindent Let $\hat{X} \subset E_1 \times E_2 \times D$ be the
\'etale $(\ZZ /2\ZZ)^5$-covering of $S$. Observe that $H_1:=
\langle g_2,g_3,g_4,g_5 \rangle$ acts trivially on $H^0(E_1,
\Omega^1_{E_1})$. This shows that $q(\hat{X} /G) \geq 1$, and
$$ q(\hat{X} /H_1) = 1 \ \ \iff \ \ D/H_1 \cong \mathbb{P}^1.
$$ But this is obvious, since $D/G = D/H_1$.

Now, consider instead $H_2 := \langle g_1,g_3,g_4,g_5 \rangle$, which
acts trivially on $H^0(E_2, \Omega^1_{E_2})$.
Therefore
$$ q(\hat{X} /H_2) = 1 \ \ \iff \ \ D/H_2 \cong \mathbb{P}^1.
$$

$D/H_2 \cong \mathbb{P}^1$ follows from the following lemma
\ref{quot}, since $H_2$ contains three elements having fixed
points on $D$ (in fact, the elements  $(-z_3 + \frac 12,-z_4 + \frac
12)$, $(-z_3 + \frac{\tau_3}{2},-z_4 +
\frac{\tau_4}{2})$,$(-z_3 + \frac 12 +\frac{\tau_3}{2},-z_4 + \frac 
12+\frac{\tau_4}{2})$).

Therefore we have 
seen:

\begin{prop}\label{irreg}
  Let $S:=\hat{X} / G$ be an Inoue 
surface. Then there are subgroups 
$H_1, H_2 \leq G$ of index 2, such 
that $q(\hat{X}/H_1) =
q(\hat{X}/H_2) = 1$.

Let $S'$ be a smooth 
complex projective surface which has the same 
fundamental group as 
$S$. Then, denoting by $\hat{X}'$ the
corresponding \'etale 
$G$-covering of $S'$ we have:
\begin{itemize}
  \item there is a 
smooth curve $D'$ of genus 5 and a surjective 
homomorphism $\hat{X}' 
\rightarrow D'$;
\item there are two index two subgroups $H_1, H_2$ 
of $G$ such that 
$q(\hat{X}/H_1) = q(\hat{X}/H_2) = 
1$.
\end{itemize}

\end{prop}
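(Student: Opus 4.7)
The first assertion for $S$ is essentially established in the paragraphs preceding the statement: the subgroups $H_1 = \langle g_2,g_3,g_4,g_5\rangle$ and $H_2 = \langle g_1,g_3,g_4,g_5\rangle$ act trivially on $H^0(E_1,\Omega^1_{E_1})$ and on $H^0(E_2,\Omega^1_{E_2})$ respectively, while the quotients $D/H_i \cong \PP^1$ follow by a Hurwitz-formula count using the fixed points of the involutions in $H_i$ on $D$. The K\"unneth decomposition of $H^0(\hat{X},\Omega^1)$ then yields $q(\hat{X}/H_i) = 1$. So the substantive content is the statement for $S'$.

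For the second part I would first appeal to the remark preceding the statement, which identifies $S$ as a diagonal CIT manifold in the sense of Definition \ref{SIT}. Since $\pi_1(S') \cong \pi_1(S)$, the \'etale $G$-covering $\hat{X}'\to S'$ associated to $\ZZ^4 \times \pi_5 \subset \pi_1(S)$ has fundamental group $\ZZ^4 \times \pi_5$. Step 1 of the proof of Theorem \ref{special diagonal}, combining the Siu--Beauville criterion recalled in Section 1 with the Albanese-splitting argument (in which the SP property forces a genuine product decomposition of the Albanese), then produces a holomorphic map $\hat{X}' \to A'_1 \times A'_2 \times D'$ with $A'_i$ elliptic curves and $D'$ a smooth curve of genus $5$. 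Projection onto $D'$ gives the required surjection, and one reads off $\Alb(\hat{X}') = A'_1 \times A'_2 \times \Jac(D')$.

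To conclude $q(\hat{X}'/H_i) = 1$, I would compute $\dim H^0(\hat{X}',\Omega^1)^{H_i}$ factor by factor via this splitting. By Lemma \ref{affine}, the $H_i$-action on $\pi_1$ of each Abelian factor is pinned down, up to real affine automorphism, by the extension (\ref{fundex}), exactly as in the original Inoue case. Thus $H_1$ acts trivially on $H^0(A'_1,\Omega^1)$ (giving a $1$-dimensional contribution), as $-1$ on $H^0(A'_2,\Omega^1)$ (via the image of $g_2$, killing invariants), and $H_2$ behaves symmetrically. The main obstacle is the factor coming from $D'$: one must argue $H^0(D',\Omega^1)^{H_i} = 0$, i.e.\ $D'/H_i\cong \PP^1$, relying only on the inherited $(\ZZ/2\ZZ)^4$-orbifold data on $\pi_5$. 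This uses the weak rigidity of $G$-actions on curves (Section 1) together with the Hurwitz count of Lemma \ref{genus5}, which identifies the genus of $D'/H_i$ from the conjugation action of lifts of $H_i$ on $\pi_5 \subset \pi_1(S')$, an action that coincides with the original one up to inner automorphism.
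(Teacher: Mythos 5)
Your argument is correct, but for the statements about $S'$ it takes a genuinely heavier route than the paper. For the first bullet you and the paper agree in substance: the surjection $\pi_1(\hat{X}')\cong \ZZ^4\times\pi_5 \ra \pi_5$ with finitely generated kernel $\ZZ^4$ yields, by the Siu--Beauville--Catanese fibration criterion recalled in Section 1, a holomorphic fibration onto a smooth genus $5$ curve $D'$ (and Step 1 of Theorem \ref{special diagonal}, which you invoke, indeed uses only the isomorphism of fundamental groups, so there is no circularity or missing hypothesis in appealing to it). For the irregularities, however, the paper's (implicit) argument is purely topological: for a smooth projective surface $q=\tfrac12 b_1$, and $b_1$ is determined by the fundamental group; since $\pi_1(\hat{X}'/H_i)$ is the preimage of $H_i$ in $\pi_1(S')\cong\pi_1(S)$, hence isomorphic to $\pi_1(\hat{X}/H_i)$, one gets $q(\hat{X}'/H_i)=q(\hat{X}/H_i)=1$ immediately, the computation for the Inoue surface itself being exactly as you describe ($D/H_1=D/G\cong\PP^1$ because $g_1$ acts trivially on $D$, and $D/H_2\cong\PP^1$ by Lemma \ref{quot}). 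Your alternative --- Albanese splitting via (SP), Lemma \ref{affine} to pin down the affine type of the action on $A'_1\times A'_2$, and a rigidity argument identifying the topological type of the $G$-action on $D'$ so that the quotient genus can be read off --- does give the same answer and anticipates machinery the paper uses anyway in the proof of Theorem \ref{homotopy}, but none of it is needed for this proposition; note also that for $H_2$ the relevant count on $D'$ is Lemma \ref{quot} (its image in $\Aut(D')$ is only a $(\ZZ/2\ZZ)^3$ containing three elements with fixed points), rather than Lemma \ref{genus5}, and that the Betti-number argument makes transparent why nothing beyond $\pi_1(S')\cong\pi_1(S)$ is required.
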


\begin{lemma}\label{quot}

Let $D \rightarrow \mathbb{P}^1$ be the maximal $\Gamma \cong
(\mathbb{Z} / 2 \mathbb{Z})^4$ -covering branched in five
points. Then:

1) there are exactly 5 subgroups $H \leq \Gamma$, $H \cong
(\mathbb{Z}/2 \mathbb{Z})^3$ containing exactly one element
having fixed points on $D$ ($\implies$ $g(D/H) = 1$);

2) there are exactly 10 subgroups $H \leq \Gamma$, $H \cong
(\mathbb{Z}/2 \mathbb{Z})^3$ containing exactly three elements
having fixed points on $D$ ($\implies$ $g(D/H) = 0$).
\end{lemma}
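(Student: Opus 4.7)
The plan is to translate the geometric count into a counting problem for hyperplanes in $\mathbb{F}_2^4$. First I would identify the set of elements of $\Gamma$ that act with fixed points on $D$. From the orbifold presentation $\varphi: \mathbb{T}(2,2,2,2,2) \to (\mathbb{Z}/2\mathbb{Z})^4$ with $\varphi(x_i) = e_i$ for $1 \le i \le 4$ and $\varphi(x_5) = e_5 := e_1+e_2+e_3+e_4$, the only elements of $\Gamma$ with fixed points on $D$ are exactly $e_1,\dots,e_5$, and each has $8$ fixed points (as already noted in the discussion preceding Lemma~\ref{genus5}). In particular the key relation
\[ e_1 + e_2 + e_3 + e_4 + e_5 = 0 \]
holds in $\Gamma$.

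Next I would parametrize the index-$2$ subgroups $H \le \Gamma$ with $H \cong (\mathbb{Z}/2\mathbb{Z})^3$ by nonzero linear forms $\ell : \Gamma \to \mathbb{Z}/2\mathbb{Z}$ via $H = \ker \ell$, obtaining $15$ such subgroups. The relation above forces $\sum_{i=1}^5 \ell(e_i) = 0$, so the number of indices $i$ with $e_i \in H$ must be odd, i.e., $1$, $3$, or $5$. The case of $5$ is ruled out since $e_1,\dots,e_4$ generate $\Gamma$ and hence cannot all lie in a proper subgroup.

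Then I would count the two surviving cases by inspection. Any $\ell$ is freely determined by its values on $e_1,\dots,e_4$. The configurations with exactly one $e_i$ in $H$ correspond (via the complementary indices) to the $5$ choices $\{i\} \subset \{1,\dots,5\}$, and a case check on whether $i = 5$ or $i \le 4$ verifies that each choice is actually realized; similarly the configurations with exactly three $e_i$ in $H$ correspond to $\binom{5}{2} = 10$ choices of the complementary two indices. The total $5+10 = 15$ matches the total number of index-$2$ subgroups, confirming the count.

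Finally I would compute the genus of $D/H$ by factoring $D \to D/H \to \mathbb{P}^1$, where the second map is the double cover associated to the quotient $\Gamma/H \cong \mathbb{Z}/2\mathbb{Z}$. Its branch locus consists precisely of those $p_i$ for which $e_i \notin H$, hence of $4$ points in case (1) and $2$ points in case (2). By Hurwitz this yields $g(D/H) = 1$ and $g(D/H) = 0$ respectively. No step presents any real obstacle; the only thing worth a moment of care is the check that every linear form listed in the combinatorial count really is nonzero on $\Gamma$, which is immediate from the relation $e_5 = e_1+e_2+e_3+e_4$. \qed
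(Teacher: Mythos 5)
Your proof is correct, and it takes a genuinely different route from the paper. The paper argues directly on the subgroups: it first shows (via Hurwitz, to exclude the \'etale case, and an explicit case analysis on a third generator $\lambda_1e_1+\dots+\lambda_4e_4$) that an index two subgroup containing two of the $e_i$ must contain a third, then counts the $\binom{5}{3}=10$ subgroups $\langle e_i,e_j,e_k\rangle$ and deduces that the remaining $5$ contain exactly one $e_i$; for the genus statements it picks explicit representatives $H_1=\langle e_1,e_2,e_3\rangle$ and $H_2=\langle e_1+e_2,e_1+e_3,e_1+e_4\rangle$ and computes through intermediate quotients ($e_5$ acting with two fixed points on the genus $2$ curve $D/\langle e_1+e_2,e_1+e_3\rangle$, etc.). You instead dualize: parametrizing the $15$ index two subgroups by nonzero linear forms $\ell$ on $\mathbb{F}_2^4$, the relation $e_1+\dots+e_5=0$ forces $\sum_i\ell(e_i)=0$, so the number of $e_i$ in $H$ is odd, and the case ``all five'' is excluded because $e_1,\dots,e_4$ generate; the count $5+10=15$ then closes the combinatorics without the paper's ``two implies three'' case check (and also subsumes the \'etale-case exclusion, since odd means at least one). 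Your genus computation is likewise uniform: $D/H\ra\PP^1$ is the $\ZZ/2$-cover attached to $\Ga/H$, branched exactly at the $p_i$ with $e_i\notin H$, giving $4$ resp.\ $2$ branch points and hence $g=1$ resp.\ $g=0$ at once, with no choice of representatives and no implicit appeal to symmetry. Both arguments are complete; yours is more systematic and treats all subgroups simultaneously, while the paper's is more hands-on and exhibits the concrete intermediate curves that are used again later in the geometric discussion.
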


\begin{proof} Let $D \rightarrow \mathbb{P}^1$ be the maximal $\Gamma
\cong (\mathbb{Z} / 2 \mathbb{Z})^4$ -covering
branched in five points $p_1, \ldots p_5$, which determines a
surjective homomorphism
$$
\varphi \colon \pi_1(\mathbb{P}^1 \setminus \{p_1, \ldots p_5 \})
\rightarrow \Gamma.
$$

We denote $\varphi(\gamma_i)$, where $\gamma_i$ is a geometric loop
around $p_i$ by $e_i$. Then $e_1, e_2,e_3, e_4$ is a
$\mathbb{Z}/ 2\mathbb{Z}$ - basis of $\Gamma$ and $e_1 + e_2 +e_3+e_4 = e_5$.

\begin{claim}
   Let $H \cong (\ZZ /2 \ZZ)^3 \leq \Gamma$. Then either there is a
unique $i \in \{ 1, \ldots , 5 \}$ such that $e_i \in H$ (and
$e_j \notin H$ for $j \neq i$, or there is a subset $\{i,j,k \}
\subset \{ 1, \ldots , 5 \}$ such that $e_i, e_j, e_k \in H$ (and the
other two $e_l$'s are not in $H$).
\end{claim}

{\em Proof of the claim.} It is clear that $H$ contains at
least one of the $e_i$'s, otherwise $D \rightarrow D/H$ is
\'etale. By Hurwitz' formula, we get
$$ 8 = 2g(D) -2 = |H|(2(g(D/H)-2) = 8(2g(D/H)-2),
$$ a contradiction. Since any four of the $e_i$'s are linearly
independent, $H$ can contain at most three of them.

Assume now that there are $i \neq j$, such that $e_i, e_j \in H$.
W.l.o.g. we can assume $e_1, e_2 \in H$. Then $H = \langle e_1,
e_2, 
\lambda_1e_1 +\lambda_2e_2 +\lambda_3e_3 +\lambda_4e_4 \rangle$, 

$\lambda_i \in \{0,1\}$. Note that $e_1, e_2,
\lambda_1e_1 
+\lambda_2e_2 +\lambda_3e_3 +\lambda_4e_4$ are linearly 
independent 
if and only if $(\lambda_3,
\lambda_4) \neq (0,0)$. 
Moreover,
\begin{itemize}
\item $e_3 \notin H  \iff 
(\lambda_3,\lambda_4) \neq (1,0)$,
\item $e_4 \notin H  \iff 
(\lambda_3,\lambda_4) \neq (0,1)$,
\item $e_5 \notin H  \iff 
(\lambda_3,\lambda_4) \neq (1,1)$.
\end{itemize} This shows: if $H$ 
contains two of the $e_i$'s then 
also a third one.

\QED

Now, 
if $H$ contains three of the $e_i$'s, say $e_i, e_j,e_k$, then 
$H = 
\langle e_i, e_j, e_k \rangle$, and there are exactly
$\binom{5}{3} = 
10$ such subgroups.

The remaining $5$ subgroups $H \cong (\ZZ / 2 
\ZZ)^3$ of $\Gamma$ 
contain therefore exactly one of the 
$e_i$'s.

Assume now that
$$ H_1 := \langle e_1, e_2, e_3 \rangle, \ 
\ H_2:= \langle e_1+e_2, 
e_1+e_3, e_1+e_4 \rangle.
$$

Then it 
remains to show that $g(D/H_1)=0$ and $g(D/H_2) = 1$.

Observe that 
$H_2 = \langle e_5 \rangle \oplus \langle e_1 +e_2, 
e_1+e_3 
\rangle$, and $H':=\langle e_1 +e_2, e_1+e_3
\rangle$ acts freely on 
$D$. Therefore $g(D/H') = 2$ and $e_5$ acts 
on $D/H'$ having two 
fixed points. By Hurwitz' formula this
implies that $g(D/H) 
=1$.

Now, $g(D/ \langle e_1 \rangle) = 1$ and $H_1/ \langle e_1 
\rangle 
\cong \langle e_2, e_3\rangle$ acts with fixed points on
$D/ 
\langle e_1 \rangle$. This shows that $D/H_1 \cong 
\PP^1$.

\end{proof}

Now we are ready to finish the proof of  \ref{homotopy}.

\begin{proof}(of thm. \ref{homotopy})

Let $S'$ be a 
smooth complex projective surface which is homotopically equivalent 
to an Inoue surface $S$ with $K_S^2 =7$ and
$p_g=0$. 

In particular 
$\pi_1 (S) \cong \pi_1(S')$ and we take the \'etale $G:=(\ZZ / 
2\ZZ)^5$- covering $\hat{X}'$, which
is homotopically equivalent to 
$\hat{X}$ (the corresponding covering 
of the Inoue surface $S$).  By 
proposition \ref{irreg} we
know that $\hat{X}'$ admits a morphism to 
a curve $D'$ of genus 5, 
and there are subgroups $H_1, H_2 \leq G$ 
of index 2 such
that $X_i:=\hat{X}'/H_i$ has irregularity 
one.

Therefore there are elliptic curves
$E_1', E_2'$ and 
morphisms
$$
\hat{X}' \rightarrow X_i \rightarrow E'_i.
$$ By the 
universal property of the Albanese map we get a commutative 
diagram

\begin{equation}\label{diagr}
\xymatrix{
\hat{X}'\ar[r]\ar[d]&E_1' 
\times E_2' \times \Jac(D')\\
\Alb(\hat{X}')\ar_{\psi}[ru]&\\ 
}
\end{equation}

\begin{lemma} Let $E_i' = \mathbb{C}/\Lambda_i'$ 
and denote by 
$\Lambda_i:= 2\Lambda_i'$. Then $\psi$ corresponds to 
$$
H_1(D', \ZZ) \times \Lambda_1 \times \Lambda_2 \subset H_1(D', 
\ZZ) 
\times \Lambda_1' \times \Lambda_2'.
$$

In particular $E_i:= 
\mathbb{C}/\Lambda_i = E_i'$, $\psi$ 
restricted to $E_i$ is 
multiplication by 2, and
$\Alb(\hat{X}') = \Jac(D') 
\times E_1 \times E_2.$

\end{lemma}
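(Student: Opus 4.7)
The plan is to identify $\psi$ by computing the induced $\ZZ$-linear map $\psi_*$ on first integral homology, since both source and target are complex tori. By the homotopy equivalence $\hat{X}' \simeq \hat{X}$, $H_1(\hat{X}', \ZZ) \cong H_1(\hat{X}, \ZZ) = H_1(D, \ZZ) \oplus \Lambda_1 \oplus \Lambda_2$ is torsion-free of rank $14$, where $\Lambda_i := \pi_1(E_i)$ denotes the rank-$2$ $G$-stable summand coming from the $i$-th elliptic factor in the original Inoue model. Thus $\pi_1(\Alb(\hat{X}'))$ is canonically identified with this lattice, and the Schur-splitting argument of Step~1 of Theorem~\ref{special diagonal} promotes the $G$-equivariant decomposition to a product decomposition $\Alb(\hat{X}') = \Jac(D') \times A_1 \times A_2$ with $\pi_1(A_i) = \Lambda_i$.

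The Jacobi component of $\psi$ is induced by $\hat{X}' \to D'$, and on $\pi_1$ it is the product projection $\pi_5 \times \Lambda_1 \times \Lambda_2 \twoheadrightarrow \pi_5$; after abelianization it is the identity on $H_1(D',\ZZ)$ and zero on $\Lambda_1 \oplus \Lambda_2$. For the elliptic components, factor $\hat{X}' \to X_i' := \hat{X}'/H_i \to E_i'$; since $q(X_i')=1$ by Proposition~\ref{irreg}, we have $E_i'=\Alb(X_i')$ and $\Lambda_i' := \pi_1(E_i') = H_1(X_i',\ZZ)/\mathrm{Tors}$. One must therefore compute the image of $H_1(\hat{X}',\ZZ) \to \Lambda_i'$. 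Since the extension $1 \to \pi_1(\hat{X}') \to \pi_1(X_i') \to H_i \to 1$ together with the induced $H_i$-action on $\pi_1(\hat{X}')$ is determined by the group isomorphism $\pi_1(S') \cong \pi_1(S)$, the computation can be carried out in the Inoue setup and then transferred.

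Treat $i=1$ with $H_1 = \langle g_2,g_3,g_4,g_5\rangle$ (the case $i=2$ is symmetric). The explicit action shows that each generator of $H_1$ restricts on $E_1$ to a translation (by $0$, $\tfrac12$, $0$, or $\tfrac{\tau_1}{2}$), so $H_1$ acts trivially on $\Lambda_1$ by conjugation and $\Lambda_1$ is central in $\pi_1(X_1')$, injecting freely into $H_1(X_1',\ZZ)$. Conversely, $g_2$ acts as $-1$ on $\Lambda_2$ and $D/H_1 \cong \PP^1$ forces $H^0(D,\Omega^1)^{H_1}=0$, hence $H_1(D,\QQ)^{H_1}=0$; by Lemma~\ref{action} the images of $\Lambda_2$ and $H_1(D',\ZZ)$ in the torsion-free quotient $\Lambda_1'$ are zero. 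Choosing lifts $\gamma_j \in \pi_1(S')$ of the $g_j$ as in the proof of Theorem~\ref{homology}, the identities $g_3^2\colon z_1 \mapsto z_1+1$ and $g_5^2\colon z_1 \mapsto z_1+\tau_1$ translate group-theoretically into $\gamma_3^2 \equiv e_{1,1}$ and $\gamma_5^2 \equiv e_{1,2}$ in $\Lambda_1 \subset \pi_1(\hat{X}')$, where $\{e_{1,1},e_{1,2}\}=\{1,\tau_1\}$ is the standard basis. Passing to $\Lambda_1'$ yields $2[\gamma_3] = e_{1,1}$ and $2[\gamma_5] = e_{1,2}$; as $\Lambda_1'$ has rank $2q(X_1')=2$ and is torsion-free, $[\gamma_3]$ and $[\gamma_5]$ must equal $e_{1,1}/2$ and $e_{1,2}/2$, so $\Lambda_1' = \tfrac12 \Lambda_1$ and the image of $\Lambda_1$ is precisely $2\Lambda_1'$. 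An identical argument for $H_2$ gives $\Lambda_2 = 2\Lambda_2'$, which combined with the Jacobi component completes the identification of $\psi_*$ and proves the lemma.

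The main obstacle is ensuring that the Schur splitting of the source matches the product decomposition of the target — that is, that $\Lambda_j$ maps to zero in $\Lambda_i'$ for $j \neq i$. In our concrete setting this reduces to exhibiting, for each $i$, an element of $H_i$ acting as $-1$ on $\Lambda_j$ (for $j \neq i$) together with $D/H_i \cong \PP^1$; both are verified directly from the Inoue formulas and are homotopy invariants of the pair $(S,G)$.
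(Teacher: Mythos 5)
The paper itself states this lemma without any proof: immediately after the statement the text proceeds to apply Theorem \ref{special diagonal}, so there is no written argument to compare yours with. The implicit justification is precisely the explicit description of the $G$-action (compare Lemma \ref{genus1}: the quotient map factors through multiplication by $2$), and your proof reconstructs this in the natural way: reduce to a purely group-theoretic computation transported from the Inoue model via the given isomorphism of fundamental groups, kill $\Lambda_j$ ($j\neq i$) and $H_1(D',\ZZ)$ in $\Lambda_i'=H_1(X_i',\ZZ)/\Tors$ using an element of $H_i$ acting by $-1$ and $D/H_i\cong\PP^1$, and produce the generators of $\Lambda_i$ as squares of the lifts $\gamma_k$, consistently with the computation in the proof of Theorem \ref{homology}. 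This is surely the intended argument.

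Two steps need tightening. Write $j$ for the composite $\Lambda_1\subset H_1(\hat{X}',\ZZ)\ra\Lambda_1'$. First, ``$\Lambda_1$ is central in $\pi_1(X_1')$, injecting freely into $H_1(X_1',\ZZ)$'' is a non sequitur: a central subgroup may become torsion or even die in the abelianization (commutators can be central), so centrality alone gives no injectivity. Second, your relations $2[\gamma_3]=j(e_{1,1})$, $2[\gamma_5]=j(e_{1,2})$ only give the inclusion $j(\Lambda_1)\subseteq 2\Lambda_1'$, i.e.\ $\tfrac12 j(\Lambda_1)\subseteq\Lambda_1'$; the asserted equality $\Lambda_1'=\tfrac12 j(\Lambda_1)$ also needs the reverse inclusion. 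Both points are repaired by one observation: since each $g_k^2=1$ in $G$, every $\gamma_k^2$ lies in $\pi_1(\hat{X}')$, and since the images of $\Lambda_2$ and of $H_1(D',\ZZ)$ vanish in $\Lambda_1'$, we get $2[\gamma_k]\in j(\Lambda_1)$ for all $k$ (for $k=2,4$ the $\Lambda_1$-component of $\gamma_k^2$ even lies in $2\Lambda_1$, whence $[\gamma_k]\in j(\Lambda_1)$ by torsion-freeness). Thus $\Lambda_1'/j(\Lambda_1)$ is generated by the classes $[\gamma_k]$, each of order at most $2$, so $j(\Lambda_1)$ has finite index in $\Lambda_1'$; this forces $j|_{\Lambda_1}$ to be injective by a rank count, and gives $2\Lambda_1'\subseteq j(\Lambda_1)$, which combined with your inclusion yields $j(\Lambda_1)=2\Lambda_1'$ exactly. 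Finally, the citation of Lemma \ref{action} to dispose of $H_1(D',\ZZ)$ is not quite the right tool (that lemma needs a generating set of inverted elements); the clean argument is that this image factors through the coinvariants $H_1(D,\ZZ)_{H_1}$, which are finite because $H_1(D,\QQ)^{H_1}=H_1(D/H_1,\QQ)=0$, hence vanish in the torsion-free quotient $\Lambda_1'$. With these repairs your argument is complete.
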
 

 By theorem \ref{special diagonal} we have that
$\varphi \colon \hat{X}' \rightarrow 
 E_1 \times 
E_2 \times D' $ is a birational morphism onto its image 
$W'
\subset  E_1 \times 
E_2 \times D'  : = C_1 \times C_2 \times C_3$.

 In fact, since the fundamental group of $X'$ is isomorphic to the
one of a Inoue surface, it follows by lemma \ref{affine} that $G$ acts on $Z'$ as for an Inoue surface, hence
 there is no effective divisor $\De$ of 
numerical type $(1,1,2)$ which is invariant by the action of the group $G$, as it is easy to verify.

Therefore  $W'$ has homology class $2F_1 + 2F_2 
+ 4F_3$, where $F_i$ is the fibre over a point in the i-th curve, $C_i$
and $W'$ has rational double points as singularities.

The linear equivalence class of $W'$ is invariant for the group action. It is the sum of three classes $\xi_i$ which
are respective pull backs from the projection
onto the i-th curve $C_i$. Hence each  class $\xi_i$ is invariant for the action of $G$ on $C_i$,
hence $\xi_i$ is the pull back from the quotient of $C_i$ by the group $G_i$, projection of $G$ into the automorphism group
of $C_i$.

By our lemmas, \ref{genus1} and \ref{genus5}, these quotients are rational curves, hence
we conclude that the linear equivalence class of the divisor $W'$ is the same as the one for
an Inoue surface $S$.

It remains to show that $W'$ is given by Inoue's equations, i.e., if we consider the genus 5 curve $D'$ as a hypersurface 
$$
D' := \{ z_3,z_4)) \in E_3 \times E_4 \ |  \ \sL_3(z)\sL_4(z_4) =
b_3b_4\}, 
$$
then 
\begin{multline}
W' := \{ (z_1,z_2,z_3,z_4)) \in E_1 \times  \ldots \times E_4 \ | \ \sL_1(z_1)
\sL_2(z_2)\sL_3(z_3)  = b_1b_2b_3, \\
 \sL_3(z_3)\sL_4(z_4) =
b_3b_4\}.
\end{multline}
For this consider the subgroup $H:=\langle g_1,g_2,g_3,g_4 \rangle \leq G$. Then there is a $H$-invariant divisor with class $2F_1+2F_2 +4F_3$ in $Z':=E_1 \times E_2 \times D'$. Therefore $H^0(Z', \hol_Z'(2F_1+2F_2+4F_3)) \cong \CC^8$ is an $H$-module which decomposes by K\"unmeth's formula and the results in section \ref{section2} as follows:

\begin{multline}
H^0(Z', \hol_Z'(2F_1+2F_2+4F_3)) \cong \\
\cong H^0(E_1 \times E_2 \times D', p_1^*(\hol_{E_1}(2[0]))
\otimes p_2^*(\hol_{E_2}(2[0]))\otimes p_3^*(\hol_{D'}(2,0))) \cong \\
\cong V_1 \otimes V_2 \otimes V_3 \cong (V_1^{++++} \oplus V_1^{-+-+}) \otimes  (V_2^{++++} \oplus V_2^{--++}) \otimes (V_3^{++++} \oplus V_3^{+--+}) \cong \\
\cong V^{++++} \oplus V^{+--+} \oplus V^{--++} \oplus V^{-+-+},
\end{multline}
where each of the four summands in the last line is isomorphic to $\CC^2$. In fact, we have

$$V^{++++} = (V_1^{++++} \otimes V_2^{++++} \otimes V_3^{++++} ) \oplus (V_1^{-+-+} \otimes V_2^{--++} \otimes V_3^{+--+} ),
$$
$$V^{+--+} = (V_1^{++++} \otimes V_2^{++++} \otimes V_3^{+--+} ) \oplus (V_1^{-+-+} \otimes V_2^{--++} \otimes V_3^{++++} ),
$$
$$V^{--++} = (V_1^{++++} \otimes V_2^{--++} \otimes V_3^{++++} ) \oplus (V_1^{-+-+} \otimes V_2^{++++} \otimes V_3^{+--+} ),
$$
$$V^{-+-+} = (V_1^{-+-+} \otimes V_2^{++++} \otimes V_3^{++++} ) \oplus (V_1^{++++} \otimes V_2^{--++} \otimes V_3^{+--+} ).
$$
The equations of the hypersurfaces in the above pencils are then:
$$
W'_1(c) := \{ c = \sL_1(z_1) \sL_2(z_2) \sL_3(z_3) \},
$$
$$
W'_2(c) := \{ c = \sL_1(z_1) \sL_2(z_2) \frac{1}{\sL_3(z_3)} \},
$$
$$
W'_3(c) := \{ c = \sL_1(z_1) \frac{1}{\sL_2(z_2)} \sL_3(z_3) \},
$$
$$
W'_4(c) := \{ c = \frac{1}{\sL_1(z_1)} \sL_2(z_2) \sL_3(z_3) \}.
$$

This shows that after possibly replacing one of the elliptic curves $E_i$ with parameter $a_i$ by the elliptic curve $E'_i$ with parameter $\frac{1}{a}$ we can w.l.o.g. assume that the pencil of $H$-invariant hypersurfaces in $Z'$ is given by the equation $\{ c = \sL_1(z_1) \sL_2(z_2) \sL_3(z_3) \}$.

Now, we consider $g_5$. It is easy to see that $g_5(W'_1(c)) \equiv W'_1(c)$, and if $W'_1(c) = div(s)$ for $s \in V^{++++}$, then also $g_5(W'_1(c)) = div(s')$ for $s' \in V^{++++}$. Therefore $g_5$ is an involution on $\PP^1 := \PP(V^{++++})$, which is obviously non trivial,  whence $g_5$ has exactly two fixed points. Therefore there are exactly two $G$-invariant divisors in the pencil $W'_1(c)$. On the other hand,
$W'_1(b_1b_2b_3)$ and $W'_1(-b_1b_2b_3)$ are $G$-invariant. 

This shows that $W'$ is of the desired  form, hence $X'$ is the canonical model of an Inoue surface.

\end{proof}

\section{Inoue surfaces as bidouble 
covers and $H^1(S,\Theta_S)$}

The aim of this section is to show the 
following 
\begin{theo}
Let $S$ be an Inoue surface with $K_S^2 = 7$. 
Then:
$$
h^1(S, \Theta_S) = 4, \ \  h^2(S, \Theta_S) = 
8.
$$
\end{theo}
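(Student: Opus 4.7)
Since $S$ is of general type, $h^0(S,\Theta_S) = 0$, and Hirzebruch--Riemann--Roch applied to the tangent bundle of a surface gives
\[\chi(\Theta_S) \;=\; 2K_S^2 - 10\chi(\mathcal{O}_S) \;=\; 14 - 10 \;=\; 4.\]
Hence $h^2(\Theta_S) - h^1(\Theta_S) = 4$, so the whole statement is equivalent to the single equality $h^1(S, \Theta_S) = 4$.

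To compute $h^1(\Theta_S)$, the plan is to exploit the bidouble-cover realization $\pi : S \to Y$ onto a four-nodal cubic $Y$ set up in the present section, with Galois group $G' \cong (\ZZ/2\ZZ)^2$, branch divisors $D_1, D_2, D_3$ and associated line bundles $L_1, L_2, L_3$ satisfying $2L_i \sim D_j + D_k$ (with $\{i,j,k\} = \{1,2,3\}$). The standard structure theorem for smooth bidouble covers yields a $G'$-equivariant splitting of the form
\[\pi_* \Theta_S \;=\; \Theta_Y(-\log D)\;\oplus\;\bigoplus_{i=1}^{3}\Theta_Y\bigl(-\log(D_j+D_k)\bigr)\otimes L_i^{-1},\]
so that $H^1(S, \Theta_S)$ decomposes into four cohomology groups on $Y$: one $G'$-invariant piece plus three character contributions, each of which can be handled independently.

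I would then compute each of the four summands by pulling back everything to the minimal desingularisation $\tilde Y \to Y$ (which is rational, with four $(-2)$-curves as exceptional locus) and using Kodaira-type vanishing together with the explicit arithmetic of the three line bundles $L_i$ and their divisors. The invariant summand parametrises deformations of the configuration $(Y,D_1+D_2+D_3)$, and the three character summands parametrise the natural deformations of the bidouble cover; together they should produce a total dimension of $4$, matching the dimension of the moduli component proven earlier in Theorem \ref{main}. Then $h^2(\Theta_S) = h^1(\Theta_S) + 4 = 8$ follows immediately from the Riemann--Roch identity.

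The main obstacle I expect lies in the careful definition of $\Theta_Y(-\log D)$ and its twists near the four nodes of $Y$, where $\Theta_Y$ fails to be locally free: one has to either pass entirely to $\tilde Y$ and work with the proper transforms of the $D_i$ together with the exceptional $(-2)$-curves (so that all sheaves in play become locally free), or else identify the correct reflexive logarithmic tangent sheaf on $Y$ itself. Once this local bookkeeping is settled, the computation of each of the four summands reduces to cohomology of explicit line bundles on a rational surface and is routine.
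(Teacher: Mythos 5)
Your opening reduction is exactly the paper's: $h^0(\Theta_S)=0$ and $\chi(\Theta_S)=2K_S^2-10\chi(\mathcal{O}_S)=4$, so everything hinges on $h^1(S,\Theta_S)=4$ (this is how corollary \ref{cortheta} is organized). But the two computational steps you build on this contain genuine errors. First, the splitting you quote for $\pi_*\Theta$ is not the correct one: for a smooth bidouble cover with branch divisors $D_1,D_2,D_3$ and $2\mathcal{L}_i\equiv D_j+D_k$, the $\chi_i$-eigensheaf is $\Theta_Y(-\log D_i)\otimes\mathcal{L}_i^{-1}$, i.e.\ the logarithmic condition is along the branch divisor $D_i$ of the involution on which $\chi_i$ is trivial, not along $D_j+D_k$. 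A local computation at a general point of a branch curve shows this, and it is also what Serre duality against the identities (\ref{chars}) used in the paper ($h^2(\Theta)^i=h^0(\Omega^1_Y(\log D_i)(K_Y+\mathcal{L}_i))$) forces; with your sheaves the four Euler characteristics do not even add up to $\chi(\Theta)$ of the cover (the total first Chern class is off by $D_1+D_2+D_3$), so the count would come out wrong, not merely be harder. Second, the smooth bidouble cover of the smooth model (the paper's $Y$, your $\tilde{Y}$) branched on $D_1+D_2+D_3$ is not $S$ but $\tilde{S}$, the blow-up of $S$ in eight points; since $h^1(\Theta_{\tilde{S}})=h^1(\Theta_S)+16=20$, and the sixteen extra parameters are entirely non-invariant (each of the eight exceptional points is an isolated fixed point of an involution of $G$, acting by $-1$ on its tangent space), summing the four $H^1$-summands on the smooth model can never give $4$: the three character pieces of $H^1(\Theta_{\tilde{S}})$ sum to $16$. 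To compute $h^1(\Theta_S)$ directly along your lines you would have to work with reflexive sheaves on the nodal cubic, which is not set up in your sketch.

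The deeper gap is that the actual content of the theorem, the upper bound, is not addressed. The paper never computes the non-invariant $H^1$-pieces at all: Lemma \ref{inv} computes the invariant part exactly ($h^2(\Theta)^{inv}=0$ via injectivity of the Chern class map of $S_1,\dots,S_4$, hence $h^1(\Theta)^{inv}=4$ by an Euler characteristic computation), Proposition \ref{h1theta} bounds the three character pieces of $H^2(\Theta_S)$ by $2,3,3$ through delicate $h^0$-estimates for the rank-two sheaves $\Omega^1_Y(\log D_i)(K_Y+\mathcal{L}_i)$ (residue sequences, linear independence of Chern classes, Lemma 5.1 of \cite{burniat3}), and then one squeezes: $h^2\le 8$ together with $h^2-h^1=4$ gives $h^1\le 4$, while $h^1\ge h^1(\Theta)^{inv}=4$. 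Your proposed shortcut --- that the four summands ``should produce a total dimension of 4, matching the dimension of the moduli component proven earlier in Theorem \ref{main}'' --- is circular in this paper, since Theorem \ref{main}(2) is deduced from the present theorem; and in any case the $4$-parameter family of Remark \ref{ratvar} only yields the easy lower bound $h^1\ge 4$. The non-trivial content is precisely $h^2(\Theta_S)\le 8$, i.e.\ the estimates you label ``routine cohomology of explicit line bundles'', which in fact concern twisted logarithmic rank-two sheaves and occupy the bulk of the paper's Section 5.
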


To prove this result we resort to a result of 
\cite{mlp}, where   Inoue surfaces are constructed as bidouble covers of the 
four nodal cubic.

We briefly recall their description here, for 
details we refer to 
\cite{mlp}, example 4.1 (we keep their notation, even if slightly inconvenient).

We consider a complete 
quadrilateral $\Lambda$ in $\PP^2$  and 
denote the vertices by $P_1, \ldots,
P_6$.

We have labeled the 
vertices in a way that
\begin{itemize}
  \item the intersection point 
of the line $\overline{P_1P_2}$ and the line 
$\overline{P_3P_4}$ is $P_5$,
\item the 
intersection point of $\overline{P_1P_4}$ and $\overline{P_2P_3}$ is 
$P_6$.
\end{itemize}

Let $Y \rightarrow \PP^2$ be the blow up in 
$P_1, \ldots , P_6$, 
denote by $L$  the total transform of a line in 
$\PP^2$, let $E_i$, $1
\leq i \leq6$, be the exceptional curve lying over 
$P_i$.
Moreover, we denote by $S_i$, $1 \leq i \leq 4$, the strict 
transforms on $Y$ of the sides $S_i : = \overline{P_i P_{i+1}}$ for $ 1\leq i \leq 3$,
$S_4 : = \overline{P_4 P_{1}}$, of the quadrilateral $\Lambda$.

The geometry of the situation is that the four (-2) curves $S_i$ come from the resolution
of the 4 nodes of the cubic surface $\Sigma$ which is the anticanonical image of $Y$,
and the curves $E_i$ are the strict transforms of the 6 lines in $\Sigma$ connecting pairs of nodal points.

The surface $\Sigma$ contains also a triangle of lines (joining the midpoints of opposite edges
of the tetrahedron with sides the lines corresponding to the curves $E_i$. These are the 3 strict transforms 
 $\Delta_1$, $\Delta_2$,
$\Delta_3$ of the three diagonals of the complete quadrilateral $\Lambda$. $\Delta_1$
is the strict transform of $\overline{P_1P_3}$, $\Delta_2$ of $\overline{P_2P_4}$ and $\Delta_3$
of $\overline{P_5P_6}$.

For each line $\De_i$ in the cubic surface  $\Sigma$ we consider the pencil of planes containing
them, and the base point free pencil of residual conics, which we denote by $| f_i|$. Hence we have
$$| f_i| = |(-K_Y) -  \De_i| , \ \  \De_i + f_i \equiv (-K_Y) .$$

In the plane realization we have:
\begin{itemize}
  \item $f_1$ is the strict 
transform on $Y$ of a general element of
the pencil of conics $\Gamma_1$ through
$P_2,P_4,P_5,P_6$,
\item $f_2$ is the strict transform on $Y$ of a general element of
the pencil of conics $\Gamma_2$ through $P_1,P_3,P_5,P_6$,
\item $f_3$ is the strict transform on $Y$ of a general element of
the pencil of conics $\Gamma_3$ through $P_1,P_2,P_3,P_4$.
\end{itemize}

It is then easy to see that each curve $S_h$ is disjoint from the other curves $S_j$ ($j\neq h$), $\De_i$, and $f_i$
if $f_i$ is smooth. Moreover, 
$$\De_i \cdot f_i = 2,  \  \  \De_i \cdot f_j = 0, i \neq j, \ \ f_i ^2 = 0 , \ \  f_j f_i = 2,  i \neq j . $$

\begin{definition}\label{Inouedivisors}
   We define the {\em Inoue divisors} on $Y$ as follows:
\begin{itemize}
\item $D_1:= \Delta_1 + f_2 + S_1 + S_2$, where $f_2 \in |f_2|$ smooth;
\item $D_2:= \Delta_2 + f_3$, where $f_3 \in |f_3|$ smooth;
\item$D_3:= \Delta_3 + f_1 +f_1'+ S_3 + S_4$, where $f_1, f_1' \in
|f_1|$ smooth.
\end{itemize}

\end{definition} 

Let $\pi \colon \tilde{S} \rightarrow Y$ be the
bidouble covering with branch divisors $D_1, D_2, D_3$ (associated to the 
3 nontrivial elements of the Galois group.

 By the previous remarks we see
that over each $S_i$ there are two disjoint $(-1)$-curves.
Contracting these eight exceptional curves we obtain a minimal surface
with $p_g=0$ and $K_S^2 = 7$. By \cite{mlp} these are exactly the Inoue surfaces.

\begin{rem}\label{ratvar}
We immediately see that there is an open dense subset  in the product 
$$
|f_1| \times |f'_1| \times |f_2| \times |f_3| \cong (\PP^1)^4
$$
parametrizing the family of Inoue surfaces.
\end{rem}

\begin{rem}
   The non trivial character sheaves of this bidouble cover are
\begin{itemize}
   \item $\mathcal{L}_1 = \hol_Y( - K_Y + f_1 - E_4)$;
\item $\mathcal{L}_2 = \hol_Y( - 2 K_Y  - E_5 - E_6)$;
\item $\mathcal{L}_3 = \hol_Y( - K_Y + L - E_1 - E_2  - E_3)$.
\end{itemize}

\end{rem}

\begin{lemma}\label{inv} \
\begin{itemize}
    \item $\dim H^1(\tilde{S}, \Theta_{\tilde{S}})^{inv} = \dim H^1(S,
\Theta_S)^{inv} = 4$, 
\item  $\dim H^2(\tilde{S}, \Theta_{\tilde{S}})^{inv} = \dim H^2(S,
\Theta_S)^{inv} = 0$.
\end{itemize}
\end{lemma}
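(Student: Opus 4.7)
The plan is to identify the $G$-invariant cohomology of $\Theta_{\tilde S}$ with computable cohomology on the quotient base $Y$, then compare $\tilde S$ with its minimal model $S$. Since $\pi : \tilde S \to Y$ is a $(\ZZ/2\ZZ)^2$-Galois cover with (generically) simple normal crossing branch divisor $D := D_1 + D_2 + D_3$, a direct local computation in coordinates where the inertia acts by sign changes identifies the invariant part of the pushforward as $(\pi_* \Theta_{\tilde S})^{G} = \Theta_Y(-\log D)$. Consequently $H^i(\tilde S, \Theta_{\tilde S})^{\mathrm{inv}} = H^i(Y, \Theta_Y(-\log D))$, and the task reduces to a calculation on $Y$.

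I would then exploit the standard short exact sequence
$$
0 \to \Theta_Y(-\log D) \to \Theta_Y \to \bigoplus_{V \subset D} N_{V/Y} \to 0
$$
where $V$ runs over the irreducible components of $D$. The surface $Y$ (the blow-up of $\PP^2$ at six points of a complete quadrilateral) has finite automorphism group, so $h^0(\Theta_Y)=0$; $h^2(\Theta_Y)=0$ by successive blow-up from $\PP^2$; and $h^1(\Theta_Y) = 4$, the Kuranishi dimension of $Y$, matching the dimension of the moduli of six points in $\PP^2$ modulo $\mathrm{PGL}_3$. For the components of $D$: each $(-2)$-curve $S_i$ has $N_{S_i/Y} \cong \hol_{\PP^1}(-2)$, giving $(h^0,h^1) = (0,1)$; each $(-1)$-curve $\Delta_j$ contributes nothing in either degree; each pencil fiber $f_2, f_3, f_1, f_1'$ has $N_V \cong \hol_{\PP^1}$, giving $(h^0,h^1) = (1,0)$.

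Assembling the long exact sequence (and using $H^0(\Theta_Y) = 0$) yields
$$
0 \to \CC^4 \to H^1(Y, \Theta_Y(-\log D)) \to \CC^4 \xrightarrow{\alpha} \CC^4 \to H^2(Y, \Theta_Y(-\log D)) \to 0,
$$
and it suffices to check that $\alpha \colon H^1(\Theta_Y) \to \bigoplus_{i=1}^4 H^1(N_{S_i/Y})$ is an isomorphism. Geometrically, $\alpha$ records, for each side $S_i$ of the quadrilateral, the obstruction to preserving the collinearity of the three blown-up points on $S_i$, equivalently to keeping $S_i$ a $(-2)$-curve under an infinitesimal deformation of $Y$. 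Since the complete quadrilateral locus is a single $\mathrm{PGL}_3$-orbit inside the $4$-dimensional moduli of $6$-point configurations in $\PP^2$, cut out precisely by these four collinearity conditions, the conditions are independent and $\alpha$ is an isomorphism, so $h^1(Y, \Theta_Y(-\log D)) = 4$ and $h^2(Y, \Theta_Y(-\log D)) = 0$.

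Finally, I would descend from $\tilde S$ to the minimal model $S$. The morphism $\tilde S \to S$ contracts eight $(-1)$-curves forming four $G$-orbits, each a pair $\{E, E'\}$ over some $S_i$ which is swapped by two non-identity elements of $G$ and fixed pointwise by the third. After contraction, that third element acts on $S$ with two isolated fixed points $p, p'$ whose induced tangent action is $-\mathrm{Id}$ on both $T_pS$ and $T_{p'}S$. The $G$-invariant subspace of the 4-dimensional blow-up contribution $T_pS \oplus T_{p'}S$ consequently vanishes for each pair, so $H^i(S, \Theta_S)^{\mathrm{inv}} = H^i(\tilde S, \Theta_{\tilde S})^{\mathrm{inv}}$ for $i = 1, 2$, finishing the proof. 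The main obstacle is the isomorphism for $\alpha$, which requires identifying the connecting map with the geometric obstruction to preserving the quadrilateral structure and directly checking that the four collinearity conditions are linearly independent at our configuration; the rest is formal cohomological bookkeeping.
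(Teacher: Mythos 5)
Your argument is correct, but it follows a route that is (precisely) Serre--dual to the paper's and uses a different input at the key step, so a comparison is worthwhile. The paper identifies $h^2(\tilde{S},\Theta_{\tilde{S}})^{inv}$ with $h^0(Y,\Omega^1_Y(\log D_1,\log D_2,\log D_3)(K_Y))$, kills this group via the residue sequence twisted by $K_Y$ together with the injectivity of the coboundary $\delta$ (linear independence of the Chern classes of $S_1,\dots,S_4$, after \cite{catjdg}), and then obtains $h^1=4$ from $\chi(\Omega^1_Y(\log D_1,\log D_2,\log D_3)(K_Y))=-4$. You work instead with $(\pi_*\Theta_{\tilde{S}})^{G}\cong\Theta_Y(-\log(D_1+D_2+D_3))$ and the normal--bundle sequence, reducing both dimensions to the claim that $\alpha\colon H^1(Y,\Theta_Y)\rightarrow\oplus_{i=1}^{4}H^1(N_{S_i/Y})$ is bijective; since Serre duality on $S_i$ gives $H^0(\hol_{S_i}(K_Y))^{\vee}\cong H^1(N_{S_i/Y})$, your $\alpha$ is exactly the dual of the paper's $\delta$ on the relevant summands, but you prove the statement through the modular interpretation (independence of the four collinearity conditions on the six points) rather than through intersection numbers. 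Your route has two bonuses: the copy of $H^0(\oplus_V N_{V/Y})\cong\CC^4$ inside $H^1(Y,\Theta_Y(-\log(D_1+D_2+D_3)))$ exhibits the invariant deformations as the motions of $f_1,f_1',f_2,f_3$ in their pencils, in accordance with remark \ref{ratvar}; and you make explicit the passage from $\tilde{S}$ to the minimal model $S$ (the involution of $G$ preserving each contracted pair of $(-1)$-curves acts as $-\id$ on the tangent spaces at the two image points, so the skyscraper term has no invariants), which the paper absorbs into the equalities (\ref{chars}) without comment. Two minor cautions: the branch divisor $D_1+D_2+D_3$ must be normal crossing everywhere (true here, by the listed intersection numbers), not only generically, both for the identification of the invariant pushforward and for the cokernel of $\Theta_Y(-\log)\subset\Theta_Y$ to be $\oplus_V N_{V/Y}$; and ``single orbit cut out by four conditions'' is only a set--theoretic codimension count, which does not by itself give independence of the differentials of the collinearity functions --- as you yourself note, this must be checked directly, and it is immediate (move $P_5$ along $S_3$ transversally to $S_1$ and conversely, and likewise $P_6$ with respect to $S_2$, $S_4$), yielding surjectivity, hence bijectivity, of $\alpha$.
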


\begin{proof}
It is well known (cf. e.g.  \cite{cime2}) that  $H^2(\tilde{S}, \Theta_{\tilde{S}})$ of the $(\ZZ/2\ZZ)^2$-covering  $\pi \colon \tilde{S} \ra Y$ decomposes as a direct sum of character spaces
$$
H^2(\tilde{S}, \Theta_{\tilde{S}}) \cong H^2(\tilde{S}, \Theta_{\tilde{S}})^{inv} \oplus \bigoplus_{i=1}^3 H^2(\tilde{S}, \Theta_{\tilde{S}})^i,
$$
and that the dimensions of the direct summands can be computed as the dimensions of global sections of logarithmic differential forms on the basis $Y$. In fact, we have:
\begin{multline}\label{chars}
h^0(Y, \Omega^1_Y(\log D_1, \log D_2, \log D_3)(K_Y)) = h^2(\tilde{S},\Theta_{\tilde{S}})^{inv} = h^2(S,\Theta_S)^{inv}; \\
h^0(Y, \Omega^1_Y(\log D_i)(K_Y + \mathcal{L}_i))) = h^2(\tilde{S},\Theta_{\tilde{S}})^i = h^2(S,\Theta_S)^i, \forall i \in \{1,2,3\}.
\end{multline}
Note that $|-K_{Y}| = |3L - \sum_{i=1}^6 E_i|$ 
is non empty 
and does not have a fixed part. Therefore there is a 
morphism
$\hol_{Y}(K_{Y}) \ra \hol_{Y}$, 
which is not 
identically zero on any component of the 
$D_i$'s.

We get the 
commutative diagram with exact 
rows
\begin{equation}\label{diagram2}
\xymatrix{ 0\ar[r]& 
\Omega^1_{Y}(K_{Y}) \ar[d] 
\ar[r]&\Omega^1_{Y}((\log D_1, \log D_2, \log D_3)(K_{Y})\ar[d]\ar[r]
&\oplus_{i=1}^3 
\hol_{D_i}(K_{Y}) \ar[d] \ar [r]&0\\ 
0\ar[r]&\Omega^1_{Y} \ar[r]&\Omega^1_{Y}((\log D_1, \log D_2, \log D_3) 
\ar[r] &\oplus_{i=1}^3 \hol_{D_i}  \ar [r]&0.}
\end{equation}

 From 
this we get the commutative 
diagram with injective vertical arrows
\begin{equation*}
\xymatrix{
\CC^2 \oplus 0 \oplus\CC^2 
\cong&H^0(Y,\oplus_{i=1}^3
\hol_{D_i}(K_{Y}))\ar[r]^{\delta}\ar[d]^{\psi_2}\ar[dr]^{\varphi}& 
H^1(Y,
\Omega^1_{Y}(K_{Y})) \ar[d] \\
\CC^4 
\oplus \CC^2 \oplus\CC^5 \cong &H^0(Y,\oplus_{i=1}^3 
\hol_{D_i})\ar[r]^{\psi_1} &H^1(Y,
\Omega^1_{Y}) 
.}
\end{equation*} 

A usual argument shows that (see \cite{catjdg}) $\delta$ is injective. In fact, the Chern classes of $S_1,S_2,S_3,S_4$ are linearly independent, hence  $\varphi$ is injective, which implies that also
$\delta$ is 
injective. Therefore
$$ h^0(\Omega^1_{Y}((\log D_1, \log D_2, \log D_3)(K_{Y})) 
= 
h^2(\Theta_{\tilde{S}})^{inv} = h^2(\Theta_S)^{inv} = 
0.
$$

Therefore
\begin{multline}
  h^1(\Theta_{\tilde{S}})^{inv} = - 
\chi(\Omega^1_{Y}((\log D_1, \log D_2, \log D_3)(K_{Y})) = \\
=
-(\chi(\Omega^1_{Y}(K_{Y}) + \chi(\oplus_{i=1}^3 
\hol_{D_i}(K_{Y}))).
\end{multline}

An easy calculation shows now that 
$\chi(\oplus_{i=1}^3 
\hol_{D_i}(K_{Y}))) = 0$, 
whereas
$\chi(\Omega^1_{Y}(K_{Y})) = - 4$.
\end{proof}

We prove now

\begin{prop}\label{h1theta}  \
\begin{enumerate}
\item $h^0(Y, \Omega^1_Y(\log D_1)(K_Y + \mathcal{L}_1)) = 
h^2(\tilde{S}, \Theta_{\tilde{S}})^1 = h^2(S, \Theta_S)^1 \leq 2$;
\item $h^0(Y, \Omega^1_Y(\log D_2)(K_Y + \mathcal{L}_2)) = 
h^2(\tilde{S}, \Theta_{\tilde{S}})^2 = h^2(S, \Theta_S)^2 \leq 3$;
\item $h^0(Y, \Omega^1_Y(\log D_3)(K_Y + \mathcal{L}_3)) = 
h^2(\tilde{S}, \Theta_{\tilde{S}})^3 = h^2(S, \Theta_S)^3 \leq 3$.
\end{enumerate}
In particular, we get $h^2(S,\Theta_S) \leq 8$.
\end{prop}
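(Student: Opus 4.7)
The plan is to analyze each of the three assertions via the logarithmic residue sequence for the pair $(Y, D_i)$, tensored with $K_Y + \mathcal{L}_i$:
\[
0 \to \Omega^1_Y(K_Y + \mathcal{L}_i) \to \Omega^1_Y(\log D_i)(K_Y + \mathcal{L}_i) \to \bigoplus_{C \subset D_i} \hol_C\bigl((K_Y + \mathcal{L}_i)|_C\bigr) \to 0,
\]
where $C$ runs over the irreducible components of $D_i$. Passing to global sections yields
\[
h^0\bigl(\Omega^1_Y(\log D_i)(K_Y+\mathcal{L}_i)\bigr) \leq h^0\bigl(\Omega^1_Y(K_Y+\mathcal{L}_i)\bigr) + \sum_C h^0\bigl(C,(K_Y+\mathcal{L}_i)|_C\bigr),
\]
so it suffices to show that the first term on the right vanishes and to bound each component contribution separately.

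From the given formulas for the character sheaves one reads off $K_Y + \mathcal{L}_1 = f_1 - E_4$, $K_Y + \mathcal{L}_2 = -K_Y - E_5 - E_6$, and $K_Y + \mathcal{L}_3 = L - E_1 - E_2 - E_3$. The first step is to establish $H^0(Y, \Omega^1_Y(K_Y+\mathcal{L}_i)) = 0$ for $i = 1,2,3$. Since none of these twists is ample or even nef, the standard Akizuki--Nakano vanishing does not apply directly; instead, one pulls back through the blow-down $\sigma: Y \to \PP^2$ and uses the conormal sequence for the iterated blow-up together with Bott's vanishing $H^0(\PP^2, \Omega^1_{\PP^2}(k)) = 0$ for $k \leq 1$, analyzing the contribution from each exceptional divisor $E_j$ in turn. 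This is the technically delicate step, since the Chern class of the twist must be tracked carefully through each blow-up.

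For the second term, every irreducible component $C$ of $D_i$ is a smooth rational curve (a $\Delta_j$, an $f_j$, or an $S_h$), so $h^0(C,(K_Y+\mathcal{L}_i)|_C) = \max\{0, d_C + 1\}$ with $d_C := (K_Y+\mathcal{L}_i) \cdot C$. Using the intersection pairing on $Y$ together with the class decompositions $\Delta_1 = L - E_1 - E_3$, $\Delta_2 = L - E_2 - E_4$, $\Delta_3 = L - E_5 - E_6$, $f_j = -K_Y - \Delta_j$, and the fact that $S_1, \ldots, S_4$ are pairwise disjoint $(-2)$-curves whose intersections with the $\Delta_j$ and $f_j$ are dictated by the quadrilateral configuration, each $d_C$ is a direct intersection-number computation. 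Summing over the components of $D_1 = \Delta_1 + f_2 + S_1 + S_2$, $D_2 = \Delta_2 + f_3$, and $D_3 = \Delta_3 + f_1 + f_1' + S_3 + S_4$ gives the asserted upper bounds $2$, $3$, and $3$. Combining with $h^2(S,\Theta_S)^{\mathrm{inv}} = 0$ from Lemma \ref{inv}, we conclude $h^2(S,\Theta_S) \leq 8$.
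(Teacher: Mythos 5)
Your plan fails at the decisive numerical step: the residue sequence you propose gives a bound that is far weaker than the one asserted. In case (1) the twist is $K_Y+\mathcal{L}_1\equiv f_1-E_4$, and with the very classes you list ($\Delta_1=L-E_1-E_3$, $f_2=2L-E_1-E_3-E_5-E_6$, $S_1=L-E_1-E_2-E_5$, $S_2=L-E_2-E_3-E_6$) one computes $(f_1-E_4)\cdot\Delta_1=2$, $(f_1-E_4)\cdot f_2=2$, $(f_1-E_4)\cdot S_1=(f_1-E_4)\cdot S_2=0$, so your component term is $3+3+1+1=8$, not $2$. Likewise in case (2) the components of $D_2=\Delta_2+f_3$ give degrees $1$ and $2$, hence a contribution of $2+3=5>3$, and in case (3) the five components of $D_3$ give $2+2+2+1+1=8>3$. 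So even if one grants the vanishing $H^0(Y,\Omega^1_Y(K_Y+\mathcal{L}_i))=0$ (which you only sketch and do not prove), your inequality cannot yield $2,3,3$, and hence cannot yield $h^2(S,\Theta_S)\le 8$; summing over all components of $D_i$ with the positive twist in place is simply the wrong place to break the sheaf.

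The paper's argument is built precisely to avoid this. In case (1) it exploits that the twist $f_1-E_4$ is an effective curve $f_1$ (not a component of $D_1$) minus something negative: it adds $f_1$ as an \emph{extra} logarithmic pole and restricts only along $f_1$, where the quotient is $\Omega^1_{f_1}(\Delta_1+f_2+S_1+S_2+f_1-E_4)\cong\hol_{\PP^1}(1)$, contributing only $2$; the remaining term $h^0\bigl(\Omega^1_Y(\log\Delta_1,\log f_2,\log S_1,\log S_2,\log f_1)(-E_4)\bigr)$ is killed because, by the untwisted residue sequence and the result of \cite{catjdg}, it is bounded by the corank of the map sending the five components to their Chern classes in $H^1(Y,\Omega^1_Y)$, and these five classes are linearly independent. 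In cases (2) and (3) a further ingredient is needed which your proposal lacks entirely: Lemma 5.1 of \cite{burniat3}, applied using $(K_Y+2\Delta_i+(K_Y+\mathcal{L}_i))\cdot\Delta_i<0$, removes the log pole along $\Delta_i$ at the price of twisting by $\Delta_i$; after this the twist becomes an effective sum of $S_h$'s and $E_j$'s, and the same device (add those curves as log poles, restrict only to the one or two curves of positive degree, then count independent Chern classes) produces the bounds $3$ and $3$. Without these two ideas — the trade-off lemma for $\Delta_i$ and the ``absorb the effective twist into the log poles'' step — the estimate you are aiming for is out of reach.
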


From this result, we easily obtain the following:
\begin{cor}\label{cortheta}
Let $S$ be an Inoue surface with $K_S^2 =7$, $p_g=0$. Then:
$$
h^2(S, \Theta_S) = 8, \ \ h^1(S, \Theta_S)= h^1(S, \Theta_S)^{inv} = 4.
$$

\end{cor}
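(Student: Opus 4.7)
The plan is to combine the character-wise bounds from Lemma~\ref{inv} and Proposition~\ref{h1theta} with Hirzebruch--Riemann--Roch to force equality in all the estimates simultaneously.

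First I would compute $\chi(\Theta_S)$ using the standard Riemann--Roch formula for the tangent bundle of a surface, namely
$$\chi(\Theta_S) = 2K_S^2 - 10\chi(\hol_S).$$
By Lemma~\ref{invariants}, $K_S^2 = 7$ and $\chi(\hol_S) = 1$, so $\chi(\Theta_S) = 4$. Since $S$ is a minimal surface of general type we have $h^0(S, \Theta_S) = 0$ (no nonzero global holomorphic vector fields on a surface of general type), and consequently
$$h^2(S, \Theta_S) - h^1(S, \Theta_S) = 4.$$

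Second, I would exploit the $(\ZZ/2\ZZ)^2$-character decomposition of the cohomology of $\Theta_S$ arising from the bidouble cover structure, which descends from $\tilde S$ to $S$. Lemma~\ref{inv} supplies $h^1(S, \Theta_S)^{inv} = 4$ and $h^2(S, \Theta_S)^{inv} = 0$, while Proposition~\ref{h1theta} bounds $h^2(S, \Theta_S)^i$ by $2, 3, 3$ for $i = 1, 2, 3$ respectively. Summing over all four characters yields the global upper bound
$$h^2(S, \Theta_S) \leq 0 + 2 + 3 + 3 = 8,$$
while the invariant part of $h^1$ gives the lower bound $h^1(S, \Theta_S) \geq 4$.

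Chaining these inequalities,
$$8 \geq h^2(S, \Theta_S) = h^1(S, \Theta_S) + 4 \geq 8,$$
so equality holds throughout, yielding $h^1(S, \Theta_S) = 4$ and $h^2(S, \Theta_S) = 8$. Furthermore, since $h^1(S, \Theta_S) = h^1(S, \Theta_S)^{inv}$ and the non-invariant contributions $h^1(S, \Theta_S)^i$ are nonnegative, they must all vanish, giving the claimed identification. The real substance of the corollary is carried by Lemma~\ref{inv} and Proposition~\ref{h1theta}; once those character bounds are available the deduction is a purely numerical consequence of Riemann--Roch and the vanishing of global holomorphic vector fields, so there is no significant additional obstacle at this step.
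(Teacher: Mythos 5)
Your argument is correct and is essentially the paper's own proof: the same Riemann--Roch computation $\chi(\Theta_S)=2K_S^2-10\chi(\hol_S)=4$, combined with the upper bound $h^2(S,\Theta_S)\leq 8$ from Proposition~\ref{h1theta} and the lower bound $h^1(S,\Theta_S)\geq h^1(S,\Theta_S)^{inv}=4$ from Lemma~\ref{inv}, forcing equality throughout. The only difference is cosmetic: you make explicit the vanishing $h^0(S,\Theta_S)=0$ for a surface of general type, which the paper uses implicitly when writing $h^2-h^1=\chi(\Theta_S)$.
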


\begin{proof}[Proof of the cor..]
We have by proposition \ref{h1theta} 
$$
8 - h^1(S,\Theta_S) \geq h^2(S, \Theta_S) - h^1(S,\Theta_S) = \chi(\Theta_S) = 2K_S^2 - 10 \chi(S) = 4,
$$
whence $h^1(S, \Theta_S) \leq 4$. On the other hand by lemma \ref{inv} we know that
$$
4 = h^1(S, \Theta_S)^{inv} \leq h^1(S, \Theta_S)
$$
and the assertions of the corollary follow.
\end{proof}
\begin{proof}[Proof of proposition \ref{h1theta}.]
1) Recall that by definition \ref{Inouedivisors} $D_1$ is the 
disjoint union of the four curves $\Delta_1, f_2, S_1, S_2$, and
$$
K_Y + \mathcal{L}_1 \equiv  f_1 -E_4.
$$

We consider the exact sequence (cf. e.g.  \cite{esnaultviehweg}, p. 13)
\begin{multline}\label{exseq}
0 \ra \Omega^1_Y(\log \Delta_1, \log f_2, \log S_1, \log S_2, \log f_1)(-E_4) \ra \\
\ra \Omega^1_Y((\log \Delta_1, \log f_2, \log S_1, \log S_2)(f_1-E_4)) \ra \\
\ra \Omega^1_{f_1} (\Delta_1 +f_2 + S_1+S_2+f_1-E_4) \ra 0.
\end{multline}

Since
$$
(\Delta_1 +f_2 + S_1+S_2+f_1-E_4)f_1 = 2+2-1,
$$
we have $ \Omega^1_{f_1} (\Delta_1 +f_2 + S_1+S_2+f_1-E_4) \cong \hol_{\PP^1}(1)$, whence 
\begin{multline}
h^0(Y, \Omega^1_Y(\log D_1)(K_Y+ \mathcal{L}_1)) = h^0(Y, 
\Omega^1_Y((\log \Delta_1, \log f_2, \log S_1, \log S_2)(f_1-E_4)) \leq \\
 \leq h^0(Y, \Omega^1_Y(\log \Delta_1, \log f_2, \log S_1, \log S_2, \log f_1)(-E_4)) + 2.
\end{multline}
Consider the long exact cohomology sequence of the short exact sequence

\begin{multline}
0 \ra \Omega^1_Y \ra  \Omega^1_Y(\log \Delta_1, \log f_2, \log S_1, \log S_2, \log f_1) \ra \\
\ra  \hol_{f_1} \oplus \hol_{f_2} \oplus \hol_{S_1} \oplus \hol_{S_2} 
\oplus \hol_{\Delta_1} \ra 0
\end{multline}

Since $H^0(Y, \Omega^1_Y)=0$, 
$H^0(Y, \Omega^1_Y(\log \Delta_1, \log f_2, \log S_1, \log S_2, \log f_1))$ is the kernel of the connecting 
homomorphism
$$
\delta \colon H^0(Y, \hol_{f_1} \oplus \hol_{f_2} \oplus \hol_{S_1} \oplus \hol_{S_2} 
\oplus \hol_{\Delta_1}) \ra H^1(Y, \Omega^1_Y).
$$
By \cite{catjdg} the image of $\delta$ is generated by the Chern 
classes of $ \Delta_1, f_2,  S_1, S_2, f_1$.

\begin{claim}
$\dim \im (\delta) = 5$.
\end{claim}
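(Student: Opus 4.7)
The plan is to recast the claim as a linear independence statement in $\operatorname{NS}(Y) \otimes \CC$ and settle it by a direct computation in the Picard group of the weak del Pezzo surface $Y$.

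First I would invoke the result from \cite{catjdg} already cited in the text just above the claim, which identifies the connecting map $\delta$ with the first Chern class map: the image of the generator of $H^0(\hol_C)$ for a smooth irreducible curve $C \subset Y$ is the class $c_1(\hol_Y(C)) \in H^{1,1}(Y)$. Consequently
\[
\im(\delta) \;=\; \langle\, [\Delta_1],\ [f_2],\ [S_1],\ [S_2],\ [f_1]\, \rangle \;\subset\; H^1(Y, \Omega^1_Y),
\]
and the claim is equivalent to the assertion that these five classes are $\CC$-linearly independent in $\operatorname{Pic}(Y) \otimes \CC$.

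Second, since $Y$ is the blow-up of $\PP^2$ at the six vertices $P_1, \dots, P_6$ of the complete quadrilateral, $\operatorname{Pic}(Y)$ is free of rank $7$ with basis $L, E_1, \dots, E_6$. From the incidences recalled at the start of the section---the diagonal $\overline{P_1P_3}$ contains no other vertex; the side $\overline{P_1P_2}$ passes in addition through $P_5$ and the side $\overline{P_2P_3}$ through $P_6$; each pencil $|f_k|$ consists of conics through the four vertices not lying on the corresponding diagonal $\Delta_k$---I would read off the classes
\begin{align*}
\Delta_1 &\equiv L - E_1 - E_3, & S_1 &\equiv L - E_1 - E_2 - E_5,\\
S_2 &\equiv L - E_2 - E_3 - E_6, & f_1 &\equiv 2L - E_2 - E_4 - E_5 - E_6,\\
f_2 &\equiv 2L - E_1 - E_3 - E_5 - E_6. & &
\end{align*}

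The final step is then a pure linear algebra check on a $5 \times 7$ integer matrix. The cleanest way is to observe that $f_1$ is the unique class among the five with nonzero $E_4$-coefficient, so independence reduces to the independence of $\Delta_1, S_1, S_2, f_2$, which is immediate from a single nonvanishing $4 \times 4$ minor (for instance the one in the columns indexed by $E_1, E_2, E_5, E_6$ has determinant $-2$). I do not foresee any genuine obstacle; the only point deserving care is the correct bookkeeping of which of the six blown-up points lies on each of the five curves, which is precisely what was prepared in the combinatorial setup of the complete quadrilateral at the beginning of the section.
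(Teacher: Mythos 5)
Your proposal is correct and follows essentially the same route as the paper: both reduce the claim, via the cited result of \cite{catjdg}, to the linear independence of the Chern classes of $\Delta_1, f_2, S_1, S_2, f_1$ in $H^1(Y,\Omega^1_Y)$, and your explicit classes in the basis $L, E_1,\dots,E_6$ (and the resulting nonzero minor) are right. The paper verifies the same independence by pairing a putative relation against $E_4$, $S_1$, $S_2$ and $E_5$, which is just the intersection-form version of your coordinate computation.
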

\begin{proof}[Proof of the claim.] 
Assume that
\begin{equation}\label{zero1}
\lambda_1 f_1 + \lambda_2 f_2 + a_1 S_1 + a_2 S_2 + \mu \Delta_1\equiv 0,
\end{equation}
where $\lambda_1, \lambda_2, a_1, a_2, \mu \in \CC$.
Intersection with $E_4$ gives $\lambda_1 =0$, whereas intersection with $S_i$, $i=1,2$ yells $-2a_i=0$. The equation \ref{zero1} has become
$$
\mu \Delta_1 + \lambda_2 f_2 =0.
$$

Intersection with e.g. $E_5$ gives $\lambda_2 =0$, whence also $\mu = 0$.
\end{proof}

It follows now that
$$
h^0(Y, \Omega^1_Y(\log \Delta_1, \log f_2, \log S_1, \log S_2, \log f_1)(-E_4)) \leq 
$$
$$
\leq h^0(Y, \Omega^1_Y(\log \Delta_1, \log f_2, \log S_1, \log S_2, \log f_1)) = 0.
$$

Therefore we have proven
$$
h^0(Y, \Omega^1_Y(\log D_1)(K_Y+ \mathcal{L}_1)) \leq 2.
$$
2) By definition \ref{Inouedivisors} $D_2$ is the disjoint union of 
the two curves $\Delta_2, f_3$, and
$$
K_Y + \mathcal{L}_2 \equiv  -K_Y -E_5-E_6.
$$
Since
$$
(K_Y + 2 \Delta_2 +(- K_Y -E_5-E_6)) \Delta_2 = (2 \Delta_2  -E_5-E_6) \Delta_2 -2 < 0,
$$
by  lemma 5.1 of  \cite{burniat3} we have
$$
H^0(Y, \Omega^1_Y(\log D_2)(K_Y+ \mathcal{L}_2)) \cong H^0(Y, 
\Omega^1_Y(\log f_3)(K_Y + \mathcal{L}_2 + \Delta_2)).
$$
Note that
$$
K_Y + \mathcal{L}_2 + \Delta_2 \equiv -K_Y -E_5 - E_6 + \Delta_2 
\equiv S_1 + S_2 +S_4 + S_3 + E_1 + E_3.
$$

Again by lemma 5.1 in \cite{burniat3} we see that
\begin{multline}
H^0(Y, \Omega^1_Y(\log f_3)(K_Y + \mathcal{L}_2 + \Delta_2)) \cong \\
\cong H^0(Y, \Omega^1_Y(\log f_3, \log S_1, \log S_2, \log S_3, \log 
S_4)(E_1 + E_3)).
\end{multline}
Since $E_1(f_3 + S_1 +S_2 + S_3 +S_4 + E_1 +E_3) = E_3(f_3 + S_1 +S_2 
+ S_3 +S_4 + E_1 +E_3) = 2$, we see by the same arguments (using the analogous exact sequence \ref{exseq}) as in case 
1) that

\begin{multline}
h^0(Y, \Omega^1_Y(\log f_3, \log S_1, \log S_2, \log S_3, \log 
S_4)(E_1 + E_3)) \leq \\
\leq h^0(Y, \Omega^1_Y(\log f_3, \log S_1, \log S_2, \log S_3, \log 
S_4, \log E_1, \log E_3)) +2.
\end{multline}
\begin{claim}
$\dim \langle f_3, S_1, \ldots , S_4, E_1, E_3 \rangle =6$.
\end{claim}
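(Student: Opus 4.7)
The plan is to follow the same intersection-theoretic template used in the earlier claim (where $\dim \langle f_1, f_2, S_1, S_2, \Delta_1\rangle = 5$ was established). Write a putative linear relation
\[
\alpha f_3 + \beta_1 S_1 + \beta_2 S_2 + \beta_3 S_3 + \beta_4 S_4 + \gamma_1 E_1 + \gamma_3 E_3 \equiv 0
\]
in $\mathrm{NS}(Y)\otimes \QQ$, and show that the space of such relations is exactly one-dimensional; together with the fact that there are $7$ classes listed, this yields $\dim = 6$.

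First I would record the incidences: since $P_5=\overline{P_1P_2}\cap\overline{P_3P_4}$ and $P_6=\overline{P_2P_3}\cap\overline{P_4P_1}$, one has $S_1\cdot E_5 = S_3\cdot E_5 = 1$ and $S_2\cdot E_6 = S_4\cdot E_6 = 1$, while all other pairings between the $S_i$ and $E_5,E_6$ vanish; moreover $f_3$, being the strict transform of a conic through $P_1,P_2,P_3,P_4$, is disjoint from $E_5,E_6$ and meets each of $E_1,E_2,E_3,E_4$ transversally in one point. Intersecting the hypothetical relation with $E_5$ and with $E_6$ gives $\beta_1+\beta_3 = 0$ and $\beta_2+\beta_4 = 0$. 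Intersecting with $E_2$ and with $E_4$ (which meet only $f_3$ and the two $S_i$'s containing the corresponding $P_k$) yields $\alpha+\beta_1+\beta_2 = 0$ and $\alpha+\beta_3+\beta_4 = 0$; combining these four equations forces $\alpha=0$ and $\beta_2=-\beta_1$, $\beta_3=-\beta_1$, $\beta_4=\beta_1$.

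Next, intersecting with $E_1$ (remembering $E_1^2=-1$, and that $E_1$ meets $f_3$, $S_1$, $S_4$) produces $\alpha+\beta_1+\beta_4-\gamma_1=0$, hence $\gamma_1 = 2\beta_1$; symmetrically $E_3$ gives $\gamma_3 = -2\beta_1$. Thus every linear relation is a scalar multiple of
\[
S_1 - S_2 - S_3 + S_4 + 2E_1 - 2E_3.
\]
Writing $S_1 = L-E_1-E_2-E_5$, $S_2 = L-E_2-E_3-E_6$, $S_3 = L-E_3-E_4-E_5$, $S_4 = L-E_4-E_1-E_6$ in the standard basis $L,E_1,\ldots,E_6$ of $\mathrm{Pic}(Y)$, a direct substitution verifies that this combination is indeed zero in $\mathrm{Pic}(Y)$, so the relation space is exactly one-dimensional and the claim follows.

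The computation is entirely routine; the only place where care is needed is bookkeeping of the incidence data $P_5\in \overline{P_1P_2}\cap\overline{P_3P_4}$ and $P_6\in\overline{P_2P_3}\cap\overline{P_4P_1}$ (which controls which $S_i$ meet $E_5$ versus $E_6$) and the sign conventions for $E_i^2=-1$ when passing from $H^2$ to the basis expansion.
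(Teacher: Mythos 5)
Your proof is correct and follows essentially the same intersection-theoretic route as the paper: you pin down any putative relation by pairing with suitable curves on $Y$ and identify the unique relation $S_1-S_2-S_3+S_4+2E_1-2E_3\equiv 0$, which is exactly the relation the paper exhibits before checking independence of the remaining six classes. The only difference is organizational (you solve for the full relation space at once, while the paper first quotes the relation and then verifies independence of $f_3,S_1,\dots,S_4,E_1$), and all your incidence data and signs check out.
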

\begin{proof}[Proof of the claim.]
Note that 
$$
S_1+S_4-S_2-S_3 \equiv -2E_1 + 2E_3.
$$
Therefore we have proven the claim, if we show that the Chern classes of $f_3, S_1, \ldots , S_4, E_1$ are linearly independent.

Assume that 
\begin{equation}\label{zero2}
\lambda f_3 + a_1 S_1 + a_2 S_2 + a_3 S_3 +a_4 S_4 + \mu E_1\equiv 0,
\end{equation}
where $\lambda, a_1, a_2, a_3,a_4, \mu \in \CC$.

Intersection with $S_2, S_3$ gives $a_2 = a_3 =0$, whereas intersection with $\Delta_1$ yields $\mu = 0$. We are left with the equation $\lambda f_3 + a_1 S_1 +a_4 S_4 \equiv 0$. Intersection with $E_5$ resp. $E_6$ implies that $a_1=0$ resp. $a_4=0$, and we conclude that also $\lambda=0$.

\end{proof}
Therefore we get
$$
h^0(Y, \Omega^1_Y(\log f_3, \log S_1, \log S_2, \log S_3, \log S_4, 
\log E_1, \log E_3)) = 1,
$$
and we have shown 2).

3)  $D_3$ is the disjoint union of the five curves $\Delta_3, f_1, 
f'_1,S_3, S_4$, and
$$
K_Y + \mathcal{L}_3 \equiv L -E_1-E_2-E_3.
$$
Since
$$
(K_Y + 2 \Delta_3 +(L -E_1-E_2-E_3)) \Delta_3 = -2 < 0,
$$
by \cite{burniat3}, lemma 5.1, we have
\begin{multline}
H^0(Y, \Omega^1_Y(\log D_3)(K_Y+ \mathcal{L}_2)) \cong \\
\cong H^0(Y, \Omega^1_Y(\log f_1, \log f'_1, \log S_3, \log S_4))(K_Y 
+ \mathcal{L}_2 + \Delta_3)).
\end{multline}
Note that
$$
K_Y + \mathcal{L}_3 + \Delta_3 \equiv S_1 + S_2  + E_2.
$$

Again by lemma 5.1 in \cite{burniat3} we see that
\begin{multline}
H^0(Y, \Omega^1_Y(\log f_1, \log f'_1, \log S_3, \log S_4))(K_Y + 
\mathcal{L}_2 + \Delta_3)) \cong \\
\cong H^0(Y, \Omega^1_Y(\log f_1, \log f'_1, \log S_3, \log S_4, \log 
S_1, \log S_2)(E_2)).
\end{multline}
Since $E_2(f_1+f'_1+S_1 +S_2 + S_3 +S_4 + E_2) =  3$, we see by the 
same arguments as in case 1) that

\begin{multline}
h^0(Y, \Omega^1_Y(\log f_1, \log f'_1, \log S_3, \log S_4, \log S_1, 
\log S_2)(E_2)) \leq \\
\leq h^0(Y, \Omega^1_Y(\log f_1, \log f'_1, \log S_3, \log S_4, \log 
S_1, \log S_2, \log E_2)) + 2.
\end{multline}

\begin{claim}
The Chern classes of $f_1, S_1, \ldots , S_4, E_2$ are linearly 
independent.
\end{claim}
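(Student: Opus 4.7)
The plan is to mimic the strategy used in the earlier two claims of the proof: write down a general vanishing relation
\[
\lambda f_1 + a_1 S_1 + a_2 S_2 + a_3 S_3 + a_4 S_4 + \mu E_2 \equiv 0 \quad \text{in } \operatorname{Pic}(Y),
\]
with $\lambda, a_i, \mu \in \CC$, and eliminate the coefficients one by one by intersecting with carefully chosen test curves whose intersection profile with $\{f_1,S_1,\dots,S_4,E_2\}$ is sparse. The numerical classes I need are
\[
S_1 = L - E_1 - E_2 - E_5,\ S_2 = L - E_2 - E_3 - E_6,\ S_3 = L - E_3 - E_4 - E_5,\ S_4 = L - E_1 - E_4 - E_6,
\]
and $f_1 = 2L - E_2 - E_4 - E_5 - E_6$ (the strict transform of a conic through $P_2,P_4,P_5,P_6$).

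First I would intersect the relation with $S_3$. By construction $S_3$ is disjoint from $S_1, S_2, S_4$ (their common points have all been blown up), one checks $f_1 \cdot S_3 = 0$ and $E_2 \cdot S_3 = 0$, while $S_3^2 = -2$, so only $-2a_3$ survives and forces $a_3 = 0$. The same argument with $S_4$ gives $a_4 = 0$. Next I would intersect with the exceptional curve $E_1$: it meets only $S_1$ among the remaining classes (since $E_1\cdot f_1 = E_1 \cdot S_2 = E_1 \cdot E_2 = 0$, but $E_1\cdot S_1 = 1$), hence $a_1 = 0$; symmetrically $E_3$ yields $a_2 = 0$.

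We are left with $\lambda f_1 + \mu E_2 \equiv 0$. Intersecting with $E_4$ kills the $E_2$ term (as $E_2 \cdot E_4 = 0$) while $f_1 \cdot E_4 = 1$, forcing $\lambda = 0$; then $\mu E_2 = 0$ gives $\mu = 0$ because $E_2$ is nonzero in $\operatorname{Pic}(Y)$ (for instance $E_2^2 = -1 \neq 0$). This concludes the proof of linear independence, which is the last ingredient needed to bound $h^0(Y, \Omega^1_Y(\log D_3)(K_Y + \mathcal{L}_3)) \leq 3$ and hence to complete Proposition \ref{h1theta}.

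I do not expect any serious obstacle: the argument is entirely combinatorial once the divisor classes are written out. The only thing to be careful about is choosing the test curves in the correct order, so that each intersection isolates a single coefficient; the configuration of the complete quadrilateral, with the four sides mutually disjoint after blow-up and the exceptional divisors each touching a prescribed pair of sides, gives more than enough sparsity to carry this out.
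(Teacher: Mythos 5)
Your argument is correct and is essentially the paper's own proof: the same relation $\lambda f_1 + \sum a_i S_i + \mu E_2 \equiv 0$ is tested against $S_3, S_4$, then $E_1$, $E_3$, and finally $E_4$, in the same order and with the same conclusions. Writing out the numerical classes $S_1 = L - E_1 - E_2 - E_5$, etc., and $f_1 = 2L - E_2 - E_4 - E_5 - E_6$ only makes explicit the intersection numbers the paper uses implicitly, so there is nothing to add.
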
 

\begin{proof}[Proof of the claim.]

Assume that 
\begin{equation}\label{zero3}
\lambda f_1 + a_1 S_1 + a_2 S_2 + a_3 S_3 +a_4 S_4 + \mu E_2\equiv 0,
\end{equation}
where $\lambda, a_1, a_2, a_3,a_4, \mu \in \CC$.

Intersection with $S_3, S_4$ gives $a_3 = a_4 =0$, whereas intersection with $E_1$ yields then  $a_1=0$.
 Intersection with $E_3$ instead gives  $a_2 =0$.
Finally,  intersection with $E_4$ yields $\lambda =0$, whence also $\mu =0$. \end{proof}
Therefore
$$
h^0(Y, \Omega^1_Y(\log f_1, \log f'_1, \log S_3, \log S_4, \log S_1, 
\log S_2, \log E_2)) = 1,
$$
and we have shown 3).

\end{proof}

\begin{proof}[Proof of theorem \ref{main}]
(1) has been proved in theorem \ref{homotopy}.

(2) By \cite{inoue}, page 318, $K_S$ is ample and by corollary \ref{cortheta} the tangent space $H^1(S, \Theta_S)$ to the base $\Def(S)$ of the Kuranishi family of $S$ consists of the invariants for the action of the group $(\ZZ/2 \ZZ)^2$. Therefore all the local deformations of $S$ admit a $(\ZZ/2 \ZZ)^2$-action, hence are $(\ZZ/2 \ZZ)^2$ Galois coverings of the four nodal cubic surface (the anticanonical image of $Y$).

Furthermore, the dimension of $H^1(S, \Theta_S)$ is equal to the dimension of the Inoue  family containing $S$ in the moduli space $\mathfrak{M}^{can}_{1,7}$, whence the base of the Kuranishi family of $S$ is smooth.

Since the quotient of a smooth variety by a finite group (in our case, the automorphism group  $\Aut(S)$) is normal, it follows that the irreducible connected component of the moduli space $\mathfrak{M}^{can}_{1,7}$ corresponding to Inoue surfaces with $K_S^2=7, p_g = 0$ is normal and in particular generically smooth.

The family of Inoue surfaces is parametrized by a smooth (4-dimensional) rational variety (cf. e.g. remark \ref{ratvar}), whence  unirationality follows.
\end{proof}


\bigskip
\noindent {\bf Authors' Adresses:}

\noindent I.Bauer, F. Catanese \\ Lehrstuhl Mathematik VIII\\
Mathematisches Institut der Universit\"at Bayreuth\\ NW II\\
Universit\"atsstr. 30\\ 95447 Bayreuth

\begin{verbatim}
ingrid.bauer@uni-bayreuth.de,
fabrizio.catanese@uni-bayreuth.de
\end{verbatim}

\begin{thebibliography}{Grif-VW}
\bibitem[BC09a]{keumnaie} Bauer, I., Catanese, F., {\it The moduli
space of Keum-Naie surfaces}.
   Groups Geom. Dyn.  5  (2011),  no. 2, 231--250.

\bibitem[BC09b]{burniat1}
     Bauer, I., Catanese, F., {\it Burniat surfaces I: fundamental
groups and moduli of primary Burniat surfaces}.
   Faber, Carel (ed.) et al., Classification of algebraic varieties.
Based on the conference on classification of varieties,
Schiermonnikoog, Netherlands, May 2009. Z\"urich: European
Mathematical Society (EMS).
EMS Series of Congress Reports, 49--76 (2011).

\bibitem[BC10]{burniat3} Bauer, I., Catanese, F., {\it Burniat surfaces III: deformations of 
automorphisms and extended Burniat surfaces}. arXiv:1012.3770

\bibitem[BCP11]{survey} Bauer, I., Catanese, F.,  Pignatelli, R.
{\em Surfaces with geometric genus zero: a survey}.
Ebeling, Wolfgang (ed.) et al., Complex and differential geometry.
Conference held at Leibniz Universit\"at Hannover, Germany,
September 14--18, 2009. Proceedings. Berlin: Springer. Springer
Proceedings in Mathematics 8, 1-48 (2011).

\bibitem[BCGP09]{4names} Bauer, I., Catanese, F., Grunewald, F.,
Pignatelli, R. {\it Quotients  of a product of curves by a finite
    group and their fundamental groups.} arXiv:0809.3420, to appear in
Amer. J. Math.

\bibitem[BoHae61]{bh} Borel, A., Haefliger, A. {\it  La 
classe d'homologie fondamentale d'un espace analytique. } Bull. Soc. 
Math. France 89 1961 461--513.

\bibitem[Cam32]{Cam} 
Campedelli, L., {\it Sopra alcuni piani doppi 
notevoli con curve di 
diramazione del decimo ordine}. Atti
Acad. Naz. Lincei {\bf 15} 
(1932), 536--542.


\bibitem[Cas96]{Cast} G. Castelnuovo, {\em Sulle 
superficie di genere 
zero.}  Memorie della Soc.It. delle Scienze 
(detta dei XL),
ser. III, t. 10, 
(1896).


\bibitem[Cat84]{catjdg} Catanese, F. {\em On the moduli spaces of surfaces of general type}. 
J. Diff. Geom. 19, 2 (1984), 483--515.

\bibitem[Cat84b]{bucharest} Catanese, F. {\em  Commutative algebra methods and equations of regular surfaces. }
Algebraic geometry, Bucharest 1982 (Bucharest, 1982), 68--111, Lecture Notes in Math., 1056, Springer, Berlin, (1984).


\bibitem[Cat91]{albgentype} Catanese, F. { \it Moduli and 
classification of irregular
Kaehler manifolds (and algebraic 
varieties) with Albanese general type fibrations.} 
Invent. Math. 
104  (1991),  no. 2, 263--289.

\bibitem[Cat99]{sbc} Catanese, F. { \it Singular bidouble covers and 
the construction of interesting 
algebraic surfaces. } Algebraic
geometry: Hirzebruch 70 (Warsaw, 
1998),  97--120, Contemp. Math., 
241, Amer. Math. Soc., Providence, 
RI, 1999.

\bibitem[Cat00]{isogenous}Catanese,  F. { \it  Fibred 
surfaces, 
varieties isogenous to a product and related moduli 
spaces.}
Amer. J. Math.  122  (2000),  no. 1, 
1--44.

\bibitem[Cat03]{annals} Catanese, F. {\it 
Moduli spaces of 
surfaces and real structures. }
Ann. of Math. (2) 158 (2003), no. 2, 
577--592. 

\bibitem[Cat03b]{barlotti} Catanese, F. {\it Fibred 
K\"ahler and quasi-projective groups.}
Special issue dedicated to 
Adriano Barlotti.  Adv. Geom.  2003,  suppl., S13--S27. 


\bibitem[Cat08]{cime2} Catanese, F. {\it Differentiable and 
deformation type of algebraic  surfaces, real and 
symplectic
structures.} Symplectic 4-manifolds and  algebraic 
surfaces, 55--167, 
Lecture Notes in Math., 1938, Springer,  Berlin, 
2008.

\bibitem[ChCou10]{coughlanchan}
Chan Mario T.,  Coughlan 
S.,  {\it  Kulikov surfaces form a 
connected component of the 
moduli space},
arXiv:1011.5574, to appear on Nagoya Math. Journal. 

\bibitem[Cor95]{alessio}
Corti, A.,
 {\it Factoring birational maps of threefolds after Sarkisov.}
 J. Algebraic Geom. 4 (1995), no. 2, 223--254.

\bibitem[Dolga81]{Dolgachev} Dolgachev, I. {\it  Algebraic 
surfaces 
with $q=p_g=0$}.
Algebraic surfaces,  97-215, C.I.M.E. 
Summer
School 1977, 76, Liguori Editore, Napoli (1981), reedited by 
Springer, Heidelberg, (2010).

\bibitem[Enr96]{enr96} Enriques, F., 
{\em Introduzione alla geometria 
sopra le superficie algebriche}. 
Memorie della Societa'
Italiana delle Scienze (detta "dei XL"), s.3, 
to. X, (1896), 1--81.

\bibitem[Eells-Sam64]{es}
Eells, J., Sampson, J.H. 
{\em  Harmonic maps of Riemannian manifolds},
Amer. Jour. Math. 86 (1964), 109--160.

\bibitem[EnrMS]{enrMS} Enriques, F., {\em 
Memorie scelte di 
geometria, vol. I, II, III}. Zanichelli, Bologna, 
1956, 541 pp., 1959,
527 pp., 1966, 456 pp. .

\bibitem[EV92]{esnaultviehweg}  Esnault, H.,  Viehweg, E., {\em Lectures on Vanishing Theorems}.
DMV Seminar, Band 20, Birkh\"auser Verlag Basel, 1992 

\bibitem[God35]{god} 
Godeaux, L., {\em  Les involutions cycliques 
appartenant \`a une 
surface alg\'ebrique}. Actual. Sci. Ind.,
{\bf  270}, Hermann, Paris, 
1935.

\bibitem[Gie77]{gieseker} Gieseker, D. {\it Global moduli for 
surfaces of general type.}  Invent. Math.  43  (1977), no. 3, 
233--282.

\bibitem[In94]{inoue} Inoue, M.  {\em Some new 
surfaces of general 
type.} Tokyo J. Math. 17 (1994), no. 2, 
295--319.

\bibitem[Ke88]{keum} Keum, Y.H. {\it Some new surfaces of 
general 
type with $p_g = 0$.}  Unpublished manuscript 
(1988).

\bibitem[K-M]{KollarMori}
 Koll\'ar, J., Mori, S. {\it  Birational geometry of algebraic varieties. }
 With the collaboration of C. H. Clemens and A. Corti. 
 Cambridge Tracts in Mathematics, 134. Cambridge University Press, Cambridge, (1998) viii+254 pp.

\bibitem[LanBir]{LangeBirkenhake} Lange, H. ; Birkenhake, C. 
{\it 
Complex abelian varieties. }
Grundlehren der
Mathematischen 
Wissenschaften,  302. Springer-Verlag, Berlin, (1992), 
viii+435 
pp.;
  second edition (2004)  xii+635 pp.


\bibitem[ML-P01]{mlp} Mendes 
Lopes, M.; Pardini, R. {\it The 
bicanonical map of surfaces with 
$p_g=0$ and $K^2\geq 7$.} Bull.
London Math. Soc. 33 (2001), no. 3, 
265--274. 

\bibitem[Nai94]{naie94} Naie, D. {\it Surfaces 
d'Enriques et une 
construction de surfaces de type g\'en\'eral avec 
$p\sb g=0$}.
Math. Z.  215  (1994),  no. 2, 269--280.

\bibitem[Siu80]{siu1} Siu, Y. T. {\it
The complex-analyticity of harmonic maps and the strong rigidity of 
compact K\"ahler
manifolds.} Ann. of Math. (2) 112 (1980), no. 1, 73--111.

\bibitem[Siu81]{siu2} Siu, Y. T. {\it Strong rigidity of compact 
quotients of exceptional
bounded symmetric domains.}  Duke Math. J.  48  (1981),  no. 4, 857--871.



\end{thebibliography}
\end{document}